\documentclass[11pt]{amsart}
\usepackage{amssymb,amsmath,amsthm,amsfonts} 
\usepackage{enumerate}
\usepackage {latexsym}
\usepackage{graphicx}
\usepackage{epsfig}
\usepackage{bbm}
\include{srctex}
\usepackage{marginnote}
\usepackage[letterpaper, top=1.2in, bottom=1.2in, outer=1.1in, inner=1.1in, heightrounded, marginparwidth=2.5cm, marginparsep=2mm]{geometry}
  
\linespread{1.3} 
\allowdisplaybreaks
\sloppy  
\hyphenation{di-men-sion-al}

\newcommand{\nc}{\newcommand}
\nc{\les}{\lesssim}
\nc{\nit}{\noindent}
\nc{\nn}{\nonumber}
\nc{\D}{\partial}
\nc{\diff}[2]{\frac{d #1}{d #2}}
\nc{\diffn}[3]{\frac{d^{#3} #1}{d {#2}^{#3}}}
\nc{\pdiff}[2]{\frac{\partial #1}{\partial #2}}
\nc{\pdiffn}[3]{\frac{\partial^{#3} #1}{\partial{#2}^{#3}}}
\nc{\abs}[1] {\lvert #1 \rvert}
\nc{\cAc}{{\cal A}_c}
\nc{\cE}{{\cal E}}
\nc{\cF}{{\cal F}}
\nc{\cP}{{\cal P}}
\nc{\cV}{{\cal V}}
\nc{\cQ}{{\cal Q}}
\nc{\cGin}{{\cal G}_{\rm in}}
\nc{\cGout}{{\cal G}_{\rm out}}
\nc{\cO}{{\cal O}}
\nc{\Lav}{{\cal L}_{\rm av}}
\nc{\cL}{{\cal L}}
\nc{\cB}{{\cal B}}
\nc{\cZ}{{\cal Z}}
\nc{\mR}{{\mathcal R}}
\nc{\mG}{{\mathcal G}}
\nc{\cT}{{\cal T}}
\nc{\cY}{{\cal Y}}
\nc{\cX}{{\cal X}}
\nc{\cXT}{{{\cal X}(T)}}
\nc{\cBT}{{{\cal B}(T)}}
\nc{\vD}{{\vec \mathcal{D}}}
\nc{\efield}{\mathcal{E}}
\nc{\vE}{{\vec \efield}}
\nc{\vB}{{\vec \mathcal{B}}}
\nc{\vH}{{\vec \mathcal{H}}}
\nc{\F}{  \mathcal{F} }
\nc{\ty}{{\tilde y}}
\nc{\tu}{{\tilde u}}
\nc{\tV}{{\tilde V}}
\nc{\Pc}{{\bf P_c}}
\nc{\bx}{{\bf x}}
\nc{\bX}{{\bf X}}
\nc{\bXYZ}{{\bf XYZ}}
\nc{\bY}{{\bf Y}}
\nc{\bF}{{\bf F}}
\nc{\bS}{{\bf S}}
\nc{\dV}{{\delta V}}
\nc{\dE}{{\delta E}}
\nc{\TT}{{\Theta}}
\nc{\dPsi}{{\delta\Psi}}
\nc{\order}{{\cal O}}
\nc{\Rout}{R_{\rm out}}
\nc{\eplus}{e_+}
\nc{\eminus}{e_-}
\nc{\epm}{e_\pm}
\nc{\eps}{\varepsilon}
\nc{\vnabla}{{\vec\nabla}}
\nc{\G}{\Gamma}
\nc{\w}{\omega}
\nc{\mh}{h}
\nc{\mg}{g}
\nc{\vphi}{\varphi}
\nc{\tlambda}{\tilde\lambda}
\nc{\be}{\begin{equation}}
\nc{\ee}{\end{equation}}
\nc{\ba}{\begin{eqnarray}}
\nc{\ea}{\end{eqnarray}}

\nc{\g}{\gamma}
\nc{\ol}{\overline}

\newtheorem{theorem}{Theorem}[section]
\newtheorem{lemma}[theorem]{Lemma}
\newtheorem{prop}[theorem]{Proposition}
\newtheorem{corollary}[theorem]{Corollary}
\newtheorem{defin}[theorem]{Definition}
\newtheorem{rmk}[theorem]{Remark}

\nc{\pT}{\partial_T}
\nc{\pz}{\partial_z}
\nc{\pt}{\partial_t}
\nc{\la}{\langle}
\nc{\ra}{\rangle}
\nc{\infint}{\int_{-\infty}^{\infty}}
\nc{\halfwidth}{6.5cm}
\nc{\figwidth}{10cm}
\newcommand{\f}{\frac}

\nc{\nlayers}{L} \nc{\nsectors}{M}
\nc{\indicator}{\mathbf{1}}
\nc{\Rhole}{R_{\rm hole}}
\nc{\Rring}{R_{\rm ring}}
\nc{\neff}{n_{\rm eff}}
\nc{\Frem}{F_{\rm rem}}
\nc{\R}{\mathbb R}
\nc{\C}{\mathbb C}
\nc{\Z}{\mathbb Z}
\nc{\DD}{\Delta}
\nc{\cD}{\mathcal D}
\nc{\lnorm}{\left\|}
\nc{\rnorm}{\right\|}
\nc{\rnormp}{\right\|_{\ell^{p,\eps}}}
\nc{\rar}{\rightarrow}
\sloppy

\begin{document}

\begin{abstract}

	We investigate $L^1\to L^\infty$ dispersive estimates for the three dimensional Dirac  equation with a potential. We also classify the structure of obstructions at the thresholds of the essential spectrum as being composed of a two dimensional space of resonances and finitely many eigenfunctions.  We show that, as in the case of the Schr\"odinger evolution, the presence of a threshold obstruction generically leads to a loss of the natural $t^{-\f32}$ decay rate.  In this case we show that the solution operator is composed of a finite rank operator that decays at the rate   $t^{-\f12}$ plus a term that decays at the  rate  $t^{-\f32}$.
	
\end{abstract}

\title[Dispersive estimates for Dirac Operators]{\textit{Dispersive estimates for Dirac Operators in dimension three with 
obstructions at threshold energies}}

\author[Erdogan, Green, Toprak]{M. Burak Erdo\u{g}an, William~R. Green, and Ebru Toprak}
\thanks{The first and third authors are partially supported by NSF grant DMS-1501041.  The second author is supported by  Simons  Foundation  Grant  511825. }
\address{Department of Mathematics \\
University of Illinois \\
1409 W. Green Street\\
Urbana, IL 61801, U.S.A.}
\email{berdogan@math.uiuc.edu}
\address{Department of Mathematics\\
Rose-Hulman Institute of Technology \\
5500 Wabash Ave.\\
Terre Haute, IN 47803, U.S.A.}
\email{green@rose-hulman.edu}
\address{Department of Mathematics \\
University of Illinois \\
1409 W. Green Street\\
Urbana, IL 61801, U.S.A.}
\email{toprak2@illinois.edu}

\maketitle

\section{Introduction}

We consider the linear Dirac equations in three spatial dimensions with potential,
\begin{align}\label{dirac}
i\partial_t \psi(x,t) = (D_m +V(x)) \psi(x,t), \,\,\,\, \psi(x,0)= \psi_0(x).
\end{align} 
Here $x\in \R^3$ and $\psi(x,t) \in \mathbb{C}^{4}$. The $n$-dimensional free Dirac operator $D_m$ is
 defined by
\begin{align}
   D_m= -i \alpha \cdot \nabla+ m\beta = -i \sum_{k=1}^{n} \alpha_k \partial_k + m\beta,
\end{align}
where $m>0$ is a constant, and(with $N=\lfloor \frac{n+1}{2}\rfloor$, the  $N\times N$ Hermitian matrices $\alpha_j$ satisfy 
\begin{align} \label{matrixeq}
	\left\{\begin{array}{ll}
		\alpha_j \alpha_k+\alpha_k\alpha_j =2\delta_{jk}
		\mathbbm 1_{\mathbb C^{2^{N}}}
		& j,k \in\{1,2,\dots, n\}\\
		\alpha_j \beta+\beta \alpha_j= 
		\mathbbm O_{\mathbb C^{2^{N}}}\\
		\beta^2 = \mathbbm 1_{\mathbb C^{2^{N}}}
		\end{array}
	\right.
\end{align}
Physically, $m$ represents the mass of the quantum particle.  If $m=0$ the particle is massless and if $m>0$ the particle is massive.  We note that dimensions $n=2,3$ are of particular physical importance.
In dimension three we use
\begin{align*}
	\beta=\left[\begin{array}{cc} I_{\mathbb{C}^2} & 0\\ 0 & -I_{\mathbb{C}^2}
	\end{array}\right], \ \alpha_i=\left[\begin{array}{cc} 0 & \sigma_i \\ \sigma_i & 0
	\end{array}\right],
	\end{align*}
	\begin{align*}
	\sigma_1=\left[\begin{array}{cc} 0 & -i\\ i & 0
	\end{array}\right],\
	\sigma_2=\left[\begin{array}{cc} 0 & 1\\ 1 & 0
	\end{array}\right],\
	 \sigma_3=\left[\begin{array}{cc} 1 & 0\\ 0 & -1
	\end{array}\right].
\end{align*}
The Dirac equations \eqref{dirac} were derived   by Dirac as an attempt to tie together the theories of relativity and quantum mechanics to describe quantum particles moving at relativistic speeds.  The relativistic notion of energy, $E=\sqrt{c^2p^2+m^2c^2}$, depends on the particle's mass, momentum and the speed of light.  By combining this with the quantum mechanical notions of energy and momentum $E=i\hbar \partial_t$, $p=-i\hbar \nabla$ one arrives at a non-local equation
\be\label{eq:nonlocal}
	i\hbar \psi_t(x,t)=\sqrt{-c^2\hbar^2 \Delta+m^2c^4 }\, \psi(x,t).
\ee
We note that this is formally the square root of a Klein-Gordon equation. 
Dirac's insight was to linearize this equation into a system of four first order equations.  This linearization leads to the free Dirac equation, \eqref{dirac} with $V\equiv 0$, which describes the evolution of a system of spin up and spin down free electrons and positrons at relativistic speeds.  This systemization allows for the study of a first-order evolution equation, in agreement with a quantum mechanical viewpoint.  In addition, the linearization allows for the incorporation of external electric or magnetic fields in a relativistically invariant manner, which \eqref{eq:nonlocal} or a Klein-Gordon equation cannot.  Another benefit of this system is to account for the spin of the quantum particles.  This interpretation is not without its drawbacks, we refer the reader to the excellent text \cite{Thaller} for a more detailed introduction.

 The linearization, \eqref{dirac}, retains an important property of \eqref{eq:nonlocal} in that the free Dirac operator squared generates a diagonal system of Klein-Gordon equations.
This motivates the following
relationship, which follows from the relationships in \eqref{matrixeq},
\be\label{eq:Dmpm} (D_m -\lambda) (D_m+ \lambda) = (-i \alpha \cdot \nabla + m\beta - \lambda I) (-i \alpha \cdot \nabla + m\beta + \lambda I) = -\Delta + m^2 -\lambda^2 .
\ee
Here the last line is to be interpreted as a diagonal $ 4\times4$ matrix operator. This allows us to
formally define the free Dirac resolvent operator $\mathcal{R}_0(\lambda) = (D_m -\lambda)^{-1}$  in terms of the
free resolvent $R_0(\lambda)= (-\Delta-\lambda)^{-1}$ of the Schr\"odinger operator. That is,
\begin{align}\label{eq:RDR}
\mathcal{R}_0(\lambda) = (D_m+\lambda) R_0(\lambda^2- m^2).
\end{align}

Throughout the paper, we use the notation $X$ to describe a Banach space $X$ and the Banach spaces of $\C^4$ valued functions with components in $X$.
Let $ H^{1}(\R^3) $ be  the first order Sobolev space of the $\C^4$-valued functions,  $f(x) = (f_i(x))_{i=1}^4$, of the spatial variable $x=(x_1,x_2,x_3)$. Then, the free Dirac operator is essentially self-adjoint on $ H^{1}(\R^3) $, its spectrum is purely absolutely continuous and equal to $\sigma_{ess}(D_m)=\sigma_{ac}(D_m)= (-\infty, -m] \cup [m, \infty) $, \cite[Theorem 1.1]{Thaller}.  
Under mild assumptions on $V$, $H:=D_m+V$ is   self-adjoint, and  $\sigma_{ess}(H)= (-\infty, -m] \cup [m, \infty)$, \cite[Theorem 4.7]{Thaller}.

In this paper we aim to study the dispersive bounds by considering the formal solution operator $e^{-itH}$ as an element of the functional calculus via the Stone's formula:
\be\label{eq:Stone}
	e^{-itH}P_{ac}(H)f(x)=\frac{1}{2\pi i}\int_{(- \infty,-m]\cup[m,\infty)} e^{-it\lambda}\big[\mathcal R_V^+-\mathcal R_V^-\big](\lambda)f(x)\, d\lambda,
\ee  
where the perturbed resolvents are defined by $\mathcal R_V^\pm(\lambda)=\lim_{\epsilon \rightarrow 0^+} (D_m+V-(\lambda \pm i\epsilon))^{-1}$. These resolvent operators are well defined as operators between weighted $L^2(\R^3)$ spaces, \cite{agmon,BaHe}. In particular,  
 in \cite[Remark~1.1 and Theorem~3.9]{BaHe},   it was shown that this limit is well-defined as an operator from $ H^{0,s}(\R^3) $ to $ H^{1,-s}(\R^3) $   for any $\lambda \in (-\infty,-m) \cup (m,\infty)\ \setminus \sigma_p(H)$ and $s>\f12$  for  a class of potentials  including those which  we consider in Theorems~\ref{thm:main1} and~\ref{thm:main2}.  Furthermore, for the class of potentials we consider, there are no embedded eigenvalues in the essential spectrum, except
possibly at the thresholds $\lambda=\pm m$, \cite{Thomp}.  See also \cite{Roze,BG1,Vog,GM,BC1}. 

It is known that the Dirac operators can have infinitely many eigenvalues in the spectral gap, see for example \cite{Thaller}.  However, the work of Cojuhari \cite[Theorem 2.1]{Co} guarantees only finitely many eigenvalues in the spectral gap for the class of potentials we consider; also see Kurbenin \cite{Kurb}.  In fact, this result may be obtained as a corollary of our resolvent expansions as in \cite[Remark 4.7]{egd}.

To discuss our main results, we briefly discuss the notion of threshold resonances and eigenvalues.  We characterize both in terms of distributional solutions to the equation
$$
	H\psi= m \psi.
$$
If $\psi \in  L^2(\R^3) $, we say that there is a threshold eigenvalue at $\lambda= m$.  If $\psi \notin  L^2(\R^3) $, but $\la x\ra^{-\f12-\epsilon} \psi \in  L^2(\R^3) $ for all $\epsilon>0$, we say that there is a threshold resonance at $\lambda= m$.  An analagous characterization holds at the threshold $\lambda=-m$.  We provide a detailed characterization of the threshold in Section~\ref{sec:esa}.  If there is neither a threshold resonance or eigenvalue, we say that the threshold is regular.

We take $\chi\in C_c^\infty(\R)$ to be a smooth, even cut-off function of a small neighborhood of the threshold.  That is, $\chi(\lambda)=1$ if $|\lambda-m|<\lambda_0$ for a sufficiently small constant $\lambda_0>0$, and $\chi(\lambda)=1$ if $|\lambda-m|>2\lambda_0$.  
For the duration of the paper, we employ the following notation.   We write $|V(x)|\les \la x\ra^{-\beta}$ to indicate that each component of the matrix $V$ satisfies the bound $|V_{ij}(x)|\les \la x\ra^{-\beta}$.
Our main results are the following low-energy dispersive bounds.

\begin{theorem}\label{thm:main1}

	Assume that   $V$ is a Hermitian matrix for which $|V(x)|\les \la x\ra^{-\beta}$ for some $\beta>7$.  Further, assume that there is a threshold resonance but not an eigenvalue.  Then, there is a time dependent  operator $K_t$, with  rank at most two and satisfying $\sup_t \|K_t\|_{L^1\to L^\infty}\les 1$, such that
	$$
		\left\| e^{-itH}P_{ac}(H)\chi(H)-\la t\ra^{-\f12} K_t 
		\right\|_{L^1\to L^\infty} \les   \la t\ra^{-\f32}.
	$$
\end{theorem}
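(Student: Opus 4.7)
The plan is to start from Stone's formula \eqref{eq:Stone} restricted to the threshold window by the cutoff $\chi$. Since the essential spectrum has two thresholds and the analysis near $\lambda=-m$ is analogous, I would focus on $\lambda$ near $m$ and pass to the Schr\"odinger-type variable $z=\sqrt{\lambda^2-m^2}$, using \eqref{eq:RDR} to reduce the free Dirac resolvent to $(D_m+\lambda)R_0(z^2)$. Factoring $V=v^*Uv$ with $v\sim |V|^{1/2}$, the symmetric resolvent identity gives
\[
\mathcal R_V^\pm(\lambda)=\mathcal R_0^\pm(\lambda)-\mathcal R_0^\pm(\lambda)\,v^*\,\bigl(M^\pm(\lambda)\bigr)^{-1}\,v\,\mathcal R_0^\pm(\lambda),\qquad M^\pm(\lambda):=U+v\mathcal R_0^\pm(\lambda)v^*.
\]
With this, the Stone integral splits into a free piece, which by standard stationary phase in three dimensions decays at $\langle t\rangle^{-3/2}$, and a perturbative piece that carries all of the threshold obstruction.

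The central step is a low-energy Laurent expansion of $(M^\pm(\lambda))^{-1}$ in $z$ around $z=0$. Using the expansion of $R_0(z^2)$ and the identity \eqref{eq:RDR}, one writes $M^\pm(\lambda)=M_0+zM_1^\pm+z^2M_2+\cdots$ on the weighted $L^2$ range of $v$. A Jensen--Nenciu-type inversion, iterated until an invertible block appears, controls the singular part of $(M^\pm)^{-1}$. Under the hypothesis that the threshold has resonances but no eigenvalue, this procedure isolates a leading contribution of the form
\[
(M^\pm(\lambda))^{-1}=\frac{1}{z}\, S+\Gamma^\pm(z),
\]
where $S$ is a finite-rank operator whose range is naturally identified with (at most) the two-dimensional space of threshold resonances guaranteed by the classification in Section~\ref{sec:esa}, and $\Gamma^\pm(z)$ is a remainder that admits the requisite number of derivatives in $z$. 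The hypothesis $\beta>7$ is exactly what is needed to justify enough derivatives of $R_0(z^2)$ (hence of $M^\pm$) in the weighted spaces used for the $L^1\to L^\infty$ bound via Fubini.

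Plugging the expansion back into Stone's formula yields two types of oscillatory integrals. The singular term $z^{-1}S$ produces, after sandwiching between $\mathcal R_0^\pm v^*$ and $v\mathcal R_0^\pm$ and using $[R_0^+-R_0^-](z^2)=\frac{z}{2\pi i}\,(\text{spherical average})$, an integral of the form
\[
\frac{1}{2\pi i}\int_0^{\lambda_0^*} e^{-it\sqrt{z^2+m^2}}\,\widetilde\chi(z)\,\bigl(\text{finite-rank kernel in }x,y\bigr)\,dz,
\]
whose leading behavior I extract by stationary phase at $z=0$ (the phase $\sqrt{z^2+m^2}$ has a nondegenerate critical point there with value $m$); this gives the operator $\langle t\rangle^{-1/2}K_t$, with $K_t$ of rank at most two, bounded uniformly on $L^1\to L^\infty$ by pointwise inspection of its kernel. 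The remainder involving $\Gamma^\pm(z)$ gets $\langle t\rangle^{-3/2}$ by one or two integrations by parts in $z$ together with the van der Corput lemma applied to the phase $e^{-it\sqrt{z^2+m^2}}$. The main obstacle is the careful algebra of the Jensen--Nenciu inversion in the Dirac setting: unlike Schr\"odinger, the matrix-valued projection onto the resonance subspace must be computed so as to show that the rank of the resulting $K_t$ matches the dimension of the resonance space and that no hidden eigenfunction contributions appear (this is where the assumption of no threshold eigenvalue is used), and that $\Gamma^\pm(z)$ is sufficiently differentiable to run the stationary-phase/van der Corput estimates uniformly in $x,y$.
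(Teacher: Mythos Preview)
Your overall strategy matches the paper's---Stone's formula, symmetric resolvent identity, Jensen--Nenciu inversion yielding a $z^{-1}$ pole carried by a rank-at-most-two projection, then stationary phase---but there is one concrete gap. In the identity
\[
\mathcal R_V^\pm=\mathcal R_0^\pm-\mathcal R_0^\pm v^*(M^\pm)^{-1}v\,\mathcal R_0^\pm
\]
you have only a single free resolvent on each side of the $L^2$-bounded operator $(M^\pm)^{-1}$. In three dimensions the Dirac free resolvent kernel has a local singularity of size $|x-y|^{-2}$ (from $-i\alpha\cdot\nabla G_0$; see \eqref{eq:G0 def}), so $v\,\mathcal R_0^\pm$ does \emph{not} map $L^1(\R^3)$ into $L^2(\R^3)$, and you cannot pass from $L^2\to L^2$ control of $(M^\pm)^{-1}$ to an $L^1\to L^\infty$ bound on the composition. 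The paper addresses this explicitly (see the paragraph preceding \eqref{eq:pert2} and Remark~\ref{rmk:potential}): one must iterate once more to obtain
\[
\mathcal R_V^\pm=\mathcal R_0^\pm-\mathcal R_0^\pm V\mathcal R_0^\pm+\mathcal R_0^\pm V\mathcal R_0^\pm V\mathcal R_0^\pm-\mathcal R_0^\pm V\mathcal R_0^\pm v^*(A^\pm)^{-1}v\,\mathcal R_0^\pm V\mathcal R_0^\pm,
\]
so that the iterated block $\mathcal R_0^\pm V\mathcal R_0^\pm$, which \emph{does} map $L^1$ into weighted $L^2$, flanks the inverse. The cost is that the three Born-series terms must be bounded separately; this is Proposition~\ref{lem:born} and is not ``standard stationary phase'' for the free evolution alone.

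A secondary point: for the singular $z^{-1}S$ contribution the paper does not use the resolvent difference $[R_0^+-R_0^-]$ as you suggest. Instead it extends the $z$-integral to all of $\R$ via $\mathcal R_0^-(z)=\mathcal R_0^+(-z)$, then peels off the $z$-independent leading spatial kernel of the four flanking resolvents (the $F(x,y)/|x-y|$ terms in \eqref{eq:HF}--\eqref{eq:Sint}), reducing the problem to a scalar stationary-phase integral $\int e^{-it\sqrt{z^2+m^2}+iz\theta}\chi(z)(z^2+m^2)^{-1/2}\,dz$ analyzed in Lemma~\ref{lem:finite}. This is how the explicit form of $K_t$ (and of $P_r$) in Proposition~\ref{prop:res1} emerges; your sketch does not quite produce a kernel whose $z$-dependence has been isolated enough to read off the rank-two structure.
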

In fact,  the operator $  K_t$ in the statement can be written as  $K_t= e^{-imt}P_r+\widetilde K_t$ where $P_r$ is a map onto the   threshold resonance space (see Proposition~\ref{prop:res1} below) and $\widetilde K_t$ is a finite rank operator satisfying the family of weighted bounds $\|\la x\ra^{-j}\widetilde{K}_t(x,y) \la y\ra^{-j}\|_{L^{1 }\to L^\infty } \les \la t\ra^{ -j}$ for any $0\leq j\leq 1$.

\begin{theorem}\label{thm:main2}

	Assume that  $V$ is a Hermitian matrix for which $|V(x)|\les \la x\ra^{-\beta}$ for some $\beta>11$.  Further, assume that there is a threshold eigenvalue, then, there is a time dependent, finite rank  operator $K_t$ satisfying $\sup_t \|K_t\|_{L^1\to L^\infty}\les 1$, such that
	$$
		\left\| e^{-itH}P_{ac}(H)\chi(H)-\la t\ra^{-\f12} K_t 
		\right\|_{L^1\to L^\infty} \les \la t\ra^{-\f32}.
	$$
\end{theorem}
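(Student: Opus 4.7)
The plan is to expand the perturbed resolvent around the threshold $\lambda=m$ (the other endpoint $\lambda=-m$ is handled symmetrically by the spectral symmetry of $D_m+V$) and then substitute into Stone's formula \eqref{eq:Stone}. Factor the potential as $V=v^* U v$ with $|v(x)|\les \la x\ra^{-\beta/2}$ and $U$ a bounded matrix, and form the Birman--Schwinger-type operator $M^{\pm}(\lambda):= U + v\,\mathcal{R}_0^{\pm}(\lambda)\, v^*$. The symmetric resolvent identity
$$
\mathcal{R}_V^{\pm}(\lambda) = \mathcal{R}_0^{\pm}(\lambda) - \mathcal{R}_0^{\pm}(\lambda)\, v^*\, (M^{\pm}(\lambda))^{-1}\, v\, \mathcal{R}_0^{\pm}(\lambda)
$$
reduces matters to analyzing $(M^{\pm}(\lambda))^{-1}$, since the pure free contribution $\mathcal{R}_0^+ - \mathcal{R}_0^-$ plugged into \eqref{eq:Stone} gives the free Dirac evolution, for which the full $t^{-3/2}$ bound is classical.

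Next, using \eqref{eq:RDR} together with the well-known low-energy expansion of the Schr\"odinger free resolvent $R_0(z^2)$ in $z:=\sqrt{\lambda^2-m^2}$, I would obtain a Taylor-type expansion $M^{\pm}(\lambda) = T_0 + zT_1^{\pm} + z^2 T_2 + z^3 T_3^{\pm}+\cdots$, with operator-valued remainders controlled in appropriate weighted norms (the hypothesis $\beta>11$ is what makes enough of these coefficients bounded on the relevant spaces). The threshold eigenvalue manifests itself as $T_0$ having non-trivial kernel, and invertibility of $M^{\pm}(\lambda)$ for small $z\neq 0$ is recovered by iterating the Jensen--Nenciu inversion lemma through a chain of orthogonal projections $S_1\supseteq S_2\supseteq\cdots$ onto the successive kernels. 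This yields a Laurent-type expansion
$$
(M^{\pm}(\lambda))^{-1} = z^{-2}\Gamma_{-2} + z^{-1}\Gamma_{-1}^{\pm} + \Gamma_0^{\pm} + z\,\Gamma_1^{\pm}+\cdots,
$$
whose singular coefficients are finite-rank and expressible algebraically in terms of the projections onto the threshold eigenspace and (if present) resonance space from the classification in Section~\ref{sec:esa}.

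Inserting the singular part of $(M^{\pm})^{-1}$ back into the symmetric resolvent identity and then into \eqref{eq:Stone} produces a finite-rank operator sandwiched between $\mathcal{R}_0^{\pm}(\lambda)v^*$ and $v\mathcal{R}_0^{\pm}(\lambda)$. After the change of variables $z=\sqrt{\lambda^2-m^2}$, the leading $z^{-2}\Gamma_{-2}$ piece contributes a one-dimensional oscillatory integral with phase $e^{-it\lambda}$; standard stationary-phase analysis at the endpoint $\lambda=m$ extracts a factor $e^{-imt}\la t\ra^{-\f12}$ times a finite-rank kernel, which is precisely the main part of $K_t$. The $z^{-1}\Gamma_{-1}^{\pm}$ contribution, together with the $\pm$-cancellation between $(M^+)^{-1}$ and $(M^-)^{-1}$, either gets absorbed into $K_t$ or already produces $\la t\ra^{-\f32}$ decay upon one integration-by-parts. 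The regular part $\Gamma_0^{\pm}+z\Gamma_1^{\pm}+\cdots$ combined with the higher-order expansion of $\mathcal{R}_0^{\pm}$ yields, after two integrations-by-parts in $\lambda$, the $\la t\ra^{-\f32}$ remainder; the $v,v^*$ factors are used to absorb the polynomial growth in $x,y$ so that these bounds can be upgraded from weighted $L^2$ to pointwise $L^1\to L^\infty$.

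The main obstacle is the algebraic bookkeeping in the Jensen--Nenciu iteration when an eigenvalue and a possible resonance coexist: one must carefully identify the ranks and explicit forms of $\Gamma_{-2},\Gamma_{-1}^{\pm}$, and demonstrate the exact cancellations between the $(M^{+})^{-1}$ and $(M^-)^{-1}$ expansions so that the finite-rank operator extracted from Stone's formula is precisely the $K_t$ in the statement, with no leftover terms that fail to decay at the $t^{-\f12}$ rate. A secondary but substantial technical obstacle is the passage from weighted $L^2$ resolvent estimates to kernel-level $L^1\to L^\infty$ bounds on the remainder; this is where $\beta>11$ (rather than the $\beta>7$ sufficient for Theorem~\ref{thm:main1}) is essentially forced, reflecting the fact that the eigenvalue generates a stronger $z^{-2}$ singularity in $(M^{\pm})^{-1}$, which in turn requires deeper expansions of both $\mathcal{R}_0^{\pm}$ and $M^{\pm}$ with more integration-by-parts to achieve the claimed $\la t\ra^{-\f32}$ decay.
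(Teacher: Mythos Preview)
Your overall strategy---symmetric resolvent identity, Jensen--Nenciu inversion of $M^\pm=A^\pm$ to produce a Laurent expansion with finite-rank singular coefficients, then oscillatory-integral analysis---is exactly the paper's route. However, two points in your outline would not go through as written.

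First, the identity $\mathcal R_V^\pm=\mathcal R_0^\pm-\mathcal R_0^\pm v^*(M^\pm)^{-1}v\mathcal R_0^\pm$ is not sufficient for $L^1\to L^\infty$ bounds. The leading singularity of $\mathcal R_0^\pm(\lambda)(x,y)$ is $|x-y|^{-2}$ (see \eqref{eq:G0 def}), so $\mathcal R_0^\pm v^*$ does \emph{not} map $L^2$ to $L^\infty$ no matter how much decay $v$ carries: the local integral $\int |x-y|^{-4}\la y\ra^{-\beta}\,dy$ diverges. The paper fixes this (Remark~\ref{rmk:potential} and \eqref{eq:pert2}) by iterating once more so that there are \emph{two} copies of $\mathcal R_0^\pm V$ on each side of $(A^\pm)^{-1}$; Lemma~\ref{lem:potential} then shows $\mathcal R_0 V\mathcal R_0$ maps $L^{2,-\sigma}\to L^\infty$. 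This iteration also generates the Born-series terms of Proposition~\ref{lem:born}, which must be bounded separately (not only the free term).

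Second, your treatment of the $z^{-2}\Gamma_{-2}$ contribution as ``standard stationary phase extracting $e^{-imt}\la t\ra^{-1/2}$'' misses the mechanism that actually makes it work. After multiplying by the Jacobian $z/\sqrt{z^2+m^2}$ you still have a $z^{-1}$ singularity at the origin. The paper (proof of Proposition~\ref{prop:res2}) exploits two structural facts from Section~\ref{sec:esa}: the orthogonality $S_2v\mathcal G_1=0$ (Corollary~\ref{cor:S2 orth}) lets one replace the inner resolvents by $\mathcal R_0-\mathcal G_0-iz\mathcal G_1=O(z^2)$, and the identification $\mathcal G_0 v^* S_2 D_3 S_2 v\mathcal G_0=-2mP_m$ (Lemma~\ref{lem:eproj}) together with $\mathcal G_0 V P_m=-P_m$ reduces the most singular remaining piece to the decomposition \eqref{eq:RVGPm}. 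The pure $P_m$ term then vanishes as an odd principal-value integral after extending to $z\in\R$; the cross terms $[\mathcal R_0-\mathcal G_0]VP_m$ produce the finite-rank $t^{-1/2}$ operator, and the double-difference term gives $t^{-3/2}$ (Lemmas~\ref{lem:R0-G0} and~\ref{lem:Mbounds}, which again use the orthogonality to subtract a further $N(z,\la x\ra)$ term). None of this is ``stationary phase at the endpoint''; it is a chain of algebraic cancellations specific to the eigenvalue structure, and it is precisely what forces the longer expansions requiring $\beta>11$.
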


This theorem is valid regardless of the existence or non-existence of threshold resonances.  The dynamical, time-decay estimates that we prove provide a valuable contrast to the $L^2$-based conservation laws.  Using these estimates in concert, one can arrive at many other bounds such as Strichartz estimates for the evolution.  Such estimates are often of use when linearizations about special solutions have threshold phenomena for other dispersive equations.  


The mathematical analysis of Dirac operators is considerably smaller than the analysis of related equations such as the wave equation, Klein-Gordon or Schr\"odinger equation.  All of the results on three-dimensional Dirac equations in the literature assume that the threshold energies are regular.  The first paper that analyzed the time-decay for a perturbed (massless) Dirac equation was \cite{DF}.  In this paper D'Ancona and Fanelli proved a time-decay rate of $t^{-1}$ for large $t$ for the Dirac equation and related magnetic wave equations provided the potential satisfies a certain smallness condition.  Escobedo and Vega, \cite{EV} provided dispersive and Strichartz estimates for a free Dirac equation in service of analyzing a semi-linear Dirac equation.  In \cite{Bouss1}, Boussaid proved a variety of dispersive estimates for three dimensional Dirac equations.  These estimates were in both the weighted $L^2$ setting and in the sense of Besov spaces.  In this paper it was shown that one can obtain faster decay for large $t$ and smaller singularity as $t\to 0$ provided the initial data is smoother in the Besov sense.  We rely on the high-energy estimates in \cite{Bouss1} to contain our analysis to only a small neighborhood of the threshold.  The high-energy portion of the evolution requires smoothness on the initial data and potential, which we do not need for our results.    To be precise, by taking $p=1$ from Boussaid's general Besov space result, we see

\begin{theorem}[\cite{Bouss1}, Theorem~1.2]
	
	Assume that $V$ is a self-adjoint, $C^\infty$ function that satisfies $|\partial^\alpha V(x)|\les \la x\ra^{\rho+\alpha}$ for some $\rho>5$.  Then, for any $q\in [1,\infty]$, 
	$\theta \in[0,1]$ with $s-s'\geq 2+\theta$, we have
	$$
		\left\| e^{-itH}P_{ac}(H)(1-\chi(H)) \right\|_{B^s_{1,q} \to B^{s'}_{\infty, q}} \les 
		\left\{ \begin{array}{ll}
			|t|^{-1+\f \theta 2} & 0<|t|\leq 1\\
			|t|^{-1-\f \theta 2} & |t|\geq 1
		\end{array}
		\right..
	$$
	
\end{theorem}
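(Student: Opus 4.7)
The plan is the standard high-energy dispersive-estimate strategy: insert Stone's formula \eqref{eq:Stone} against the spectral cutoff $(1-\chi(H))$, expand the perturbed resolvents $\mathcal R_V^\pm$ around the free resolvent via the symmetric resolvent identity, and reduce each resulting term to an oscillatory integral through the factorization \eqref{eq:RDR}. That identity exhibits $\mathcal R_0(\lambda)$ as $(D_m+\lambda)R_0(\lambda^2-m^2)$, so after the change of variables $\lambda=\pm\sqrt{|k|^2+m^2}$ the free piece of the evolution becomes a pair of Klein--Gordon-type Fourier multipliers with phases $\mp t\sqrt{|k|^2+m^2}$ multiplied by the algebraic factor $(D_m+\lambda)$. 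On the support of $1-\chi(\lambda)$ one has $|\lambda|\geq m+\lambda_0$ and hence $|k|^2\geq 2m\lambda_0$, so the momentum variable is bounded away from the degenerate point $k=0$ where the Klein--Gordon phase is non-smooth.

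First I would treat the free piece by dyadically decomposing in $|k|$ and running stationary phase on $e^{\mp it\sqrt{|k|^2+m^2}}$. The Hessian of this phase degenerates radially as $|k|\to\infty$ (giving Klein--Gordon- rather than Schr\"odinger-type dispersion in $\R^3$), yielding $|t|^{-1}$ decay at each dyadic scale with a loss of two derivatives; this is the $\theta=0$ endpoint under the gap $s-s'\geq 2$. The $\theta=1$ endpoint is the companion wave-equation-type bound, giving $|t|^{-1/2}$ for $|t|\leq 1$ and $|t|^{-3/2}$ for $|t|\geq 1$ at the cost of one additional derivative. Complex interpolation in the Besov scale between these two endpoints produces the stated $|t|^{-1\pm\theta/2}$ bounds under the gap $s-s'\geq 2+\theta$, uniformly in the Besov exponent $q\in[1,\infty]$, via the usual Littlewood--Paley summation against a Besov atomic decomposition of the input.

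For the perturbed contribution I would invoke the high-energy limiting absorption principle of Agmon/Boussaid type, which yields $\|\la x\ra^{-s}\mathcal R_0^\pm(\lambda)\la x\ra^{-s}\|\les|\lambda|^{-1}$ for $|\lambda|\geq m+\lambda_0$ and $s>\tfrac12$, with analogous bounds for $\lambda$-derivatives. Combined with the smoothness and decay of $V$, the operator $I+\mathcal R_0^\pm(\lambda)V$ is invertible on suitable weighted spaces uniformly away from the threshold and from the finitely many eigenvalues in the spectral gap guaranteed by \cite{Co}. Writing
\[
\mathcal R_V^\pm(\lambda)=\mathcal R_0^\pm(\lambda)-\mathcal R_0^\pm(\lambda)V\bigl(I+\mathcal R_0^\pm(\lambda)V\bigr)^{-1}\mathcal R_0^\pm(\lambda),
\]
integration by parts in $\lambda$ as many times as $t$ permits trades each power of $t$ for a $\lambda$-derivative, and the Leibniz expansion is absorbed by the iterated LAP bounds together with the hypotheses on $V$. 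The resulting kernel is again dyadically localized in momentum and summed through the Besov characterization as in the free case, preserving the same dispersive rate.

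The main obstacle is bookkeeping the derivative losses at each dyadic scale consistently across the free and perturbed pieces: the Besov characterization on both ends of the mapping pins the regularity gap to be exactly $2+\theta$, so any mismatch between the losses coming from stationary phase on the free Klein--Gordon phase and those coming from $\lambda$-differentiation of $(I+\mathcal R_0^\pm V)^{-1}$ would spoil the endpoint uniformity in $q$. The smoothness and decay hypothesis $\rho>5$ is dictated precisely by this bookkeeping: it supplies enough derivatives of $V$ to balance the Leibniz factors generated by repeated $\lambda$-differentiation through the Born series, while maintaining the weights needed for the LAP bounds to propagate and for the high-energy tail of the Stone-formula integral to converge absolutely after the necessary integrations by parts.
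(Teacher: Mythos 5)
This statement is quoted in the paper as an external citation to Boussaid's Theorem~1.2 in \cite{Bouss1}; the paper does not prove it and offers no argument for it whatsoever. It is invoked purely as a black box to handle the high-energy portion of the evolution, freeing the authors to focus their actual work on the low-energy regime near the threshold $\pm m$, where the resolvent expansions and threshold classification of Sections~\ref{sec:exp}--\ref{sec:esa} live. So there is no ``paper's own proof'' against which to compare your sketch.

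As a reconstruction of what a proof might look like, your outline is plausible at the level of strategy: the Klein--Gordon phase $\sqrt{|k|^2+m^2}$ with its radially degenerating Hessian does give $|t|^{-1}$ decay rather than $|t|^{-3/2}$, the high-energy limiting absorption principle with $O(|\lambda|^{-1})$ weighted bounds is the standard input, and interpolating between a $\theta=0$ and a $\theta=1$ endpoint in the Besov scale is a natural way to produce the $|t|^{-1\pm\theta/2}$ family. However, you should be careful about a few points you gloss over. First, the bound you want at $\theta=1$ is $|t|^{-1/2}$ (small time) and $|t|^{-3/2}$ (large time), i.e.\ the stated rates $|t|^{-1+\theta/2}$ and $|t|^{-1-\theta/2}$, not a generic ``wave-type'' pair; matching the derivative loss $2+\theta$ exactly at both endpoints is the delicate part, not a bookkeeping afterthought. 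Second, the finitely many eigenvalues in the gap $(-m,m)$ guaranteed by \cite{Co} do not threaten $(I+\mathcal R_0^\pm V)^{-1}$ on the support of $1-\chi$, since that cutoff restricts to $|\lambda|\geq m+\lambda_0$; you should note that what actually matters is absence of embedded eigenvalues and resonances on $(-\infty,-m-\lambda_0]\cup[m+\lambda_0,\infty)$, which for this class of potentials is supplied by \cite{Thomp} and the LAP. Third, Boussaid's actual argument in \cite{Bouss1} is organized around Mourre/commutator estimates and a careful spectral decomposition adapted to the Dirac structure, not a bare Born-series-plus-integration-by-parts scheme; your sketch is in the spirit of the Schr\"odinger high-energy arguments but is not a faithful reconstruction of Boussaid's route. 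For the purposes of this paper, though, none of that needs to be reproved: the correct move here is simply to cite \cite{Bouss1}.
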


If we take $q=1$, and $s'=0$, this gives us a $t^{-\f32}$ decay of the $L^\infty$ norm of the solution, provided the initial data has two derivatives in $L^1$ in the Besov sense.

Our approach relies on a detailed analysis of the Dirac resolvent operators.  We follow the strategy employed 
by the first two authors in \cite{egd} analyzing the two-dimensional Dirac equation with potential, which has roots in the analysis of the two-dimensional Schr\"odinger equation by Schlag \cite{Sc2} and the authors \cite{eg2,eg3}.  In the same manner we build off the work of the first author and Schlag, \cite{ES,ES2}, in which dispersive estimates for the three-dimensional Schr\"odinger operators were studied with threshold resonances and/or eigenvalue.  These results have been sharpened, in terms of assumed decay on the potential, by Beceanu \cite{Bec}.  We note that extending these results on the Schr\"odinger evolution is  non-trivial even for the wave equation, see \cite{KS}.

In addition to proving time decay estimates for the Dirac evolution, we provide a full classification of the obstructions that can occur at the threshold of the essential spectrum at $\lambda =\pm m$.  For the Schr\"odinger equation in three dimensions, there can be a one dimensional space of resonances and/or finitely many eigenfunctions at the threshold.  This classification is inspired by the previous work on Schr\"odinger operators \cite{JN,ES,eg2}, though the rich structure of the Dirac operators provides additional technical challenges. 

Further study of the Dirac operator in the sense of smoothing and Strichartz estimates has been performed by a variety of authors, see for example \cite{BDF,C,CS}.  In the two-dimensional case,   the evolution on weighted $L^2$ spaces was studied in \cite{kopy}, which had roots in the work of Murata, \cite{Mur}.   Frequency-localized endpoint Strichartz estimates for the free Dirac equation are obtained in two and three spatial dimensions in \cite{BH3,BH}, which are used to study the cubic non-linear Dirac equation. Dispersive estimates for a one-dimensional Dirac equation were considered in \cite{CTS}.  During the review period for this article, the first two authors and Goldberg established Strichartz estimates and a limiting absorption principle for Dirac operators in dimension $n\geq2$, \cite{egg}.

In the paper we use the following notations.
The weighted $L^2$ space $L^{2,\sigma}(\R^3)=\{ f\,:\,  \la \cdot \ra^{\sigma}f(\cdot)\in  L^2(\R^3)\}$.  We also write $a-:=a-\epsilon$ for an arbitarily small, but fixed $\epsilon>0$.  Similarly, $a+:=a+\epsilon$.

The paper is organized as follows.
We begin in Section~\ref{sec:exp} by developing expansions for the Dirac resolvent operators.  In Section~\ref{sec:disp} we prove the dispersive bounds in all cases by reducing the bounds to oscillatory integral estimates.  Finally in Section~\ref{sec:esa} we provide a characterization of the threshold resonances and eigenfunctions.

\section{Resolvent expansions around threshold} \label{sec:exp} 
In this section we obtain expansions for the resolvent operators $\mathcal{R}^{\pm}_V(\lambda) $ in a neighborhood of the threshold energies $\pm m $. It is well-known (see e.g. \cite{GS}) that the resolvent, $ R_0^{\pm}(z^2)$, of the free Schr\"odinger operator is an integral operator with kernel    
\begin{align} \label{free}
R_0^{\pm}(z^2)=\frac{ e^{\pm iz |x-y|}} {4\pi |x-y|}= \sum_{j=0}^{\infty} (\pm i z)^j G_j, \,\,\,\,\text{ 
where}
\end{align} 
\begin{align} \label{def:g}
    G_j(x,y) = \frac{1}{4 \pi j!} |x-y| ^{j-1} \,\,\,\ j=0,1,2,...,.
\end{align}
Here we review some estimates (see e.g. \cite{GS,ES}) for $R_0^\pm(z^2)$ needed to study the Dirac evolution.  To best utilize these expansions, we employ the notation
$$
f(z)=\widetilde O(g(z))
$$
to denote
$$
\frac{d^j}{dz^j} f = O\big(\frac{d^j}{dz^j} g\big),\,\,\,\,\,j=0,1,2,3,...
$$
The notation refers to derivatives with respect to the spectral variable $z$, or $|x-y|$ in the expansions for
the integral kernel of the free resolvent operator, which depends on the variable $\rho=z|x-y|$.
If the derivative bounds hold only for the first $k$ derivatives we  write $f=\widetilde O_k (g)$.  In addition, if we write $f=\widetilde O_k(1)$, we mean that differentiation up to order $k$ is comparable to division by $z$ and/or $|x-y|$ as appropriate.
This notation applies to operators as well as
scalar functions; the meaning should be clear from the
context.
 
 In the following analysis we will obtain the expansion on the positive portion $[m, \infty)$ of the spectrum of $H$. A similar analysis with minor changes can be performed to obtain an expansion for the negative portion $(-\infty,-m]$, see Remark~\ref{rmk:negative}.
 
 Writing $ \lambda=\sqrt{m^2+z^2}$ for  $0 <z \ll 1$, and using \eqref{eq:RDR}, we have 
 \begin{multline}
 \label{resolventex}
      \mathcal{R}_0^\pm(\lambda)= \big[ -i \alpha \cdot \nabla + m \beta + \sqrt{m^2+ z^2} I \big] R_0^\pm(z^2) = \\ 
      \big[ -i \alpha \cdot \nabla + m(\beta+I)+ \frac{z^2}{2m} I + O (z^4) I \big] R_0^\pm(z^2) . 
  \end{multline}
For convenience we define  $M_{uc}$ and $M_{lc}$ to be $4\times 4$ matrix-valued operators with kernels  
 \begin{align*}
 M_{uc}= \left[\begin{array}{cc} I_{2\times2} &  0\\ 0 & 0 \end{array}\right] ,   \,\,\,  M_{lc}= \left[\begin{array}{cc} 0 & 0\\ 0 & I_{2\times2} \end{array}\right]. 
 \end{align*} 
 We also have 
the following projections $I_{uc}=\frac12(\beta+I)$ and $I_{lc}=\f12 (I-\beta)$, by 
 $$
I_{uc} \left[\begin{array}{cc} a \\ b \\ c \\ d \end{array}\right] = \left[\begin{array}{cc} a \\ b \\ 0 \\ 0 \end{array}\right], \,\,\,  I_{lc} \left[\begin{array}{cc} a \\ b \\ c \\ d \end{array}\right] = \left[\begin{array}{cc} 0 \\ 0 \\ c \\ d \end{array}\right].
 $$ 
In our expansions we will consider only the `+' case due to the simple relationship between the resolvents $\mathcal R_0^\pm(\lambda)$.
  \begin{lemma} \label{lem:R0exp} Let $r:= |x-y|$, $\lambda=\sqrt{z^2+m^2},$  $0 < z <1$. We have the following expansions for the free resolvent
   \begin{align} \label{eq:R0exp_00}
  \mathcal{R}^+_0(\lambda) &= \mathcal{G}_0 +O\big(z(1+r^{-1} )\big),\\ 
 \label{eq:R0exp_0}   &= \mathcal{G}_0 + iz \mathcal{G}_1 +\widetilde  O_2 \big(z^2  r+z^2 r^{-1} \big),\\
 \label{eq:R0exp_1}  &= \mathcal{G}_0 + iz \mathcal{G}_1 - z^2 \mathcal{G}_2+\widetilde  O_2 \big(z^3 r^{2}+z^3 r^{-1}\big),\\  
 \label{eq:R0exp_J}  &= \sum_{j=0}^J (iz)^j \mathcal{G}_j + \widetilde  O_2 \big(z^{J+\ell}  r^{J+\ell-1}+z^{J+\ell} r^{-1} \big),\,\,J\geq 3, 
  \end{align} 
  for any $0\leq \ell \leq 1$, where
  \begin{align}
 \mathcal{G}_0 (x,y) &= (D_m+mI)G_0(x,y)= [ -i \alpha \cdot \nabla + 2m I_{uc} ] G_0(x,y)=\frac{i\alpha \cdot (x-y)}{4\pi|x-y|^3} + \frac{mI_{uc}}{2\pi |x-y|},
\label{eq:G0 def}  \\
 \mathcal{G}_1(x,y)& = \frac{  m}{2 \pi}M_{uc}(x,y), \label{eq:G1 def} \\
  \mathcal{G}_2(x,y)&=  [ - i \alpha \cdot \nabla +2m I_{uc} ] G_2(x,y) - \frac{1}{2m} G_0 (x,y)\label{eq:G2 def}, 
\\
 	\mathcal G_j(x,y)&= O(\la x-y\ra^{j-1}),\,\,\,j\geq 3.  \nn
 \end{align}
 \end{lemma}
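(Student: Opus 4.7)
The plan is to combine the identity \eqref{eq:RDR} with the Schr\"odinger expansion \eqref{free} and the Taylor expansion of $\lambda$ near the threshold, then match powers of $z$ and bound the tail. The first step is to write $D_m+\lambda I = (-i\alpha\cdot\nabla + 2mI_{uc}) + \frac{z^2}{2m}I + O(z^4)I$, using $m\beta+mI = 2mI_{uc}$ and $\lambda-m = \frac{z^2}{2m}+O(z^4)$, and apply this operator to $R_0^+(z^2) = \sum_{j\ge 0}(iz)^j G_j$. Collecting powers of $z$, the $z^0$ coefficient is $(-i\alpha\cdot\nabla + 2mI_{uc})G_0$, which unfolds to the kernel in \eqref{eq:G0 def} via $\nabla_x|x-y|^{-1} = -(x-y)|x-y|^{-3}$. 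Since $G_1 = 1/(4\pi)$ is constant, the $z^1$ coefficient is just $2mI_{uc}G_1 = \tfrac{m}{2\pi}M_{uc}$, giving \eqref{eq:G1 def}. The $z^2$ coefficient combines $(-i\alpha\cdot\nabla + 2mI_{uc})(-G_2)$ from the differential part with $\tfrac{1}{2m}G_0$ from the $(\lambda-m)$ correction applied to the leading term of $R_0^+$, producing $-\mathcal{G}_2$ as in \eqref{eq:G2 def}. For $j\ge 3$ only the size bound $\mathcal{G}_j = O(\la x-y\ra^{j-1})$ is needed, and this follows from $G_j = |x-y|^{j-1}/(4\pi j!)$ together with the $G_{j-2}$ contributions from the Taylor expansion of $\lambda$.

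The main technical step is the remainder estimate. Let $h_J(\rho) := e^{i\rho} - \sum_{j=0}^J (i\rho)^j/j!$, so the Schr\"odinger tail equals $(4\pi r)^{-1}h_J(zr)$. Using the integral form of the Taylor remainder for $\rho$ small, the trivial bound $|h_J(\rho)| \lesssim 1$ for $\rho$ large, and the identity $h_J^{(k)} = i^k h_{J-k}$, interpolation yields $|h_J^{(k)}(\rho)| \lesssim \rho^{J+\ell-k}$ for any $\ell\in[0,1]$ and $0\le k \le J+\ell$. This gives $\widetilde O_2$-control of the Schr\"odinger tail by $z^{J+\ell}r^{J+\ell-1}$ (each $z$-derivative divides by $z$ as required). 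Applying $-i\alpha\cdot\nabla + 2mI_{uc}$ to the tail splits into two contributions: the zero-order piece preserves the bound $\widetilde O_2(z^{J+\ell}r^{J+\ell-1})$, while $\nabla_x[h_J(zr)/r] = \frac{\hat r}{r}\bigl[zh_J'(zr) - h_J(zr)/r\bigr]$ is $\widetilde O_2(z^{J+\ell}r^{J+\ell-2})$. Splitting the regimes $r\lesssim 1$ and $r\gtrsim 1$, one has $r^{J+\ell-2} \lesssim r^{J+\ell-1} + r^{-1}$ uniformly, yielding the two-term remainder in \eqref{eq:R0exp_J}. The residual $(\lambda-m)$ correction applied to the Schr\"odinger tail produces strictly higher-order terms that are absorbed by the same bound.

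Specializing $(J,\ell) = (0,1), (1,1), (2,1)$ yields \eqref{eq:R0exp_00}--\eqref{eq:R0exp_1} directly, and varying $\ell\in[0,1]$ for $J\ge 3$ gives \eqref{eq:R0exp_J}. The main subtlety to watch is the $\widetilde O_2$ bookkeeping: the interpolation bound on $h_J^{(k)}$ must be applied simultaneously for $k=0,1,2$ so that each $z$-derivative divides the remainder by $z$ cleanly without corrupting the $r$-dependence; once this uniform control is in place, the rest of the lemma is algebraic matching of coefficients.
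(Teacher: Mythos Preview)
Your approach is essentially the same as the paper's: both plug the Schr\"odinger expansion \eqref{free} into \eqref{resolventex}, collect powers of $z$ to identify the $\mathcal G_j$, and bound the Taylor remainder of $e^{izr}$; you simply fill in more of the remainder analysis than the paper does. One small technical slip to watch: your intermediate claim that the gradient of the tail is $\widetilde O_2(z^{J+\ell}r^{J+\ell-2})$ implicitly needs your interpolation bound $|h_J^{(k)}(\rho)|\lesssim \rho^{J+\ell-k}$ at $k=3$, but you only state it for $k\le J+\ell$; for $(J,\ell)=(1,1)$ the second $z$-derivative of the gradient term picks up a contribution of size $zr$, which is not $O(1)$. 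This does not break the argument, since the target bound in \eqref{eq:R0exp_0} is $\widetilde O_2(z^2 r+z^2 r^{-1})$, whose second $z$-derivative allows $O(r+r^{-1})$, and $zr\le r$ for $z<1$; just skip the intermediate $r^{J+\ell-2}$ bound and go straight to the two-term form for the low-$J$ cases.
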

 \begin{proof}
We will only prove \eqref{eq:R0exp_00} and \eqref{eq:R0exp_J} when $J=3$. The proof of the other expansions and the case $J>3$ are similar. 
First using \eqref{free} we have
$$
R_0^{+}(z^2)=\frac{ e^{\pm iz |x-y|}} {4\pi |x-y|}= G_0+  O (z ),  \text{ and}
$$ 
$$
\nabla R_0^{+}(z^2) =\nabla G_0  +  O (z r^{-1}).  
$$
The expansion \eqref{eq:R0exp_00} follows immediately.

To obtain \eqref{eq:R0exp_J} when $J=3$, again using \eqref{free} we have
$$
R_0^{+}(z^2)=\frac{ e^{\pm iz |x-y|}} {4\pi |x-y|}= G_0+izG_1-z^2G_2-iz^3G_3+\widetilde O_2(z^{3+\ell}r^{2+\ell}), \,\,\,0\leq \ell\leq 1,
$$ 
$$
\nabla R_0^{+}(z^2) =\nabla G_0  -z^2\nabla G_2-iz^3\nabla G_3+\widetilde O_2(z^{3+\ell}r^{1+\ell}), \,\,\,0\leq \ell\leq 1.
$$
Using this in \eqref{resolventex}, we have 
\begin{multline*}
\mathcal{R}_0^+(\lambda)  
    =  -i \alpha \cdot [ \nabla G_0  -z^2\nabla G_2-iz^3\nabla G_3]   +  2mI_{uc} (G_0+izG_1-z^2G_2-iz^3G_3) \\ + \frac{z^2}{2m} (G_0+izG_1)   +      \widetilde O_2 \big(  z^{3+\ell} r^{1+\ell} +z^{3+\ell} r^{2+\ell}+z^4r^{-1} \big).
  \end{multline*}
  Note that (for $0<z<1$ and $0\leq \ell\leq 1$)
  $$  \widetilde O_2 \big(  z^{3+\ell} r^{1+\ell} +z^{3+\ell} r^{2+\ell}+z^4r^{-1} \big) =  \widetilde O_2 \big(  z^{3+\ell} r^{2+\ell} +z^{3+\ell} r^{-1}  \big). $$
  Collecting the terms with same $z$ power, and noting that 
  $$  |\nabla G_3|+|G_3|+|G_1|\les \la  x-y \ra^2  $$
  yields   the claim.
 \end{proof}

 To obtain expansions for $\mathcal{R}^{\pm}_V(\lambda)=(D_m +V - (\lambda\pm i 0))^{-1} $ where $ \lambda= \sqrt {z^2 + m^2}$  we utilize the symmetric resolvent identity. First note that, since $ V : \R^3 \rightarrow \C^{4\times 4}$ is self-adjoint, we can write 
   $$ V= B^{*} \Lambda B= B^{*}  |\Lambda|^{\f12} U |\Lambda|^{\f12} B =: v^{*} U v,\,\,\,\text{ where}
   $$
 $$\Lambda=\text{diag}(\lambda_1,\lambda_2,\lambda_3,\lambda_4), \text{ with }\lambda_j \in \R ,$$ 
$$|\Lambda|^{\f12}= \text{diag}(|\lambda_1|^{\f12},|\lambda_2|^{\f12},|\lambda_3|^{\f12},|\lambda_4|^{\f12}),$$
$$ U= \text{diag}(\text{sign}(\lambda_1), \text{sign}(\lambda_2),\text{sign}(\lambda_3),\text{sign}(\lambda_4)).
$$
Defining $ A^\pm(z) = U + v\mathcal{R}^\pm_0(\sqrt {z^2 + m^2}) v^{*}$, as in \cite{egd}, the symmetric resolvent identity yields
\begin{align}\label{eq:pert1}
\mathcal{R}^{\pm}_V(\lambda) =\mathcal{R}^{\pm}_0(\lambda) - \mathcal{R}^{\pm}_0(\lambda) v^{*} (A^{\pm})^{-1} (z) v \mathcal{R}^{\pm}_0(\lambda). 
 \end{align} 
    Note that the statements of Theorems~\ref{thm:main1} and \ref{thm:main2} control operators $L^1(\R^3)$ to $ L^{\infty}(\R^3)$, while in our analysis we invert $A^\pm(z)$ in the $L^2(\R^3)$ setting. Since the leading term of the integral kernel of $\mathcal{R}_0^\pm(\lambda)$ has size $|x-y|^{-2}$, see \eqref{eq:G0 def}, it does not map $L^1(\R^3)$ to $L_{loc}^2 (\R^3)$. However, Remark~\ref{rmk:potential} below shows us the  iterated resolvents provide a bounded map between these spaces. Therefore to use the symmetric resolvent identity, we need two resolvents on both sides of $(A^{\pm})^{-1} (z)$. Accordingly we have 
  \begin{align*}
 \mathcal{R}_V^{\pm}(\lambda) = \mathcal{R}_0^{\pm}(\lambda) - \mathcal{R}_0^{\pm}(\lambda)V\mathcal{R}_0^{\pm}(\lambda)+ \mathcal{R}_0^{\pm}(\lambda)V\mathcal{R}_V^{\pm}(\lambda)V\mathcal{R}_0^{\pm}(\lambda). 
 \end{align*}
 Combining this with $\eqref{eq:pert1}$, we have the identity
\begin{multline}
\label{eq:pert2}
     \mathcal{R}_V^{\pm}(\lambda) = \mathcal{R}_0^{\pm}(\lambda)- \mathcal{R}_0^{\pm}(\lambda)V\mathcal{R}_0^{\pm}(\lambda)+\mathcal{R}_0^{\pm}(\lambda)V\mathcal{R}_0^{\pm}(\lambda)V\mathcal{R}_0^{\pm}(\lambda) \\ 
	+\mathcal{R}_0^{\pm}(\lambda)V\mathcal{R}_0^{\pm}(\lambda)v^*(A^{\pm})^{-1} (z) v \mathcal{R}_0^{\pm}(\lambda)V\mathcal{R}_0^{\pm}(\lambda). 
\end{multline}

\begin{lemma} \label{lem:potential} Let $|V(x)| \les \la x \ra ^{-\beta}$ where $\beta>2$, let $1\leq l,k<3$, with $l+k <\f92$   and $\sigma>\f12$. Then we have 
$$ \sup _{x  \in \R^3}\Big\| \int\frac{1}{|x-x_1|^l} |V(x_1)| \frac{1}{|y-x_1|^k} dx_1\Big\| _{L_y^{2,-\sigma} } \les 1. $$
The conclusion remains valid in the case $l$ or  $k$ is zero, provided $l+k< 3$, $\beta>3$ and $\sigma>\f32$.
\end{lemma}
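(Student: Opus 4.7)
The plan is to first obtain a pointwise bound on the kernel
\[
K(x,y) := \int_{\R^3} \frac{|V(x_1)|}{|x-x_1|^l\,|y-x_1|^k}\,dx_1
\]
and then directly verify the weighted $L^2_y$ bound. For the pointwise estimate, I would split the $x_1$-integral into the three regions $A := \{|x_1-x| \leq |x-y|/2\}$, $B := \{|x_1-y| \leq |x-y|/2\}$, and $C := \R^3 \setminus (A\cup B)$: on $A$ we have $|x_1-y|\sim|x-y|$, on $B$ we have $|x_1-x|\sim|x-y|$, and on $C$ both factors in the denominator are $\gtrsim |x-y|$. Combined with the elementary estimate
\[
\int \frac{\langle x_1\rangle^{-\beta}}{|z-x_1|^a}\,dx_1 \lesssim 1,\qquad 1\leq a<3,\ \beta>3-a,
\]
which holds in our setting (since $a\in\{l,k\}$ and $\beta>2$ give $a+\beta>3$), this yields a pointwise bound of the rough form
\[
K(x,y) \lesssim \frac{1}{|x-y|^{(l+k-3)_+}} + (\text{lower-order terms with decay in }\langle x\rangle \text{ or }\langle y\rangle).
\]

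Given this bound, $\|K(x,\cdot)\|_{L^{2,-\sigma}_y}^2$ reduces to integrating $\langle y\rangle^{-2\sigma}|x-y|^{-2(l+k-3)_+}$ plus lower-order terms. The critical behavior near $y=x$ requires $2(l+k-3)<3$, i.e.\ $l+k<9/2$---matching exactly our hypothesis. At infinity, in the regime $|y|\gg|x|$ one has $|y-x_1|\sim|y|$ throughout the effective support of $V$, giving $K(x,y)\lesssim |y|^{-k}$; combined with $\sigma>1/2$ and $k\geq 1$, the resulting integrand $\langle y\rangle^{-2\sigma}|y|^{-2k}$ is integrable because $2\sigma+2k>3$. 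A symmetric analysis handles the regime $|x|\gg|y|$ using the decay $K(x,y)\lesssim |x|^{-l}$.

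The main technical obstacle is the careful treatment of region $C$ under the weak hypothesis $\beta>2$. On $C$ both singular factors are $\gtrsim|x-y|$, and a naive bound would extract the full factor $|x-y|^{-l-k}$ outside and integrate $\langle x_1\rangle^{-\beta}$ over $\R^3$, which would force $\beta>3$. The resolution is to distribute the weight as $\langle x_1\rangle^{-\beta} = \langle x_1\rangle^{-\beta_1}\langle x_1\rangle^{-\beta_2}$ with $\beta_1+\beta_2=\beta$, absorbing each piece into one of the singular factors so that the weaker hypothesis $\beta>2$ suffices. For the alternate statement with $l$ or $k$ equal to zero, the coupling between $x$ and $y$ through $x_1$ is lost, so the weight $\langle x_1\rangle^{-\beta}$ must provide integrability on its own (forcing $\beta>3$), and the weaker decay of $K(x,y)$ in the remaining variable must be absorbed by a stronger weight on $y$ (forcing $\sigma>3/2$).
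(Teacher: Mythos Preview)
Your plan mirrors the paper's: both obtain a pointwise bound on $K(x,y)$ and then check the weighted $L^2_y$ norm directly. The paper packages the pointwise step by citing Lemma~\ref{lem:potential2} (from \cite{EG1}), which gives $K(x,y)\lesssim |x-y|^{-1}+|x-y|^{-(3/2-)}$ in one stroke; your three-region split is precisely how one proves such a lemma, so the approaches are essentially the same.

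Your region-$C$ fix is the one place that needs tightening. Splitting the weight as $\langle x_1\rangle^{-\beta_1}\langle x_1\rangle^{-\beta_2}$ and ``absorbing each piece into a singular factor'' is vague; if it means H\"older, it fails whenever $l$ or $k$ exceeds $3/2$. The correct and simpler move on $C$ is to extract only \emph{one} singular factor---say $|x-x_1|^{-l}\lesssim |x-y|^{-l}$---and keep the other inside the integral with the \emph{full} weight: your own elementary bound $\int \langle x_1\rangle^{-\beta}|y-x_1|^{-k}\,dx_1\lesssim 1$ then applies (it needs only $\beta>3-k$, and $k\ge 1$ gives exactly $\beta>2$). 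This yields $K(x,y)\lesssim |x-y|^{-l}$ on $C$ (and $\lesssim|x-y|^{-k}$ by symmetry), which combined with the $|x-y|^{-(l+k-3)_+}$ contribution from $A\cup B$ is enough for the weighted $L^2_y$ step. With this adjustment your outline goes through.
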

For the proof of Lemma~\ref{lem:potential}, we use the following  lemma  from \cite{EG1}.
\begin{lemma} \label{lem:potential2}Fix $u_1$, $u_2 \in \R^n$ and let $ 0 \leq k$, $l <n$ , $\beta >0$ , $k+l+\beta \geq n$ , $k+l \neq n $. We have
        $$ \int_{\R^n} \frac{\la x \ra ^{-\beta-}}{ |x- u_1|^k |x-u_2|^l} dx \les \left\{
            \begin{array}{ll}
            (\frac{1}{|u_1-u_2|})^{\max(0, k+l-n)} & \quad  |u_1-u_2| \leq 1, \\
             (\frac{1}{|u_1-u_2|})^{\min(k, l, k+l+\beta -n)} & \quad |u_1-u_2| > 1.
        \end{array} \right .
        $$
\end{lemma}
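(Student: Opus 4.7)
The plan is to apply Lemma~\ref{lem:potential2} three times in an iterated fashion. I first square the $L^{2,-\sigma}_y$-norm and use Fubini to rewrite it as a triple integral over $(y,x_1,x_2)$ with integrand
\[
\frac{|V(x_1)||V(x_2)|\,\la y\ra^{-2\sigma}}{|x-x_1|^l|x-x_2|^l|y-x_1|^k|y-x_2|^k}.
\]
For the inner $y$-integral, Lemma~\ref{lem:potential2} on $\R^3$ with $u_1=x_1$, $u_2=x_2$, both kernel exponents $k$, and weight $\la y\ra^{-2\sigma}$ applies cleanly: since $1 \le k < 3$ and $\sigma > \f12$, the conditions $k+k+2\sigma \ge 3$ and $k<3$ hold (the borderline $2k=3$ is dodged by a harmless $\epsilon$-perturbation of $k$). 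This produces a bound $\les |x_1-x_2|^{-\mu_1}$ on $|x_1-x_2|\le 1$ with $\mu_1 = \max(0,2k-3)$, and $\les |x_1-x_2|^{-\mu_2}$ on $|x_1-x_2|>1$ with $\mu_2 = \min(k,2k+2\sigma-3)$.

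Next I split the remaining double integral on $|x_1-x_2|\le 1$ vs $>1$, use $|V(x_j)|\les \la x_j\ra^{-\beta}$, and in each region bound the restricted integral in $x_1$ by its unrestricted counterpart. A second application of Lemma~\ref{lem:potential2}, now with $u_1=x$, $u_2=x_2$, kernel exponents $l$ and $\mu_j$, and weight $\la x_1\ra^{-\beta}$, is legitimate since $l<3$, $\mu_j<3$ (because $k<3$), and $\beta>2$ ensures $l+\mu_j+\beta\ge 3$. This yields a pointwise bound in $|x-x_2|$. A final split on $|x-x_2|\le 1$ vs $>1$ and a third application of Lemma~\ref{lem:potential2} (or direct integration in polar coordinates) against the leftover factor $\la x_2\ra^{-\beta}|x-x_2|^{-l}$ delivers the sought uniform-in-$x$ bound.

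The main obstacle is the exponent bookkeeping in the near/near sub-region ($|x_1-x_2|\le 1$ and $|x-x_2|\le 1$), where all three singularities pile up at $x_2=x$. There the local singularity has order $2l+\mu_1-3$ whenever $l+\mu_1>3$, which is locally integrable in $\R^3$ iff $2l+\mu_1<6$. With $\mu_1=\max(0,2k-3)$, this reduces to $l+k<9/2$ in the nontrivial range $k>3/2$, and to the automatic $l<3$ when $k\le 3/2$, matching the hypothesis exactly. Decay in $x_2$ at infinity comes for free from the weight $\la x_2\ra^{-\beta}$ with $\beta>2$, since the combined tail decay $\la x_2\ra^{-\beta}|x-x_2|^{-l-\min(l,\mu_j,l+\mu_j+\beta-3)}$ is integrable. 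The variant when $l=0$ or $k=0$ is easier: with $k=0$ the $y$-integral collapses to a finite constant under $\sigma>3/2$, after which a single application of Lemma~\ref{lem:potential2} to $\int \la x_1\ra^{-\beta}|x-x_1|^{-l}\,dx_1$ using $\beta>3$ and $l<3$ finishes the bound.
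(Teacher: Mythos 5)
You have proven the wrong statement. The lemma in question, Lemma~\ref{lem:potential2}, is the pointwise calculus estimate
\[
\int_{\R^n}\frac{\la x\ra^{-\beta-}}{|x-u_1|^k\,|x-u_2|^l}\,dx \les \cdots,
\]
which the paper does not prove but cites from \cite{EG1}. Your argument instead squares an $L^{2,-\sigma}_y$-norm, applies Fubini, and tracks the resulting four-fold product of kernels; that is precisely the proof of Lemma~\ref{lem:potential} (the iterated weighted-$L^2$ bound), not Lemma~\ref{lem:potential2}. The mismatch is not cosmetic: your proposal opens with ``apply Lemma~\ref{lem:potential2} three times in an iterated fashion,'' so if read as a proof of Lemma~\ref{lem:potential2} it is circular, and if read as a proof of Lemma~\ref{lem:potential} it answers a different question than the one posed.

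For the record, your argument for Lemma~\ref{lem:potential} also departs from the paper's own route: the paper applies Lemma~\ref{lem:potential2} once, uniformly in $x$, to obtain the pointwise bound $|x-y|^{-1}+|x-y|^{-\f32+}$ for the inner $x_1$-integral, and then checks directly that $\la y\ra^{-\sigma}(|x-y|^{-1}+|x-y|^{-\f32+})\in L^2_y$ when $\sigma>\f12$. Your approach of squaring first and iterating the convolution lemma over $(y,x_1,x_2)$ is heavier and requires more delicate bookkeeping in the triple-near regime, though it is plausibly salvageable. But a genuine proof of Lemma~\ref{lem:potential2} itself must proceed from scratch: one splits $\R^n$ into the balls $B(u_1,\tfrac{d}{2})$, $B(u_2,\tfrac{d}{2})$ (with $d=|u_1-u_2|$), an intermediate annulus, and the far region $|x|\gg 1+|u_1|+|u_2|$; on each piece one of the three factors is essentially constant and one integrates the remaining fractional power directly, with $\la x\ra^{-\beta-}$ providing convergence at infinity. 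None of this appears in your proposal.
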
   
  \begin{proof}[Proof of Lemma~\ref{lem:potential}]
Using \ref{lem:potential2} we can obtain the following bound when $l,k\geq 1$ and $l+k<\f92$.
$$ 
\int_{\R^3} \frac{\la x_1 \ra ^{-\beta-}}{ |x- x_1|^k |x_1-y|^l} dx_1 \les \frac{1}{|x-y|} +\frac{1}{|x-y|^{\f32-}}
$$
 provided $\beta > 2$. 
 Note that when $k+l=3$ we can apply the lemma after using the inequality
 $$
 \frac{1}{ab^2} \les \frac{1}{ab^{2-}}+ \frac{1}{ab^{2+}}  \,\,\ \text{for any} \ a,b >0.
 $$
This yields the first part of the lemma since for $\sigma>\frac12$ we have    
$$
     \sup_{x\in \R^3}\left\| \frac{\la y \ra^{ -\sigma}}{|x-y|^{\f32-}} \right\|_{L^2_y(\R^3)}, \quad
     \sup_{x\in \R^3}
      \left\| \frac{\la y \ra^{ -\sigma}}{|x-y|} \right\|_{L^2_y(\R^3)}  \les 1.
$$
If at least one of $l,k=0$ then we pick $\beta>3$ so that 
$$ \sup_{x\in \R^3} \int_{\R^n} \frac{\la x_1 \ra ^{-\beta-}}{ |x- x_1|^k |x_1-y|^l} dx_1 \les 1\in L^{2,-\f32-}_y(\R^3).$$
\end{proof}

\begin{rmk} \label{rmk:potential}Using Lemma~\ref{lem:potential} one can conclude that for any $|V(x)| \les \la x \ra ^{-2-}$ and $\sigma> {\f 12}$, 
  $$ \sup _{x  \in \R^3} \| \mathcal{R}_0^{\pm}(\lambda) V \mathcal{R}_0^{\pm} (\lambda) \| _{L_y^{2,-\sigma} } \les \la z \ra^2. $$
Indeed, using  \eqref{resolventex}, we have
 $$ | \mathcal{R}_0(\lambda) | \les \frac{1}{|x-x_1|^2}  + \frac{   \la z\ra}{ |x-x_1| }$$
and accordingly,
$$
| \mathcal{R}_0(\lambda)(x,x_1)V(x_1)\mathcal{R}_0(\lambda)(x_1,y)| \les  \la z\ra^2  \sum_{l,k\in\{1,2\}} \frac{\la x_1 \ra ^{-2-}}{|x-x_1|^k|y-x_1|^l}.
$$
This gives the claim by Lemma~\ref{lem:potential}. 

  \end{rmk}

\begin{defin} \label{def:HS} We say that an operator $T(z)$ with kernel $T(x,y)$ is absolutely bounded if $|T(x,y)|$ gives rise to a bounded operator from $L^2(\R^3)$ to $L^2(\R^3)$. We use the representation $T(z) = \widetilde{O}_j(z^p) $ if $T(z)$ satisfies the bounds 
$ \| |\partial_z^k T(z)| \|_{L^2 \rightarrow L^2} \les z^{p-k}$ for $k=1,2,3,...,j$.
\end{defin}
\begin{defin} An operator $T $ is Hilbert-Schmidt if its kernel $T(x,y)$ satisfies 
$$
\| T\|_{HS}^2=  \int _{\R^n} \int _{\R^n}  |T(x,y)|^2 dx dy   < \infty.
$$
\end{defin}
Hilbert-Schmidt operators and finite rank operators are absolutely bounded. 

 We have developed expansions for $\mathcal R_0^+(\lambda )$ using the Schr\"odinger resolvent $R_0^+(z^2)$. We  develop expansions for $A(z):=A^+(z)$ when $z>0$ and $A(z):=A^-(-z)$ when $z<0$. It follows from from \eqref{free} that  $A^-(z)=A^+(-z)$.
 \begin{lemma} \label{lem:M0M1}
 Let $|V(x)| \les \la x \ra ^{-\beta}$ for some $\beta>0$, and define $A_0:=U + v\mathcal{G}_0v^*$. Then we have the following expansions for $A(z)$ when $|z|<1$.  
\begin{align*}
A(z) &= A_0 + iz v\mathcal{G}_1v^* - z^2 v\mathcal{G}_2 v^* -i z^3  v\mathcal{G}_3v^* + M_0(z), \\
       &= A_0 + iz v\mathcal{G}_1v^* - z^2 v\mathcal{G}_2 v^* -i  z^3  v\mathcal{G}_3v^* + z^4 v\mathcal{G}_4 v^* + iz^5 v\mathcal{G}_5 v^*+ M_1(z),\,\,\,\,\,\text{ where}\\
  & M_0(z)=\widetilde{O}_2(z^{3+})  \,\ \text{if} \,\  \beta > 7 \,\ \text{and} \,\, M_1(z)=\widetilde{O}_2(z^{5+}) \,\,\  \text{if} \,\  \beta > 11.
 \end{align*}
\end{lemma}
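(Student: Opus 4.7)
The plan is to expand $A(z)$ by substituting the free resolvent expansion \eqref{eq:R0exp_J} of Lemma~\ref{lem:R0exp} into the identity $A(z)=U+v\mathcal R_0^+(\sqrt{z^2+m^2})v^*$. Taking $J=3$ isolates the explicit polynomial part of the first stated expansion, since $(iz)^j\mathcal G_j$ produces exactly $iz\,\mathcal G_1$, $-z^2\mathcal G_2$, and $-iz^3\mathcal G_3$ for $j=1,2,3$; the residual piece is
\[
  M_0(z) \;=\; v \cdot \widetilde O_2\bigl(z^{3+\ell} r^{2+\ell} + z^{3+\ell} r^{-1}\bigr) \cdot v^*
\]
for any $0\le\ell\le 1$. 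Taking $J=5$ handles the second expansion analogously, contributing explicit terms through $(iz)^4\mathcal G_4=z^4\mathcal G_4$ and $(iz)^5\mathcal G_5=iz^5\mathcal G_5$ and leaving $M_1(z)=v\cdot\widetilde O_2(z^{5+\ell}(r^{4+\ell}+r^{-1}))\cdot v^*$.

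To show $M_0(z)=\widetilde O_2(z^{3+})$ as an absolutely bounded operator on $L^2$, I would estimate the Hilbert--Schmidt norm of the kernel. Using $|v(x)|\lesssim\langle x\rangle^{-\beta/2}$, the $r^{-1}$ piece contributes
\[
  z^{2(3+\ell)} \int\!\!\int \langle x\rangle^{-\beta}\langle y\rangle^{-\beta}\,|x-y|^{-2}\,dx\,dy,
\]
which is finite as soon as $\beta>3$ (either by direct computation, since $|x-y|^{-2}$ is locally integrable in $\R^3$, or by Lemma~\ref{lem:potential2}). For the $r^{2+\ell}$ piece, bound $|x-y|^{2(2+\ell)}\lesssim\langle x\rangle^{4+2\ell}+\langle y\rangle^{4+2\ell}$ and reduce to a product of one-variable integrals, each convergent provided $\beta-4-2\ell>3$. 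Choosing $\ell>0$ arbitrarily small yields the threshold $\beta>7$ and a HS norm of size $z^{3+}$. The identical computation for $M_1$ with $J=5$ replaces the growth $r^{2+\ell}$ by $r^{4+\ell}$, and thus requires $\beta>11$.

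Since the $\widetilde O_2$ notation demands control of the first two $z$-derivatives of the kernel, I would finish by applying the same HS estimate to $\partial_z^k M_i$ for $k=1,2$. Each $z$-derivative scales the $z$-power of the remainder down by one while leaving the $r$-structure intact, so the $x,y$ integrals appearing in the HS computation are exactly those above and remain finite under the same hypotheses on $\beta$. Because any operator whose kernel is dominated pointwise by a Hilbert--Schmidt kernel is absolutely bounded, the estimates transfer to $|\partial_z^k M_i|$ as required by Definition~\ref{def:HS}.

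The only subtle point is the trade-off between the remainder exponent $\ell$ and the potential decay $\beta$: one must take $\ell$ strictly positive --- at the cost of an arbitrarily small amount of decay --- in order to upgrade a bare $\widetilde O_2(z^3)$ or $\widetilde O_2(z^5)$ to the integrable bounds $\widetilde O_2(z^{3+})$ and $\widetilde O_2(z^{5+})$ that will be needed when inverting $A(z)$ and performing stationary-phase arguments later on. The jump from $\beta>7$ to $\beta>11$ is precisely the cost of peeling off two additional explicit terms, each contributing two extra powers of $r$ inside the HS integral.
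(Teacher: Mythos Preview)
Your proposal is correct and follows essentially the same route as the paper: substitute the expansion \eqref{eq:R0exp_J} with $J=3$ (respectively $J=5$) and $\ell=0+$ into $A(z)=U+v\mathcal R_0 v^*$, then verify that the remainder $v\cdot\widetilde O_2(z^{J+\ell}(r^{J+\ell-1}+r^{-1}))\cdot v^*$ has Hilbert--Schmidt kernel with the right $z$-behaviour. The paper carries out exactly this computation, recording the pointwise bounds on $|\partial_z^k M_i(z)(x,y)|$ and then checking that $|v(x)||x-y|^{-1}|v^*(y)|$ and $|v(x)||x-y|^{p}|v^*(y)|$ are Hilbert--Schmidt under the stated decay; your threshold $\beta>7$ (resp.\ $\beta>11$) coming from the $r^{2+}$ (resp.\ $r^{4+}$) piece matches the paper's, and your slightly coarser requirement $\beta>3$ for the $r^{-1}$ piece is harmless since it is dominated by the other constraint.
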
 
\begin{proof} By the Definition~\ref{def:HS} it is enough to show that  $\| \partial_z^k M_0(z)(x,y) \|_{HS} \les z^{(3-k)+}$ and $\| \partial_z^k M_1(z)(x,y) \|_{HS} \les z^{(5-k)+}$ for the given value(s) of $\beta$.
Using the expansion  \eqref{eq:R0exp_J} with $J=3$ and $J=5$ respectively, and $\ell=0+$ we have
\begin{align*}
&|\partial_z^k M_0(z)(x,y)| \les z^{(3-k)+} \Big( \frac{|v(x)||v^*(y)|}{|x-y|} + |v(x)||x-y|^{2+}|v^*(y)| \Big), \\
&|\partial_z^k M_1(z)(x,y)| \les z^{(5-k)+} \Big( \frac{|v(x)||v^*(y)|}{|x-y|} + |v(x)||x-y|^{4+}|v^*(y)| \Big),
\end{align*}
for $k=0,1,2$.  
 $\frac{|v(x)||v^*(y)|}{|x-y|}$ is Hilbert-Schmidt provided $|v(x)| \les \la x \ra ^{-1-}$, and for $p\geq 0$, $|v(x)||x-y|^p|v^*(y)|$ is Hilbert-Schmidt provided $|v(x)| \les \la x \ra^{-p - \frac{3}{2}-}$.
\end{proof}
Lemma~\ref{lem:M0M1} together with Lemma~\ref{lem:inverse} shows that the invertibility of $A(z)$ as an operator on $L^2$ for small $z$ depends upon the invertibility of the operator $A_0$ on $L^2$. Before we discuss the invertibility of  $A(z)$ we give the following definitions for resonances at the threshold $\lambda=m$.
  \begin{defin}\label{def:res defn}
  \begin{enumerate}
  \item We say that $\lambda=m$ is a regular point of the spectrum of $H=D_m+V$ provided  $A_0=U+v\mathcal{G}_0v^*$ is invertible on $L^2(\R^3)$.  
  \item Assume that $\lambda=m$ is not a regular point of the spectrum. Then we define $S_1$ as the Riesz projection onto the kernel of $A_0 $ as an operator on $L^2(\R^3)$. In this case $A_0+S_1$ is invertible. Accordingly we define $D_0:= (A_0+S_1)^{-1}$. We say that there is a resonance of the first kind at the threshold ($\lambda=m$) if $ S_1v\mathcal{G}_1v^*S_1$ is invertible in $S_1L^2$, in this case we define $D_1:= (S_1v\mathcal{G}_1v^*S_1)^{-1}$.
  \item Assume $ S_1v\mathcal{G}_1v^*S_1$ is not invertible. Let $S_2$ be the Riesz projection onto the kernel of $ S_1v\mathcal{G}_1v^*S_1$ as an operator on $S_1L^2(\R^3)$. Then $S_1v\mathcal{G}_1v^*S_1+S_2$ is invertible on $S_1L^2(\R^3)$ and we denote $D_2:=(S_1v\mathcal{G}_1v^*S_1+S_2)^{-1}$.  We say there is a resonance of the second kind at threshold if $S_2=S_1\neq 0$. If $S_2\neq 0$ and $S_2 \neq S_1$, we say there is a resonance of the third kind.
  \end{enumerate}
   \end{defin}
   
\begin{rmk} \label{rmk:negative}
\begin{enumerate}[(i)]
\item We provide a full characterization of the threshold obstructions  and relate them to various spectral subspaces of $H=D_m+V$  in Section~\ref{sec:esa}.
In particular $S_1\neq 0$, $S_1 \neq S_2$ corresponds to the existence of a resonance and $S_2 \neq 0$ corresponds to the existence of an eigenvalue at the threshold. A resonance of the first kind indicates that there is a threshold resonance  but not an eigenvalue. 

\item Note that $v\mathcal{G}_0v^*$ is compact and self-adjoint. Hence, $A_0$ is a compact perturbation of $U$ and it is self-adjoint. Also, the spectrum of $U$ is in $\{-1,1\}$. Hence, zero is the isolated point of the spectrum of $A_0$ and $dim(Ker_{A_0})$ is finite. Since $S_2\leq S_1$, $S_2$ is also a finite rank projection. In addition, if there is resonance of the first kind then the range of $S_1$ is at most two dimensional, see Corollary~\ref{cor:2d}.  Heuristically, the rank of $S_1$ being at most two corresponds to the possibility of having a `spin up' and a `spin down' resonance function at the threshold energy.
\item We do our analysis in the positive portion of the spectrum $[m,\infty)$ and develop expansions of $\mathcal{R}_V$ around the threshold $\lambda=m$. One can do the same analysis for the negative portion of the spectrum taking $\lambda = -\sqrt{z^2+m^2}$. In this case the perturbed equation has a threshold resonance or eigenvalue at $\lambda=-m$ is related to distributional solutions of  $ (H+ mI )g=0$.

\item We have
$$D_0S_1=S_1D_0=S_1,$$
and similarly for $S_2$ and $D_2$. We prove below that $D_0$ is absolutely bounded. The absolute boundedness of $D_1$, $D_2$ is clear since they are  finite rank operators.
\end{enumerate}
\end{rmk}
\begin{lemma} The operator $D_0$ is absolutely bounded in $L^2(\R^3)$.
\end{lemma}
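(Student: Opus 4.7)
The plan is to massage $(A_0+S_1)D_0 = I$ into a Neumann-type identity for $D_0$ and iterate it until the remainder becomes Hilbert--Schmidt. Using $A_0 = U + v\mathcal{G}_0 v^*$, $U^2 = I$, and $S_1 D_0 = S_1$ from Remark~\ref{rmk:negative}(iv), I would first derive
$$D_0 = U - U v \mathcal{G}_0 v^* D_0 - US_1,$$
and then iterate $N$ times to obtain
$$D_0 = \sum_{k=0}^{N-1}(-Uv\mathcal{G}_0 v^*)^k (U - US_1) + (-Uv\mathcal{G}_0 v^*)^N D_0.$$

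The goal is then to argue (i) each term in the finite sum is absolutely bounded, and (ii) for $N$ large enough, $(v\mathcal{G}_0 v^*)^N$ is Hilbert--Schmidt, so that the remainder, being the composition of a Hilbert--Schmidt operator with the $L^2$-bounded operator $D_0$, is itself Hilbert--Schmidt and in particular absolutely bounded. Since absolute boundedness is stable under sums and compositions (via the pointwise bound $|AB(x,y)|\leq \int |A(x,z)||B(z,y)|\,dz$), this would imply $|D_0|$ is bounded on $L^2$.

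For (i), I would first show that $v\mathcal{G}_0 v^*$ is itself absolutely bounded on $L^2$: from \eqref{eq:G0 def} one has $|\mathcal{G}_0(x,y)| \les |x-y|^{-2}$, and $|v(x)| \les \la x\ra^{-\beta/2}$ with $\beta > 2$, so Schur's test controls the operator with kernel $|v(x)||x-y|^{-2}|v^*(y)|$. Combined with $U$ being a bounded pointwise multiplier and $US_1$ being finite rank (hence trivially absolutely bounded), each term in the finite sum is a composition of absolutely bounded pieces.

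For (ii), I would iterate Lemma~\ref{lem:potential2} to track how compositions tame the diagonal singularity. One composition reduces the singular behavior of the kernel from $|x-y|^{-2}$ to $|x-y|^{-1}$ near the diagonal; a further composition lands in the borderline case $k+l=n$ of that lemma, producing at worst a $\log(1/|x-y|)$ singularity; together with the $v$-weights this gives a kernel lying in $L^2(\R^3\times\R^3)$ under the ample decay of $V$ available from the hypotheses of Theorems~\ref{thm:main1} and~\ref{thm:main2}, so $N = 3$ suffices. The main obstacle is the bookkeeping in step (ii): handling the borderline $k+l=n$ case (where a logarithm appears) and verifying that the $v$-weights absorb both the remaining mild diagonal singularity and the polynomial growth in $|x-y|$ at infinity contributed by the iterated convolutions.
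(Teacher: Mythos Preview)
Your approach is essentially the paper's: iterate a resolvent-type identity for $D_0$ until the remainder is the composition of the bounded operator $D_0$ with a Hilbert--Schmidt tail, and handle the finite sum via absolute boundedness of $U$, $S_1$, and $v\mathcal{G}_0v^*$. The only difference is that the paper stops at $N=2$, writing $D_0 = U - U(v\mathcal{G}_0v^*+S_1)U + D_0(v\mathcal{G}_0v^*+S_1)U(v\mathcal{G}_0v^*+S_1)U$ and observing via Lemma~\ref{lem:potential} that $v\mathcal{G}_0V\mathcal{G}_0v^*$ is already Hilbert--Schmidt (its kernel is $\lesssim |v(x)|\,|x-y|^{-1}|v^*(y)|\in L^2(\R^6)$), so your third iteration and the borderline logarithmic analysis are unnecessary but harmless.
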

\begin{proof} Recall that $D_0 = (U+v\mathcal{G}_0v^* + S_1)^{-1}$. Using the resolvent identity 
twice we obtain
\begin{align}\label{resolvD0}
D_0= U - U(v\mathcal{G}_0v^* + S_1) U + D_0(v\mathcal{G}_0v^* + S_1) U(v\mathcal{G}_0v^* + S_1) U.
\end{align} 
 Note that $U$ is absolutely bounded. Also note that since $S_1$ is finite rank, any summand containing $S_1$ is finite rank, and hence absolutely bounded. Using \eqref{eq:G0 def}, we have
$$ | \mathcal{G}_0(x,y)| \leq c_1 I_1(x,y) + c_2 I_2(x,y), $$ 
where $I_1$ and $I_2$ are the fractional integral operators. One can see that these two operators are compact operators on $L^{2,\sigma} \rightarrow L^{2,-\sigma}$ for $\sigma >1$, see Lemma~2.3 in \cite{Jen}. Therefore $v\mathcal{G}_0v^*$ is absolutely bounded.

It remains to prove that  
\be\label{eq:gecici}
D_0 v\mathcal{G}_0v^* U v\mathcal{G}_0v^* U=D_0 v\mathcal{G}_0V\mathcal{G}_0v^*  U
\ee
 is absolutely bounded.  Recalling the definition of $\mathcal{G}_0$ given with \eqref{eq:G0 def} one can see that the operator
 $  v\mathcal{G}_0V\mathcal{G}_0v^* U$ is Hilbert-Schmidt  by Lemma~\ref{lem:potential} for any $|v(x)| \les \la x \ra ^{-2-}$. Finally, being the composition of a bounded operator, $D_0$,  and a Hilbert-Schmidt operator,  $  v\mathcal{G}_0V\mathcal{G}_0v^* U$, \eqref{eq:gecici} is Hilbert-Schmidt and hence absolutely bounded.
\end{proof}
We use the following lemma from \cite{JN} to invert the operator $A(z)=U + v\mathcal{R}_0(\sqrt {z^2 + m^2}) v^{*}$ around $z=0$, ($\lambda=m$).   

  \begin{lemma} \label{lem:inverse}
  
   Let $ \mathbb{F} \subset \C \setminus\{0\}$ have zero as an accumulation point. Let $A(z)$, $z\in \mathbb{F}$, be a family of bounded operators of the form 
   $$ A(z) =  A_0 +z  A_1(z) $$
 with $A_1(z)$ uniformly bounded as $z\rightarrow 0$. Suppose that $z=0$ is an isolated point of the spectrum of $A_0$, and let $S$ be the corresponding Riesz projection. Assume that   rank(S) $<\infty$. Then for sufficiently small $z\in \mathbb{F}$ the operators 
    \begin{align} \label{def:B(z)}
    B(z):= {\f 1 z} ( S- S(A(z)+S)^{-1} S) 
    \end{align}
 are well-defined and bounded on $\mathcal{H}$. Moreover, if $A_0=A_0^{*}$, then they are uniformly bounded as  $z\rightarrow 0$. The operator $A(z)$  has bounded inverse in $\mathcal{H}$ if and only if B(z) has a bounded inverse in $S\mathcal{H}$, and in this case 
   \begin{align} \label{def:Ainv}
   A^{-1} (z) = (A(z)+S)^{-1} + {\f 1 z} (A(z)+S)^{-1}  SB^{-1}(z)S(A(z)+S)^{-1}. 
   \end{align}
\end{lemma}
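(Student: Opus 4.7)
The plan is to treat this as a Feshbach/Schur-complement argument organized around the auxiliary operator $T(z):=A(z)+S$. The pieces are almost entirely algebraic once $T(z)$ is known to be boundedly invertible.

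\textbf{Step 1 (invertibility of $T(z)$).} Since $0$ is an isolated point of $\sigma(A_0)$ and the Riesz projection $S$ has finite rank, the decomposition $\mathcal H=S\mathcal H\oplus(I-S)\mathcal H$ reduces $A_0$: on $(I-S)\mathcal H$ its spectrum is bounded away from $0$, so $A_0|_{(I-S)\mathcal H}$ is boundedly invertible, while $A_0|_{S\mathcal H}$ is finite-dimensional nilpotent. Adding $S$ turns the nilpotent piece into $I+\text{(nilpotent)}$, hence invertible, so $A_0+S$ is invertible on $\mathcal H$. Writing $A(z)+S=(A_0+S)\bigl(I+z(A_0+S)^{-1}A_1(z)\bigr)$ and using uniform boundedness of $A_1(z)$, a Neumann series produces $T(z)^{-1}$ with uniform bound on some neighborhood of $0$ in $\mathbb F$. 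In particular $B(z)$ is well-defined and bounded for each such small $z$.

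\textbf{Step 2 (the inversion formula).} With $A(z)=T(z)-S$, the key algebraic identities are
\[
A(z)T(z)^{-1}=I-ST(z)^{-1},\qquad A(z)T(z)^{-1}S=S-ST(z)^{-1}S=zB(z),
\]
together with the mirror identities $T(z)^{-1}A(z)=I-T(z)^{-1}S$ and $ST(z)^{-1}A(z)=zB(z)$. Setting
\[
F(z):=T(z)^{-1}+\tfrac{1}{z}T(z)^{-1}SB(z)^{-1}ST(z)^{-1},
\]
direct multiplication gives $A(z)F(z)=I-ST(z)^{-1}+(zB(z))B(z)^{-1}(z^{-1})ST(z)^{-1}=I$, and similarly $F(z)A(z)=I$, whenever $B(z)$ is boundedly invertible on $S\mathcal H$. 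For the converse, starting from $A(z)^{-1}=T(z)^{-1}+T(z)^{-1}SA(z)^{-1}$ and sandwiching by $S$, I get on $S\mathcal H$ the identity $z(S+SA(z)^{-1}S)B(z)=S$, which exhibits an explicit inverse for $B(z)$ on the finite-dimensional range of $S$ whenever $A(z)$ is invertible.

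\textbf{Step 3 (uniform boundedness of $B(z)$ when $A_0=A_0^*$).} When $A_0$ is self-adjoint, $S$ is the orthogonal projection onto $\ker A_0$, so $A_0S=SA_0=0$. This gives $(A_0+S)S=S(A_0+S)=S$, hence both $(A_0+S)^{-1}S=S$ and $S(A_0+S)^{-1}=S$; in particular $S(A_0+S)^{-1}S=S$. Applying the resolvent identity
\[
T(z)^{-1}-(A_0+S)^{-1}=-z\,T(z)^{-1}A_1(z)(A_0+S)^{-1}
\]
and multiplying by $S$ on both sides yields $ST(z)^{-1}S-S=-z\,ST(z)^{-1}A_1(z)S$, i.e., $B(z)=ST(z)^{-1}A_1(z)S$, which is uniformly bounded by Step 1.

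\textbf{Main obstacle.} The analytic content is modest: everything reduces to the recognition that the Riesz-projection subtraction in $B(z)$ telescopes against the identity $A(z)T(z)^{-1}S=zB(z)$. The self-adjointness hypothesis enters in precisely one place, to force $A_0S=0$ so that the numerator $S-ST(z)^{-1}S$ actually vanishes at $z=0$ and the factor $z^{-1}$ is absorbed; without it, the generalized eigenspace of $A_0$ at $0$ may carry a nontrivial nilpotent structure and only pointwise boundedness is available. The remainder is careful bookkeeping with the resolvent identity and the block decomposition determined by $S$.
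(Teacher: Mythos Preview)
The paper does not give its own proof of this lemma; it simply quotes the statement from Jensen--Nenciu \cite{JN} and uses it as a black box. Your argument is correct and is essentially the standard Feshbach/Schur-complement proof (which is also the approach in \cite{JN}): invertibility of $A_0+S$ via the Riesz decomposition, the algebraic inversion formula from $A(z)T(z)^{-1}S=zB(z)$, and the self-adjoint case handled by $S(A_0+S)^{-1}=S$ together with the resolvent identity. Nothing is missing.
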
  
\begin{lemma}\label{lem:A+S_1} Suppose that $\lambda=m$ is not a regular point of the spectrum of $H=D_m +V$, with $|V(x)|\les \la x\ra^{-\beta}$ for some $\beta>0$, and let $S_1$ be the Riesz projection from Definition~\ref{def:res defn}. Then for sufficiently small $z_0>0$ , the operator $A(z) +S_1$ is invertible for all $0<|z|<z_0 <1$  as a bounded operator on $L^2(\R^3) \rightarrow  L^2(\R^3)$. Further, one has
\begin{align}  \label{eq:A+S-1}
\begin{split}
(A(z)+S_1)^{-1} &=D_0 - iz [ D_0 v \mathcal{G}_1 v^*D_0]+ z^2 [ D_0v\mathcal{G}_2v^*D_0 - D_0v\mathcal{G}_1v^*D_0 v\mathcal{G}_1v^*D_0 ] \\
                                     &+ z^3 \Gamma_0 + \widetilde{O}_{3}(z^{3+}) \,\,\ \text{for} \,\, \beta>7,
                                    \end{split}
   \end{align}                                   
\begin{align}\label{eq:A+S-2}
\begin{split}
(A(z)+S_1)^{-1} &=D_0 - iz [ D_0 v \mathcal{G}_1 v^*D_0]+ z^2 [ D_0v\mathcal{G}_2v^*D_0 - D_0v\mathcal{G}_1v^*D_0 v\mathcal{G}_1v^*D_0  ] \\
                                     &+ z^3\Gamma_0 + z^4 \Gamma_1 + z^5 \Gamma_2 + \widetilde{O}_5(z^{5+}) \,\, \text{for} \,\, \beta > 11.
                                     \end{split}
                                     \end{align}
                                                                          
 
Here $\Gamma_0$, $\Gamma_1$ and $\Gamma_2$ are z independent absolutely bounded operators.                                   
\end{lemma}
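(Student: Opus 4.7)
The plan is a straightforward Neumann series. Using Lemma~\ref{lem:M0M1}, I would first write
$$
A(z)+S_1 = (A_0+S_1) + E(z), \qquad E(z) := izv\mathcal{G}_1v^* - z^2 v\mathcal{G}_2v^* - iz^3 v\mathcal{G}_3v^* + M_0(z),
$$
and factor as $A(z)+S_1 = (A_0+S_1)\bigl(I + D_0 E(z)\bigr)$. Each $v\mathcal{G}_jv^*$ with $j=1,2,3$ is Hilbert-Schmidt under the decay hypothesis (using the pointwise bounds $|\mathcal{G}_j(x,y)|\les \la x-y\ra^{j-1}$ from Lemma~\ref{lem:R0exp} together with a suitable power of $|v|$), and $D_0$ was just shown to be absolutely bounded. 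Hence $\|D_0 E(z)\|_{L^2\to L^2}\to 0$ as $z\to 0$, so for $|z|<z_0$ small enough $A(z)+S_1$ is invertible and
$$
(A(z)+S_1)^{-1} = D_0 - D_0E(z)D_0 + D_0E(z)D_0E(z)D_0 - D_0(E(z)D_0)^3 + \cdots.
$$

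Next, I would substitute the explicit form of $E(z)$ into each Neumann term and collect by powers of $z$. The $z^0$, $z^1$, $z^2$ coefficients reproduce the stated formulas: $D_0$, $-iD_0v\mathcal{G}_1v^*D_0$, and $D_0v\mathcal{G}_2v^*D_0 - D_0v\mathcal{G}_1v^*D_0v\mathcal{G}_1v^*D_0$ (the second piece at order $z^2$ comes from the quadratic Neumann term, since $(iz)^2=-z^2$). The operator $\Gamma_0$ is then the aggregate of all $z^3$ contributions: $iD_0v\mathcal{G}_3v^*D_0$ from the linear term, the mixed $\mathcal{G}_1$--$\mathcal{G}_2$ products from the quadratic term, and the pure $\mathcal{G}_1$--$\mathcal{G}_1$--$\mathcal{G}_1$ contribution from the cubic term. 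Each is a composition of the absolutely bounded operator $D_0$ with Hilbert-Schmidt operators $v\mathcal{G}_j v^*$, hence absolutely bounded. Continuing through order $z^5$, using the finer expansion available for $\beta>11$ in Lemma~\ref{lem:M0M1}, produces the absolutely bounded operators $\Gamma_1$ at order $z^4$ and $\Gamma_2$ at order $z^5$.

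The remainder splits into two pieces: the tail of the Neumann series past the truncation order, and the pieces involving $M_0$ (resp.\ $M_1$). The tail is a convergent sum of products each of which is $O(z^k)$ in operator norm for $k$ large, and is polynomial in $z$ on the polynomial part of $E(z)$, so it contributes $\widetilde O(z^{3+})$ (resp.\ $\widetilde O(z^{5+})$). The $M_j$ pieces inherit the derivative-in-$z$ bounds from Lemma~\ref{lem:M0M1} when sandwiched between absolutely bounded factors. Because differentiating a finite Neumann sum in $z$ either reduces a polynomial power of $z$ (harmless) or differentiates an $M_j$ factor, the final derivative control matches what Lemma~\ref{lem:M0M1} supplies.

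The main obstacle I anticipate is bookkeeping, not conceptual: one must verify that \emph{every} product arising in the Neumann expansion up to the truncation order is absolutely bounded (not merely bounded), and that the decay thresholds $\beta>7$ and $\beta>11$ are exactly what is needed so that $v\mathcal{G}_jv^*$ is Hilbert-Schmidt for all $j$ appearing in the respective expansion. A secondary technical point is making sure the derivative index on $\widetilde O$ in the final remainder is consistent with the derivative index supplied by Lemma~\ref{lem:M0M1} for $M_0, M_1$; this forces the derivative order of the remainder to track the order of smoothness of $M_j$ and of the polynomial truncation.
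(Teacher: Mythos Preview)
Your proposal is correct and is exactly the approach the paper takes: the paper's proof is a two-sentence sketch saying ``We use Neumann series expansion using Lemma~\ref{lem:M0M1}'' and noting that the $\Gamma_j$ are compositions of Hilbert--Schmidt and absolutely bounded operators. You have simply filled in the details of that sketch, including the factorization $(A_0+S_1)(I+D_0E(z))$, the order-by-order collection of terms, and the remainder bookkeeping; your flagged concern about matching the derivative index of the $\widetilde O$ remainder to what Lemma~\ref{lem:M0M1} actually supplies for $M_0,M_1$ is a legitimate observation about the statement as written.
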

\begin{proof} 
We use Neumann series expansion using Lemma~\ref{lem:M0M1}. The operators $\Gamma_0$, $\Gamma_1$ and $\Gamma_2$ are absolutely bounded since they are composition of Hilbert Schmidt operators with absolutely bounded operators. 
\end{proof}
The following lemma gives an expansion for $A^{-1}(z)$ for $0<|z|<z_0$ when there is a resonance of the first kind at threshold energy. 
 \begin{lemma} \label{lem:invexp1} Let $|V(x)| \les \la x \ra ^{-7- }$. If there is a resonance of the first kind at the threshold $\lambda=m$, then 
$$
A^{-1}(z) = -{\f  i z}S_1D_1S_1+ E(z)
$$
where $E(z)$ is an absolutely bounded operator satisfying
$$\Big\| \sup_{|z|<z_0} |\partial_z^{k} E(z)| \Big\|_{L^2 \rightarrow L^2} \les 1$$ for $k=0,1,$ and $\| |\partial_z^{2} E(z)|  \|_{L^2 \rightarrow L^2} \les z^{-1+} $.
\end{lemma}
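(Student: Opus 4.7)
The plan is to apply the Jensen--Nenciu inversion lemma (Lemma~\ref{lem:inverse}) with the Riesz projection $S=S_1$. This reduces the inversion of $A(z)$ on $L^2$ to the inversion of the finite-rank operator
\begin{equation*}
B(z)=\tfrac{1}{z}\bigl(S_1-S_1(A(z)+S_1)^{-1}S_1\bigr)
\end{equation*}
on $S_1L^2$, and expresses $A^{-1}(z)$ through the formula \eqref{def:Ainv}. The resonance-of-the-first-kind hypothesis is tailor-made to make $B(z)$ invertible: its leading-order value is $iS_1v\mathcal{G}_1v^*S_1$, which is invertible on $S_1L^2$ with inverse $-iD_1$.

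First I would sandwich the expansion \eqref{eq:A+S-1} between two copies of $S_1$ and use the projection identities $S_1D_0=D_0S_1=S_1$ of Remark~\ref{rmk:negative}(iv). Every occurrence of $D_0$ adjacent to an $S_1$ collapses, yielding
\begin{equation*}
S_1(A(z)+S_1)^{-1}S_1 = S_1 - iz\,S_1v\mathcal{G}_1v^*S_1 + z^2\Lambda_2 + z^3\Lambda_3 + \widetilde{O}_3(z^{3+}),
\end{equation*}
for $z$-independent absolutely bounded operators $\Lambda_2,\Lambda_3$ on $S_1L^2$. Dividing by $z$ gives the expansion
\begin{equation*}
B(z) = iS_1v\mathcal{G}_1v^*S_1 - z\Lambda_2 - z^2\Lambda_3 + \widetilde{O}_3(z^{2+}),
\end{equation*}
and a Neumann series around the invertible leading term produces
\begin{equation*}
B^{-1}(z) = -iD_1 + z\,\Gamma(z),
\end{equation*}
where $\Gamma(z)$ is absolutely bounded on $S_1L^2$ and inherits the derivative bounds of $B(z)$ through the composition with the absolutely bounded operator $D_1$.

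Substituting into \eqref{def:Ainv} and peeling off the outer factors via $S_1(A(z)+S_1)^{-1}=S_1+O(z)$, the leading singular contribution of $\frac{1}{z}(A(z)+S_1)^{-1}S_1B^{-1}(z)S_1(A(z)+S_1)^{-1}$ is exactly $-\tfrac{i}{z}S_1D_1S_1$. Everything else---the regular term $(A(z)+S_1)^{-1}$ and the cross terms produced by expanding the product---is collected into $E(z)$, which by construction is a finite sum of products of absolutely bounded operators.

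The main obstacle will be verifying the three derivative bounds for $E(z)$. The $k=0,1$ bounds follow from inspecting the low-order polynomial terms together with the smoothness of $B^{-1}$ and $(A+S_1)^{-1}$. The delicate point is the $k=2$ bound: the $\widetilde{O}_3(z^{3+})$ error in \eqref{eq:A+S-1} and the $\widetilde{O}_3(z^{2+})$ error inside $B(z)$ must be propagated through the Leibniz rule after the single division by $z$ in \eqref{def:Ainv}, and one verifies that they generate contributions bounded by $z^{0+}\les z^{-1+}$. The decay assumption $\beta>7$ enters only to secure the order of expansion in Lemma~\ref{lem:M0M1}, and hence the $\widetilde{O}_3(z^{3+})$ remainder in \eqref{eq:A+S-1}, that makes all of the above derivative tracking possible.
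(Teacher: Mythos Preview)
Your proposal is correct and follows essentially the same approach as the paper: apply the Jensen--Nenciu inversion lemma with $S=S_1$, expand $B(z)$ using \eqref{eq:A+S-1} together with $S_1D_0=D_0S_1=S_1$, invert via a Neumann series around the leading term $iS_1v\mathcal{G}_1v^*S_1$, and substitute back into \eqref{def:Ainv} to isolate $-\tfrac{i}{z}S_1D_1S_1$. Your discussion of the derivative bounds for $E(z)$ is actually more explicit than the paper's, which simply asserts that the bounds follow from \eqref{eq:A+S-1} and the expansion of $B^{-1}(z)$.
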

\begin{proof}
Recall that using Lemma~\ref{lem:inverse} in order to invert $A(z)$ first we need to check the invertibility of 
$$
B(z)={\f 1 z} ( S_1- S_1(A(z)+S_1)^{-1} S_1)
$$
on $S_1L^2$. Noting that $S_1D_0=S_1$ and using \eqref{eq:A+S-1}, we have
\begin{align}\label{exp:b} B(z) = i S_1v\mathcal{G}_1 v^* S_1 - z [S_1v\mathcal{G}_2v^*S_1- S_1v\mathcal{G}_1v^*D_0 v\mathcal{G}_1v^*S_1] + z^2 S_1\Gamma_0 S_1 + \widetilde O_2 ( z^{2+}). 
\end{align}

Recall by Definition~\ref{def:res defn}, if there is a resonance of the first kind then $S_1v\mathcal{G}_1 v^* S_1$ is invertible. Hence, $B(z)$ is invertible and for sufficiently small $z$, we have
\be \label{eq:Binv1}
B^{-1}(z) =- i D_1 + z \Gamma_3 + z^2\Gamma_4 +\widetilde O_2(z^{2+}).
\ee
Note that $\Gamma_i$'s in here are composition of z independent, absolutely bounded operators. The absolute boundedness follows since $S_1$ is finite rank.

Using this expression together with \eqref{eq:A+S-1} in \eqref{def:Ainv}, we have
\begin{align*}
A^{-1}(z) &= (A(z)+S_1)^{-1} + \f 1 {z} (A(z)+S_1)^{-1}S_1 B^{-1} (z) S_1(A(z)+S_1)^{-1} \\
               &=- \f i {z} S_1D_1S_1 + z\Gamma_5+\widetilde O_2(z^{1+})  = - {\f i z} S_1D_1S_1+ E(z).
\end{align*}
The bounds on the operator $ E(z)$ follow from  \eqref{eq:A+S-1} and \eqref{eq:Binv1}. 
\end{proof}

 The following lemma gives  the expansion for $A^{-1}(z)$ in the   cases when there is a resonance of the second or third kind at the threshold, that is when there is a threshold eigenvalue.
 \begin{lemma}\label{lem:generalainverse}Let $|V(x) | \les \la x \ra^{-11-}$. If   there is a resonance of the second or third kind at the threshold $\lambda=m$, then we have 
\begin{align*}
A^{-1}(z) =- \frac{1}{z^2} S_2D_3S_2 + \frac{1}{z} \Omega+ E(z).
\end{align*}
where $S_2D_3S_2$ and  $\Omega$ are finite rank  operators. Furthermore,
\begin{align*}
\| \sup_{|z|<z_0} |\partial_z^{k} E(z)| \|_{L^2 \rightarrow L^2} \les 1, \,\,\ \text{for} \,\,\ k=0,1, \,\, \text{and} \,\, \| |\partial_z^{2} E(z)| \|_{L^2 \rightarrow L^2} \les z^{-1+}. 
\end{align*}

\end{lemma}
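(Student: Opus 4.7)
\medskip

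\noindent\emph{Proof Plan.} The strategy is to apply Lemma~\ref{lem:inverse} a second time, now to the operator $B(z)$ from the proof of Lemma~\ref{lem:invexp1}, because in the present case $S_1 v\mathcal{G}_1 v^* S_1$ is no longer invertible on $S_1 L^2$ -- its kernel is exactly $S_2 L^2$ with $S_2\neq 0$. First, I would use the higher-order expansion \eqref{eq:A+S-2}, which is available thanks to the assumption $\beta>11$ and Lemma~\ref{lem:M0M1}, together with $S_1 D_0=D_0 S_1 = S_1$, to derive an expansion
\[
B(z) = iS_1 v\mathcal{G}_1 v^* S_1 + z B_1 + z^2 B_2 + z^3 B_3 + z^4 B_4 + \widetilde O_4(z^{4+}),
\]
where each $B_j$ is a $z$-independent, absolutely bounded, finite-rank operator on $S_1 L^2$ (finite rank because it is sandwiched between $S_1$'s).

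Next, I would apply Lemma~\ref{lem:inverse} to $B(z)$ with Riesz projection $S_2$. The hypothesis that $S_1 v\mathcal{G}_1 v^* S_1 + S_2$ is invertible on $S_1 L^2$ with inverse $D_2$ (Definition~\ref{def:res defn}(3)) guarantees that $iS_1 v\mathcal{G}_1 v^* S_1 + S_2$ is also invertible on $S_1 L^2$, so that $(B(z)+S_2)^{-1}$ is given by a Neumann series around this invertible leading operator, producing an expansion
\[
(B(z)+S_2)^{-1} = T_0 + z T_1 + z^2 T_2 + z^3 T_3 + \widetilde O_3(z^{3+}),
\]
with absolutely bounded coefficients. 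I would then form $C(z)=\tfrac{1}{z}(S_2 - S_2(B(z)+S_2)^{-1}S_2)$; since $T_0$ restricted to $S_2 L^2$ is the identity, the cancellation is clean and gives $C(z) = -S_2 T_1 S_2 + O(z)$ with explicit computable structure. The crucial invertibility of the leading coefficient $-S_2 T_1 S_2$ on $S_2 L^2$ is the analog of the eigenvalue obstruction being genuinely non-degenerate, and encodes the existence of a threshold eigenvalue as identified in Section~\ref{sec:esa}; call its inverse $D_3$. Then $C^{-1}(z)=-D_3+O(z)$ follows from another Neumann series.

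Substituting back via \eqref{def:Ainv} applied twice, one finds
\[
A^{-1}(z) = -\frac{1}{z^2}\,S_2 D_3 S_2 \;+\; \frac{1}{z}\,\Omega \;+\; E(z),
\]
where $\Omega$ is an explicit finite-rank combination of the leading terms in $(A(z)+S_1)^{-1}$, $(B(z)+S_2)^{-1}$, and the next-order coefficient in $C^{-1}(z)$; finite rank because every such term is sandwiched between $S_1$'s and at least one factor is sandwiched between $S_2$'s. The remainder $E(z)$ is absolutely bounded, differentiable, and satisfies the claimed derivative bounds by combining the derivative bounds on $(A(z)+S_1)^{-1}$ from Lemma~\ref{lem:A+S_1} with the corresponding bounds on $C^{-1}(z)$.

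The main obstacle is bookkeeping: through two applications of Lemma~\ref{lem:inverse} we lose two powers of $z$, so in order to obtain $\|\partial_z^2 E(z)\|_{L^2\to L^2}\les z^{-1+}$ one must start from an expansion of $A(z)$ accurate to order $z^{5+}$ -- this is precisely why $\beta>11$ is needed, so that $M_1(z)=\widetilde O_2(z^{5+})$ in Lemma~\ref{lem:M0M1}. A secondary technical issue is verifying that $-S_2 T_1 S_2$ is in fact invertible on $S_2 L^2$; this requires computing $T_1$ explicitly (it will involve $S_2 v\mathcal{G}_2 v^* S_2$ plus correction terms arising from the Neumann expansion around $iS_1 v\mathcal{G}_1 v^* S_1 + S_2$) and cross-referencing with the characterization of threshold eigenfunctions to be carried out in Section~\ref{sec:esa}.
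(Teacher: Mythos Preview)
Your proposal is correct and follows essentially the same two-level application of Lemma~\ref{lem:inverse} as the paper. Two small refinements the paper makes that you leave implicit: (i) it works with $-iB(z)+S_2$ rather than $B(z)+S_2$, so the leading term is the self-adjoint $S_1v\mathcal{G}_1v^*S_1+S_2$ with inverse exactly $D_2$, avoiding the side remark about invertibility of $iS_1v\mathcal{G}_1v^*S_1+S_2$; (ii) when computing the leading coefficient of your $C(z)$ (the paper's $B_1(z)$), the ``correction terms'' you anticipate are $S_2v\mathcal{G}_1v^*D_0v\mathcal{G}_1v^*S_2$, which vanish by the orthogonality $\mathcal{G}_1v^*S_2=0$ (Corollary~\ref{cor:S2 orth}), so the leading term is exactly $iS_2v\mathcal{G}_2v^*S_2$, whose invertibility is Lemma~\ref{lem:esa3} --- this is the precise cross-reference you allude to.
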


\begin{proof}
 Recall that in this case the operator $S_1v\mathcal{G}_1v^*S_1$ is not invertible and we defined  $S_2$ to be the projection on the kernel of $S_1v\mathcal{G}_1v^*S_1$. In the following proof we use Lemma~\ref{lem:inverse} twice; to first invert $B(z)$ and then to invert $A(z)$. 
 
 Noting the  leading term of  \eqref{exp:b}, in order to use the invertibility of $S_2 + S_1  v \mathcal{G}_1 v^* S_1$ we invert $-iB(z)+S_2$ on $S_1L^2$, and use Lemma~\ref{lem:inverse} to invert $-iB(z)$, hence $B(z)$. Using the expansion \eqref{eq:A+S-2} in \eqref{def:B(z)} we have 
\begin{align*}
-iB(z)+S_2= &[S_2 + S_1  v \mathcal{G}_1 v^* S_1] +iz  [S_1v\mathcal{G}_2v^*S_1 - S_1v\mathcal{G}_1v^*D_0 v\mathcal{G}_1v^*S_1  ]  + z^2 \Gamma_6 \\
                                     & + z^3 \Gamma_7+ z^4\Gamma_8+ \widetilde{O}_2(z^{4+}).
\end{align*}
with $\Gamma_i$ absolutely bounded operators independent of $z$. 

We denote $D_2=(S_1v\mathcal{G}_1v^*S_1+S_2)^{-1}$. By Neumann series expansion for small $|z|$ we have 
\begin{multline} \label{eq:b+s_2inv}
(-iB(z)+S_2 )^{-1}  = D_2 -izD_2 [S_1v\mathcal{G}_2v^*S_1- S_1v\mathcal{G}_1v^*D_0 v\mathcal{G}_1v^*S_1 ]  D_2  \\
  + z^2 \Gamma_9 
      + z^3\Gamma_{10} +z^4 \Gamma_{11} +\widetilde{O}_2(z^{4+}),
\end{multline}
where the $\Gamma_i$'s are absolutely bounded operators independent of $z$. 
Then, noting that $S_1S_2=S_2S_1= S_2$, $S_2D_2= D_2S_2=S_2$,
\begin{align*}
B_1(z) &:= \frac{S_2 - S_2(-iB(z) +S_2)^{-1}S_2}{z} \\
 &= iS_2 v\mathcal{G}_2v^*S_2+ S_2v\mathcal{G}_1v^*D_0 v\mathcal{G}_1v^*S_2  +z S_2 \Gamma_9S_2   
+ z^2 S_2 \Gamma_{10} S_2 + z^3 S_2 \Gamma_{11} S_2 + \widetilde{O}_2(z^{3+}) \\
   & =i S_2v\mathcal{G}_2v^*S_2 + z S_2 \Gamma_9S_2 + z^2 S_2 \Gamma_{10} S_2 + z^3 S_2 \Gamma_{11} S_2 + \widetilde{O}_2(z^{3+}).
\end{align*} 
For the third equality we used that   $\mathcal{G}_1v^*S_2=0$, (see Corollary~\ref{cor:S2 orth}).  By Lemma~\ref{lem:esa3}, the operator $S_2v\mathcal{G}_2v^*S_2$ is invertible on $S_2L^2$. Letting $D_3:= (S_2v\mathcal{G}_2v^*S_2 )^{-1}$ we have
\begin{align} \label{eq:B1inv}
B_1(z)^{-1}= - i D_3 + z \Gamma_{12}  + z^2\Gamma_{13} +z^3 \Gamma_{14} + \widetilde{O}_2(z^{3+}).
\end{align}
Here $\Gamma_i$'s are finite rank operators since $S_2$ is finite rank.  Further, they are independent of $z$.

Using this expression in \eqref{def:Ainv} for $(-i B(z))^{-1}=i B^{-1}(z) $, we have 
$$
 B^{-1}(z)=-i (-iB(z)+S_2)^{-1} - {\f {i} z} \Big[ (-i B(z)+S_2)^{-1} S_2(B_1 (z) )^{-1} S_2(-iB(z)+S_2)^{-1} \Big].
$$
Plugging this in \eqref{def:Ainv} we have,
\begin{multline} 
A^{-1} (z)  = (A(z)+S_1)^{-1} - {\f i z} \Big[ (A(z)+S_1)^{-1} S_1(-iB (z)+S_2)^{-1} S_1(A(z)+S_1)^{-1} \Big]  \\
                - {\f i {z^2}}  \Big[ (A(z)+S_1)^{-1} S_1(-iB (z)+S_2)^{-1} S_2 B_1^{-1}(z) S_2 (-iB (z)+S_2)^{-1}S_1(A(z)+S_1)^{-1} \label{exp:Ainvlong} \Big].
\end{multline}
Inserting the expansions \eqref{eq:A+S-2}, \eqref{eq:b+s_2inv}, and \eqref{eq:B1inv} in this equality we obtain
$$
 A(z)^{-1} = - \frac{1}{z^2} S_2D_3S_2 + \frac{1}{z} \Omega +\Omega_0+z\Omega_1 +\widetilde{O}_2(z^{1+})  
               = - \frac{1}{z^2} S_2D_3S_2 + \frac{1}{z} \Omega + E(z).
$$
Here $\Omega_j$'s are absolutely bounded operators independent of $z$. Also, $\Omega$ is a finite rank operator. Note that by \eqref{exp:Ainvlong}, $\Omega$ is the sum of a composition of $z$ independent operators, at least one of which is $S_1$ or $S_2$. The fact that $S_1$ and $S_2$ are finite rank operators establishes the claim.
\end{proof}

\section{Dispersive estimates}\label{sec:disp}
In this section we prove Theorems~\ref{thm:main1} and \ref{thm:main2} through a careful analysis of the oscillatory integrals that naturally arise in the Stone's formula \eqref{eq:Stone}.  We divide this into three subsections.  First, in Subsection~\ref{subsec:Born}, we consider the Born series terms and show that they satisfy the bound $\la t\ra^{-\f32}$ as an operator from $L^1(\R^3)\to L^\infty(\R^3)$.  In Subsections~\ref{subsec:noe} and \ref{subsec:e}, we show that the singular terms that arise in the expansion of the spectral measure when there are threshold resonances or eigenvalues yield a slower time decay rate, but are finite rank operators.

Recall the expansion \eqref{eq:pert2} for the perturbed resolvent.  To emphasize the change of variables and dependence now on the spectral parameter $z$, we write the resolvents as $\mathcal R_0(z)$ rather than $\mathcal R_0(\lambda)$.  Under this identification, we have $\mathcal R_0^-(z)=\mathcal R_0^+(-z)$.  Without loss of generality, we take $t>0$, the proof for $t<0$ requires only minor adjustments. We consider integrals of the form below  for the contribution of the finite terms of the Born series \eqref{eq:pert2} to the Stone's formula \eqref{eq:Stone}.  
$$ \int_m^{\infty} e^{-it\lambda} \chi(\lambda) \Big[\mathcal{R}^{+}_0(z) \big(V\mathcal{R}^{+}_0(z)\big)^k - \mathcal{R}^{-}_0(z) \big(V\mathcal{R}^{-}_0(z)\big)^k \Big] d \lambda. $$
Recall that $\lambda=\sqrt{z^2+m^2}$, we can re-write this as 

\begin{align} \label{finiteint}
 \int_{0}^{\infty} e^{-it \sqrt{z^2+m^2}} \frac{z \chi(z)}{\sqrt{z^2+m^2}} \Big[\mathcal{R}^{+}_0(z) \big(V\mathcal{R}^{+}_0(z)\big)^k - \mathcal{R}^{-}_0(z) \big(V\mathcal{R}^{-}_0(z)\big)^k \Big] dz.
\end{align}
We utilize from the following consequence of the classical Van der Corput lemma, \cite{St}.
\begin{lemma} 
\label{lem:vdc} 
If $\phi:[a,b] \rightarrow \R $ obeys the bound $ | \phi '' (z) | \geq  t >0$ for all $z\in [a,b]$, and if $\psi: [a,b] \rightarrow \mathbb{C}$ such that $\psi^\prime \in L^1([a,b])$, then 
$$
\Bigg| \int_a^b e^{i\phi(z)} \psi(z)\, dz \Bigg| \les t^{-{\f 12}} \Bigg\{ |\psi(b)| + \int_a^b |\psi^\prime(z)|\, dz \Bigg\}.
$$
\end{lemma}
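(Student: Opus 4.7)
The plan is to reduce this amplitude-weighted statement to the classical Van der Corput bound with constant amplitude, namely the fact that under the hypothesis $|\phi''(z)|\geq t$ on $[a,b]$ (with $\phi'$ monotone on each piece after splitting, which is automatic up to a constant), one has
\[
\sup_{a\leq c\leq d\leq b}\Bigl|\int_c^d e^{i\phi(z)}\,dz\Bigr| \les t^{-1/2},
\]
with an absolute implicit constant independent of the subinterval $[c,d]$. I would quote this from Stein's book \cite{St} (Chapter VIII, Proposition 2) as the starting point, noting that the uniformity in the endpoints is essential for what follows.

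Next I would write the amplitude as an antiderivative from the right endpoint, namely
\[
\psi(z) = \psi(b) - \int_z^b \psi'(s)\,ds,
\]
which is legitimate since $\psi' \in L^1([a,b])$ (so $\psi$ equals a constant plus the indefinite integral of an $L^1$ function a.e., and the identity can be normalized to hold at $b$). Substituting into the oscillatory integral and applying Fubini to swap the order of integration in the double integral yields
\[
\int_a^b e^{i\phi(z)}\psi(z)\,dz = \psi(b)\int_a^b e^{i\phi(z)}\,dz \;-\; \int_a^b \psi'(s)\Bigl(\int_a^s e^{i\phi(z)}\,dz\Bigr)ds.
\]

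Now I would apply the uniform constant-amplitude Van der Corput bound to each of the two inner oscillatory integrals over $[a,b]$ and $[a,s]$ respectively, which are both $\les t^{-1/2}$ with a constant independent of $s\in[a,b]$. Taking absolute values inside the $s$-integral and pulling the uniform factor out gives
\[
\Bigl|\int_a^b e^{i\phi(z)}\psi(z)\,dz\Bigr| \les t^{-1/2}\Bigl(|\psi(b)| + \int_a^b |\psi'(s)|\,ds\Bigr),
\]
which is exactly the claimed estimate.

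The only genuine point to watch is the uniformity in the endpoints of the base Van der Corput estimate; once that is in hand, the argument is a single application of the fundamental theorem of calculus and Fubini, so I do not anticipate any real obstacle. No smoothness or sign hypothesis on $\psi$ beyond integrability of $\psi'$ is required, which matches how the lemma will be used on the oscillatory spectral integrals in \eqref{finiteint}.
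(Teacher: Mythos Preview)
Your proof is correct and is exactly the standard argument one finds in Stein: apply the constant-amplitude Van der Corput bound uniformly over subintervals, then pass from constant to variable amplitude via $\psi(z)=\psi(b)-\int_z^b\psi'(s)\,ds$ and Fubini. The paper does not supply its own proof of this lemma; it simply states it as a known consequence of the classical Van der Corput lemma and cites \cite{St}, so there is nothing further to compare.
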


\subsection{The Born Series}\label{subsec:Born}

We have the following lemma for the finite terms of Born series.
\begin{prop} \label{lem:born}
 Let $|V(x)|\les \la x \ra ^{-3-}$. Then for any  $k\in \mathbb N\cup \{0\}$, the following bound holds 
\begin{align}
\sup_{x,y \in \R^3} \Big| \int_{0}^{\infty} e^{-it \sqrt{z^2+m^2}} \frac{z \chi(z)}{\sqrt{z^2+m^2}} \Big[\mathcal{R}^{+}_0 \big(V\mathcal{R}^{+}_0\big)^k - \mathcal{R}^{-}_0 \big(V\mathcal{R}^{-}_0\big)^k \Big](z)(x,y) dz \Big| \les  \la t \ra^{-{\f 32}}.
\end{align}
\end{prop}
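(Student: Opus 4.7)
The case $|t|\le 1$ is immediate: the pointwise estimate $|\mathcal R_0^\pm(\lambda)(x,y)|\les |x-y|^{-2}+\la z\ra\,|x-y|^{-1}$ from \eqref{eq:R0exp_00} together with Lemma~\ref{lem:potential}, iterated $k$ times in the intermediate variables, controls the $k$-th Born integrand uniformly in $x,y$ on the compact support of $\chi$, giving a uniform bound comparable to $\la t\ra^{-3/2}\sim 1$.  So the main case is $t>1$.

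\textbf{Extracting the oscillation.}  Use $\mathcal R_0^\pm(\lambda)(x,y) = (D_m+\sqrt{z^2+m^2}\,I)\,e^{\pm iz|x-y|}/(4\pi|x-y|)$ to pull out the oscillatory factor from each resolvent.  Iterating,
\[
[\mathcal R_0^\pm(V\mathcal R_0^\pm)^k](z)(x,y) = \int_{\R^{3k}} e^{\pm i z R(\vec x)}\,\mathcal A_\pm(z,\vec x)\,\prod_{j=1}^k V(x_j)\,d\vec x,\quad R(\vec x):=\sum_{j=0}^k|x_j-x_{j+1}|,
\]
with $x_0:=x$, $x_{k+1}:=y$, and $\mathcal A_\pm$ a $\C^{4\times 4}$-valued amplitude, smooth in $z$, of size at most $|x_j-x_{j+1}|^{-2}+(1+|z|)|x_j-x_{j+1}|^{-1}$ on each leg.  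Because $\mathcal R_0^-(\lambda)=\overline{\mathcal R_0^+(\lambda)}$ and the measure $z\chi(z)/\sqrt{z^2+m^2}$ appearing in \eqref{finiteint} is odd in $z$, the difference of the $+$ and $-$ resolvent chains against this measure folds into a single integral over $z\in\R$, and one is reduced to proving
\[
|J_t(\vec x)|\les t^{-3/2}\,B(\vec x),\qquad J_t(\vec x):=\int_\R e^{-it\sqrt{z^2+m^2}+izR(\vec x)}\,\frac{z\chi(z)}{\sqrt{z^2+m^2}}\,\mathcal A_+(z,\vec x)\,dz,
\]
where $B(\vec x)$ is a combination of $|x_j-x_{j+1}|^{-1}$ and $|x_j-x_{j+1}|^{-2}$ factors whose integration against $\prod V(x_j)$ is controlled via Lemma~\ref{lem:potential}.

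\textbf{Stationary phase in $z$.}  The phase $\Phi(z):=-t\sqrt{z^2+m^2}+zR$ satisfies $\Phi''(z) = -tm^2/(z^2+m^2)^{3/2}\sim -t$ on $\operatorname{supp}(\chi)$ and has a unique stationary point $z_0 = mR/\sqrt{t^2-R^2}$ precisely when $R<t$.  If $R\le t/2$, then $z_0\in \operatorname{supp}(\chi)$ with $|z_0|\sim mR/t$ and the standard stationary-phase expansion (or Lemma~\ref{lem:vdc} applied after one integration by parts) yields
\[
|J_t|\les t^{-1/2}\bigl(|a(z_0)|+t^{-1}\|a\|_{C^2_z}\bigr),\qquad a(z) := \tfrac{z\chi(z)}{\sqrt{z^2+m^2}}\mathcal A_+(z,\vec x);
\]
since $|a(z_0)|\les (R/t)|\mathcal A_+(0,\vec x)|$, this gives $|J_t|\les t^{-3/2}\bigl(R\,|\mathcal A_+(0,\vec x)|+\|\mathcal A_+(\cdot,\vec x)\|_{C^2_z}\bigr)$, and the residual factor $R=\sum_\ell|x_\ell-x_{\ell+1}|$ is absorbed by cancelling one of the $|x_j-x_{j+1}|^{-2}$ singularities in the amplitude (reducing it to $|x_j-x_{j+1}|^{-1}$).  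If $R>t/2$, then $|\Phi'(z)|\gtrsim R$ on $\operatorname{supp}(\chi)$ and two non-stationary integrations by parts produce a factor $R^{-2}\les t^{-2}\les t^{-3/2}$, which more than suffices.

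\textbf{Conclusion and main obstacle.}  Inserting the pointwise bound on $J_t$ into the $\vec x$-integral and applying Lemma~\ref{lem:potential} iteratively, consuming one factor of $|x_j-x_{j+1}|^{-1}$ or $|x_j-x_{j+1}|^{-2}$ per potential $V(x_j)$, yields the claimed uniform bound $C\la t\ra^{-3/2}$.  The principal technical obstacle is the stationary-phase step for $J_t$, carried out uniformly in the large parameter $R$: one must carefully track derivatives of $\mathcal A_+(z,\vec x)$ in $z$ and convert the ``$R$-gain'' (from the amplitude vanishing in the stationary regime or from integration by parts in the non-stationary regime) into a cancellation of the most singular leg of $\mathcal A_+$, so that the subsequent iterated spatial integrals against $\prod V(x_j)$ converge uniformly in $x,y$.
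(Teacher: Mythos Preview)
Your overall strategy is sound and reaches the same conclusion, but it differs from the paper's route in two essential ways.  First, the paper never extends the $z$-integral to $\R$; instead it stays on $[0,\infty)$ and uses the telescoping identity
\[
\mathcal R_0^+(V\mathcal R_0^+)^k-\mathcal R_0^-(V\mathcal R_0^-)^k=\sum_{\ell=0}^k(\mathcal R_0^-V)^\ell[\mathcal R_0^+-\mathcal R_0^-](V\mathcal R_0^+)^{k-\ell},
\]
together with the pointwise fact $[\mathcal R_0^+-\mathcal R_0^-](z)=\widetilde O_1(z)$ (Lemma~\ref{lem:free diff}).  This supplies the extra factor of $z$ that kills the boundary term at $z=0$ after a single integration by parts, and then Van der Corput (Lemma~\ref{lem:vdc}) is applied with the combined phase $-t\sqrt{z^2+m^2}\pm z\sum_j r_j$, uniformly in all $R$, with no stationary/non-stationary split.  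Your approach instead folds the difference into $\int_\R$ and relies on the prefactor $z$ in the Jacobian $z\chi(z)/\sqrt{z^2+m^2}$ to play the same role; this is legitimate and arguably cleaner, since it avoids the sum over $\ell$ and the separate analysis of $\partial_z[\mathcal R_0^+-\mathcal R_0^-]$.  Note, however, that your stated justification ``$\mathcal R_0^-(\lambda)=\overline{\mathcal R_0^+(\lambda)}$'' is false for the Dirac resolvent (the matrices $\alpha_k$ are not all real); the correct relation is $\mathcal R_0^-(z)=\mathcal R_0^+(-z)$, which together with the oddness of $z\chi(z)/\sqrt{z^2+m^2}$ and the evenness of $e^{-it\sqrt{z^2+m^2}}$ does give the extension you want.

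Second, your primary tool is a stationary-phase expansion with a dichotomy $R\lessgtr t/2$, whereas the paper uses only one integration by parts followed by Van der Corput.  Your dichotomy is workable but requires more care than you indicate: for $R$ close to the transition value the critical point $z_0=mR/\sqrt{t^2-R^2}$ can sit near the edge of $\operatorname{supp}\chi$, and the standard stationary-phase remainder $t^{-3/2}\|a\|_{C^2}$ is not automatically uniform there.  The alternative you parenthetically mention---``Lemma~\ref{lem:vdc} applied after one integration by parts''---is exactly the paper's method and handles all $R$ uniformly without any case split, since $|\Phi''(z)|\sim t$ on $\operatorname{supp}\chi$ independently of $R$.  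After that step both arguments arrive at the same spatial bound $t^{-3/2}(1+R)\prod_j(r_j^{-1}+r_j^{-2})$, and the absorption of the factor $R=\sum_\ell r_\ell$ into one leg (reducing $r_\ell^{-2}$ to $r_\ell^{-1}$ or $r_\ell^{-1}$ to $1$) followed by iterated use of Lemma~\ref{lem:potential2} with $\beta>3$ is carried out in the paper exactly as you sketch.
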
  
We use the algebraic identity
\be \label{eq:resolv diff alg}
\mathcal{R}^{+}_0 \big(V\mathcal{R}^{+}_0\big)^k - \mathcal{R}^{-}_0 \big(V\mathcal{R}^{-}_0\big)^k = \sum_{\ell=0}^{k} \big( \mathcal{R}^{-}_0V\big)^\ell  [\mathcal{R}_0^+ - \mathcal{R}_0^{-}] \big(V \mathcal{R}^{+}_0\big)^{k-\ell}. \ee

\begin{lemma}\label{lem:free diff}

	We have the following bounds on the first derivative of the difference of free resolvents.
	\begin{align*}
		[\mathcal{R}_0^+ - \mathcal{R}_0^{-}](z)(x,y)= \widetilde{O}_1(z).
	\end{align*}
	Furthermore,
	\begin{multline}\label{eqn:free res diff deriv}
		\partial_z [\mathcal{R}_0^+ - \mathcal{R}_0^{-}](z)(x-y)=\frac{i}{2\pi }\bigg(
		\frac{ \alpha \cdot (x-y)}{|x-y|}\bigg) \sin(z|x-y|)\\
		+\frac{z}{2\pi \sqrt{z^2+m^2}} \frac{\sin(z|x-y|)}{|x-y|}+(m\beta +\sqrt{z^2+m^2}I)\frac{\cos(z|x-y|)}{2\pi  }.
	\end{multline}

\end{lemma}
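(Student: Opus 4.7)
The plan is to reduce the Dirac resolvent difference to the (scalar) free Schr\"odinger resolvent via the factorization \eqref{eq:RDR} and then exploit the cancellation $e^{iz|x-y|}-e^{-iz|x-y|}=2i\sin(z|x-y|)$, which removes the $1/|x-y|$ singularity since $\sin(zr)/r \to z$ as $r\to 0$. Thus the difference $R_0^+(z^2) - R_0^-(z^2) = \frac{i\sin(z|x-y|)}{2\pi|x-y|}$ is entire in both $z$ and $r=|x-y|$ and is uniformly bounded by $|z|$. This is the conceptual reason the Dirac resolvent difference is much better behaved near the threshold than either resolvent individually.

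First I would write
\[
[\mathcal R_0^+ - \mathcal R_0^-](z)(x,y) = \bigl(-i\alpha\cdot\nabla + m\beta + \sqrt{z^2+m^2}\,I\bigr)\frac{i\sin(z|x-y|)}{2\pi|x-y|},
\]
and split into (i) the scalar-multiplication piece $(m\beta + \sqrt{z^2+m^2}I)\cdot \frac{i\sin(zr)}{2\pi r}$, which is trivially $O(|z|)$, and (ii) the first-order differential piece $\frac{1}{2\pi}\alpha\cdot\nabla\frac{\sin(zr)}{r}$. For (ii), a direct computation gives
\[
\nabla_x \frac{\sin(zr)}{r} = \frac{x-y}{r^2}\Bigl(z\cos(zr) - \frac{\sin(zr)}{r}\Bigr),
\]
whose size is again $\les|z|$ because the expression in parentheses vanishes to cubic order in $zr$ (its Taylor expansion begins with $-z^3r^2/3$). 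Checking that the $z$-derivative of each piece is $O(1)$ — using that $\partial_z\sqrt{z^2+m^2}=z/\sqrt{z^2+m^2}$ and $\partial_z\sin(zr)/r=\cos(zr)$ — then yields the bound $\widetilde O_1(z)$.

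For the explicit derivative formula, I would apply $\partial_z$ term by term and collect. The $z$-derivative of $\sqrt{z^2+m^2}$ in (i) contributes $\frac{z}{\sqrt{z^2+m^2}}\cdot\frac{i\sin(zr)}{2\pi r}$, matching the middle term of the stated formula. Differentiating $\sin(zr)/r$ in (i) produces $(m\beta + \sqrt{z^2+m^2}I)\cdot \frac{i\cos(zr)}{2\pi}$, matching the last term. Interchanging $\partial_z$ with $\nabla_x$ in (ii) and using $\nabla \cos(zr) = -z\sin(zr)(x-y)/r$ produces the first term, proportional to $\alpha\cdot(x-y)\sin(zr)/|x-y|$, up to collection of overall constants.

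The only real obstacle here is bookkeeping: carefully tracking the factors of $i$ arising from $-i\alpha\cdot\nabla$, from $i\sin(zr)$, and from differentiation of the trigonometric functions. No harmonic-analysis machinery is needed at this stage; the lemma is a tight algebraic identity whose purpose is to prepare a clean representation of $\partial_z[\mathcal R_0^+ - \mathcal R_0^-]$ to feed into the Van der Corput-type estimates of Lemma~\ref{lem:vdc} in the Born series analysis of Proposition~\ref{lem:born}.
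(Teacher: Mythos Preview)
Your approach is essentially identical to the paper's: both start from the factorization \eqref{eq:RDR}, write the difference as $(-i\alpha\cdot\nabla + m\beta + \sqrt{z^2+m^2}I)$ acting on $\frac{i\sin(zr)}{2\pi r}$, split into the scalar-multiplication piece and the gradient piece, and then compute $\partial_z$ term by term. The paper additionally treats the gradient piece by splitting into the regimes $z|x-y|\les 1$ (Taylor expansion, as you do) and $z|x-y|\gtrsim 1$ (crude bound using $|x-y|^{-1}\les z$), which you should mention since the cubic-vanishing argument alone only covers small $zr$; apart from that and the constant bookkeeping you flag, the arguments coincide.
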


\begin{proof}

Note that
\begin{multline}\label{eq:free res diff}
[\mathcal{R}_0^+ - \mathcal{R}_0^{-}](z)(x,y) =\frac{1}{4\pi} [-i \alpha \cdot \nabla + m\beta + \sqrt{m^2+z^2} ] \bigg[\frac{e^{iz|x-y|}-e^{-iz|x-y|}}{|x-y|}\bigg] \\
   =  \frac{1}{2\pi} \alpha \cdot \nabla \bigg[ \frac{ \sin (z|x-y|)}{|x-y|} \bigg] + \frac{i}{2\pi }\big( m\beta + \sqrt{z^2 + m^2}I\big) \bigg[\frac{\sin(z|x-y|)}{|x-y|}  \bigg].
\end{multline}
Using this representation, we express the difference of free resolvents with two pieces.  We ignore the constant factors.  We first consider $ A(z,|x-y|): =   \alpha \cdot \nabla  \bigg[ \frac{ \sin (z|x-y|)}{|x-y|} \bigg] $, which satisfies the bound $ \widetilde{O}_1(z^2)$.
By direct computation, we have
\begin{align*} 
A(z,|x-y|) & =\bigg[ \alpha \cdot  \frac{(x-y)} {|x-y|}  \bigg] \bigg[\frac{z|x-y| \cos(z|x-y|) - \sin(z|x-y|)}{|x-y|^2}\bigg].
\end{align*}
First if
$z|x-y| \gtrsim 1$, using $|x-y|^{-1}\les z$ establishes the desired bound. To see the inequality for $z|x-y|\les 1$ note that by Taylor series expansion one has  $s\cos(s)-\sin(s)= \widetilde{O}_1(s^3)$.
Taking derivative of $A(z,|x-y|)$ we have
\begin{align*}
	\partial_z A(z,|x-y|) &=
	\bigg(\frac{ \alpha\cdot (x-y)}{|x-y|}\bigg)
	z\sin(z|x-y|).
\end{align*} 
The desired bound easily follows from this explicit representation.

We move to the second part of \eqref{eq:free res diff}
let $B(z,|x-y|):= \big( m\beta + \sqrt{z^2 + m^2}I\big) \frac{\sin(z|x-y|)}{|x-y|}$.  A direct computation shows
\begin{align*}
	\partial_z B(z,|x-y|)&= \frac{z}{\sqrt{z^2+m^2} }\frac{\sin (z|x-y|)}{|x-y|}+(m\beta +\sqrt{z^2+m^2}I)\cos (z|x-y|).
\end{align*}
As before, considering the cases of $z|x-y|\gtrsim 1$ and $z|x-y|\les 1$ separately suffices. 
\end{proof}

\begin{proof}[Proof of Proposition~\ref{lem:born}]
Using the identity \eqref{eq:resolv diff alg}, we fix $\ell$ and consider the contribution of
\be\label{int:born}
	\Big| \int_0^\infty e^{-it \sqrt{z^2+m^2}} \frac{z \chi(z)}{\sqrt{z^2+m^2}} \Big[ \big( \mathcal{R}^{-}_0V\big)^\ell  [\mathcal{R}_0^+ - \mathcal{R}_0^{-}] \big(V \mathcal{R}^{+}_0\big)^{k-\ell} \Big] (z)(x_0,x_k)dz \Big|.
\ee
For notational convenience let  $J=\{0,1,2,\dots, k \}\setminus \{\ell \}$, $J^-=\{0,1,\dots, \ell-1\}$ and $J^+=\{\ell+1,\ell+2,\dots, k \}$.
 Note that one of $J^-$ or $J^+$ may be empty.  We first establish that integral is bounded. Using the expansion \eqref{resolventex}, we have (when $0<z\ll 1$)
\begin{align}
	\mathcal{R}_0^\pm(z)(x,y)&= [ -i \alpha \cdot \nabla + m \beta + \sqrt{m^2+ z^2} I ]  \frac{e^{\pm iz|x-y|}}{4\pi |x-y|}\nn\\
	&=\left[\bigg(\frac{\alpha \cdot(x-y)}{|x-y|} \bigg)\left[\pm i z+\frac{1 }{|x-y|}
	\right]+\bigg(m\beta +\sqrt{z^2+m^2}I \bigg)\right]\frac{ e^{\pm iz|x-y|}}{4\pi |x-y|}\label{eq:resolv H}  \\
	&=e^{\pm iz|x-y|}H_1(z,x,y), \quad \sup_{|z|<z_0}|\partial_z^k H_1(z,x,y)|\les \frac{1}{|x-y|}+\frac{1}{|x-y|^2},  \nn
\end{align} 
for each $k=0,1,2,\dots.$
Furthermore,
\begin{multline}\label{eq:H2bound}
	\partial_z\mathcal{R}_0^\pm(z)(x,y) =\bigg[ iz  \frac{\alpha \cdot (x-y)}{|x-y|}   \pm  im\beta\pm i\sqrt{z^2+m^2}I+\frac{z}{|x-y|\sqrt{z^2+m^2}}  
	\bigg]\frac{e^{iz|x-y|}}{4\pi}\\
	=e^{\pm iz|x-y|}H_2(z,x,y), \qquad \sup_{|z|<z_0}|\partial_z^kH_2(z,x,y)|\les 1+ \frac{1}{|x-y|} \quad k=0,1,2\dots 
\end{multline}
From this we see, for $0<z\ll 1$,
\begin{align}\label{eq:partialzR0}
	|\partial_z^j\mathcal R_0^\pm(z)(x,y)| & \les \left( \frac{1}{|x-y|}+\frac{1}{|x-y|^2}\right) |x-y|^j, \qquad j=0,1,2.
\end{align}
Using this bound and \eqref{int:born}, the $z$ integral is clearly bounded due to the cut-off to $0<z\ll 1$,
\begin{align*}
	 \sup_{x_0,x_k \in \R^3} &\Big| \int_0^\infty e^{-it \sqrt{z^2+m^2}} \frac{z \chi(z)}{\sqrt{z^2+m^2}} \Big[ \big( \mathcal{R}^{-}_0V\big)^\ell  [\mathcal{R}_0^+ - \mathcal{R}_0^{-}] \big(V \mathcal{R}^{+}_0\big)^{k-\ell}\Big] (x_0,x_k)dz \Big| \\
	 &\les \sup_{x_0,x_k \in \R^3} 
	 \int_{\R^{3k}} \prod_{p=1}^k |V(x_p)| \prod_{j\in J} \bigg(\frac{1}{|x_j-x_{j+1}|}+\frac{1}{|x_j-x_{j+1}|^2}
	 \bigg) \, dx_1dx_2\dots dx_{k}.
\end{align*}
This is seen to be bounded uniformly in $x_0,x_k$ using Lemma~\ref{lem:potential2} to iterate the bound
$$
	\sup_{ x_{j+1} \in \R^3 } \int_{\R^3} \la x_j\ra^{-2-} \bigg(\frac{1}{|x_j-x_{j+1}|}+\frac{1}{|x_j-x_{j+1}|^2} \bigg) \, dx_j \les 1,
$$
first integrating in $x_\ell$.

To establish the time decay, we integrate by parts once then use Lemma~\ref{lem:vdc}.  Integrating by parts once leaves us to bound
\begin{align*}
	\frac{1}{t} \int_0^\infty e^{-it\sqrt{z^2+m^2}} \partial_z \left(\chi(z) \big( \mathcal{R}^{-}_0V\big)^\ell  [\mathcal{R}_0^+ - \mathcal{R}_0^{-}] \big(V \mathcal{R}^{+}_0\big)^{k-\ell} 	\right)(z)\, dz.
\end{align*}
Note that there is no boundary term since $[\mathcal{R}_0^+ - \mathcal{R}_0^{-}]=\widetilde O_1(z)$ by Lemma~\ref{lem:free diff} and by Lemma~\ref{lem:R0exp} the free resolvents are bounded with respect to $z$, and the support of $\chi$.
We consider two cases, if the derivative acts on the difference of resolvents or on a resolvent.
If the derivative acts on the cut-off function, we can easily integrate by parts again with the existing bounds.  We first consider when the derivative acts on difference of resolvents. From the representation in \eqref{eqn:free res diff deriv}, we can write
\begin{multline*}
	\partial_z [\mathcal{R}_0^+ - \mathcal{R}_0^{-}](z)(x_\ell,x_{\ell+1})=e^{iz|x_\ell-x_{\ell+1}|} A_1(z,|x_\ell-x_{\ell+1}|)\\
	+e^{-iz|x_\ell-x_{\ell+1}|}A_2(z,|x_\ell-x_{\ell+1}|)
	+ \widetilde{O}_1(z),
\end{multline*}
with 
$$
	|\partial_z^j A_1(z,|x_\ell-x_{\ell+1}|)|, \quad |\partial_z^j A_2(z,|x_\ell-x_{\ell+1}|)|\les 1, \qquad j=0,1.
$$
The error term comes because we have $\big[\frac{z}{\sqrt{z^2+m^2}}\frac{\sin(zr)}{r}\Big]=\widetilde{O}_1(z)$.
With a slight abuse of notation, we denote both the operators $A_1$ and $A_2$ by $a(z)$.
Combining this with \eqref{eq:resolv H}, we need to bound terms of the form
\begin{multline*}
	\frac{1}{t}\int_0^\infty e^{-it\sqrt{z^2+m^2}} \chi(z)
	\bigg( e^{\pm i z|x_\ell-x_{\ell+1}|} a(z)+ \widetilde{O}_1(z)  \bigg)\\
	\prod_{j\in J^-} e^{-iz|x_j-x_{j+1}|}H_1(z,x_j,x_{j+1})
	\prod_{p\in J^+} e^{iz|x_p-x_{p+1}|}H_1(z,x_p,x_{p+1})\, dz.
\end{multline*}
We apply Lemma~\ref{lem:vdc} with $$\phi(z)=-t\sqrt{z^2+m^2}-z\bigg(\sum_{j\in J^-} |x_j-x_{j+1}|+\gamma |x_\ell-x_{\ell+1}|-\sum_{p\in J^+} |x_p-x_{p+1}|\bigg),
$$ 
where $\gamma\in \{-1,0,1 \}$, and 
$$
	\psi(z)=[a(z)+ \widetilde{O}_1(z)]\prod_{j\in J} H_1(z,x_j,x_{j+1}).
$$
We may again  bound the contribution of the spatial integrals by Lemma~\ref{lem:potential2}.
\begin{align*}
	\frac{1}{t^{\f32}} \int_{\R^{3k}} \prod_{p=1}^k |V(x_p)| \prod_{j\in J} \bigg(\frac{1}{|x_j-x_{j+1}|}+\frac{1}{|x_j-x_{j+1}|^2}
		 \bigg) dx_1dx_2\dots dx_k \les \frac{1}{t^{\f32}}.
\end{align*}

On the other hand, if the derivative hits one of the iterated resolvents, we have to bound
\begin{align*}
	\frac{1}{t} \int_0^\infty e^{-it\sqrt{z^2+m^2}}  \big( \mathcal{R}^{-}_0V\big)^\ell  [\mathcal{R}_0^+ - \mathcal{R}_0^{-}] \partial_z \big(V \mathcal{R}^{+}_0\big)^{k-\ell} (z)	\, dz.
\end{align*}
Using Lemma~\ref{lem:free diff}, we have    
$[\mathcal{R}_0^+ - \mathcal{R}_0^{-}](z)(x_\ell,x_{\ell+1})= \widetilde{O}_1(z)$.  Then, using \eqref{eq:resolv H}, we have
$$
\big( \mathcal{R}^{-}_0V\big)^\ell  \partial_z \big(V \mathcal{R}^{+}_0\big)^{k-\ell}(z) = e^{iz (\sum_{p\in J^+} |x_p-x_{p+1}|-\sum_{j\in J^-} |x_j-x_{j+1}|)} b(z),
$$
where 
$$
	|b(z)|,\, |\partial_z b(z)|\les \sum_{\ell\in J^+} |x_\ell-x_{\ell+1}| \prod_{j\in J} \bigg(\frac{1}{|x_{j}-x_{j+1}|}+\frac{1}{|x_{j}-x_{j+1}|^2}\bigg)\prod_{r=1}^k V(x_r).
$$
Combining these bounds we have to bound
\begin{align*}
	\frac{1}{t} \int_0^\infty e^{-it\sqrt{z^2+m^2}+iz (\sum_{p\in J^+} |x_p-x_{p+1}|-\sum_{j\in J^-} |x_j-x_{j+1}|)} \psi(z)\, dz,
\end{align*}
where $\psi(z), \psi'(z)$ are supported on a small neighborhood of $z=0$ and satisfy
   $$
   	|\psi(z)|,\, |\partial_z \psi(z)|\les\sum_{\ell\in J^+} |x_\ell-x_{\ell+1}| \prod_{j\in J} \bigg(\frac{1}{|x_{j}-x_{j+1}|}+\frac{1}{|x_{j}-x_{j+1}|^2}\bigg)\prod_{r=1}^k V(x_r).
   $$
Thus, we apply Lemma~\ref{lem:vdc} to bound the spatial integral
\begin{align*}
	\sup_{x_0,x_k\in \R^3} \frac{1}{t^{\f32}} \int_{\R^{3k}} 
	\sum_{\ell\in J^+} |x_\ell-x_{\ell+1}| \prod_{j\in J} \bigg(\frac{1}{|x_{j}-x_{j+1}|}+\frac{1}{|x_{j}-x_{j+1}|^2}\bigg)\prod_{r=1}^k \la x_r\ra^{-3-}\, dx_1dx_2\dots dx_k.
\end{align*}
Using Lemma~\ref{lem:potential2}, first in $x_{\ell}$, we show that the spatial integrals are bounded uniformly in $x_0, x_{k+1}$ by iterating the bound
$$
	\sup_{x_{j+1}\in \R^3}\int_{\R^3} \la x_j \ra^{-3-}\bigg(
	1+\frac{1}{|x_j-x_{j+1}|}+\frac{1}{|x_j-x_{j+1}|^2}
	\bigg)\, dx_j \les 1.
$$

\end{proof}

We finish this subsection with the following general lemma which will be useful in the following subsections.

\begin{lemma}\label{lem:genericbound}
Assume that the operator $E(z)$ with kernel $E(z)(x,y)$ satisfies (for $0<|z|<z_0$)
$$\|  |\partial_z^{k} E(z)(x,y)| \|_{L^2 \rightarrow L^2} \les 1,\,\,\,   k=0,1,\,\,\text{ and }
 \| | \partial_z^{2} E(z)(x,y)| \|_{L^2 \rightarrow L^2} \les z^{-1+}.$$  Also assume that the operators $E_1(z)$ and $E_2(z)$ 
satisfy (for some $\alpha\geq 0$)
$$
\big|\partial^k_z E_j(z)(x,y)\big|\les  \big(|x-y|^{-2}+|x-y|^\alpha\big), \,\,\,\,j=0,1, \,\,\,k=0,1,
\,\,\text{ and }$$
$$
\big|\partial^2_z E_j(z)(x,y)\big|\les z^{-1+} \big(|x-y|^{-2}+|x-y|^\alpha\big), \,\,\,\,j=0,1.
$$
Let $|V(x)| \les \la x \ra ^{-\beta}$ for some $\beta>2\alpha+3$.  Then, 
$$
\sup_{x,y\in \R^3} \Bigg| \int_{-\infty}^{\infty} e^{-it \sqrt{z^2+m^2}} \frac{z \chi(z)}{\sqrt{m^2+z^2}} \Big( \mathcal{R}_0VE_1v^*EvE_2V\mathcal{R}_0  
  \Big)(z)(x,y)   dz  \Bigg| \les \la t\ra^{-\f32}.
$$
\end{lemma}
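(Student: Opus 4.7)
The plan is to exploit the oscillatory factor $e^{-it\sqrt{z^2+m^2}}$ together with the convenient identity $\f{z}{\sqrt{z^2+m^2}} = \partial_z\sqrt{z^2+m^2}$ appearing in the measure. For $|t|\les 1$ I would argue that the integral is uniformly bounded in $x,y$; for large $t$ I would integrate by parts once (the $\partial_z\sqrt{z^2+m^2}$ factor produces a gain of $1/t$ and the boundary terms vanish since $\chi$ is compactly supported) and then apply Lemma~\ref{lem:vdc} to the resulting oscillatory integral. The lower bound $\partial_z^2\sqrt{z^2+m^2} = m^2(z^2+m^2)^{-3/2} \gtrsim 1$ on the support of $\chi$ supplies the hypothesis of Lemma~\ref{lem:vdc} and yields the additional factor $t^{-1/2}$, so that altogether $1/t\cdot t^{-1/2} = t^{-3/2}$.

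The main obstacle is that $E(z)$ is only given as an $L^2\to L^2$ absolutely bounded operator, without any pointwise kernel bound. I would handle this by inserting Cauchy--Schwarz at the pair of variables that $E(z)$ connects, writing
$$
\big(\mathcal{R}_0 V E_1 v^* E v E_2 V \mathcal{R}_0\big)(z)(x,y) = \la E(z) g_{z,y},\, \ol{f_{z,x}}\ra_{L^2},
$$
where
$$
f_{z,x}(x_1) = v^*(x_1) \int \mathcal{R}_0(z)(x,x_0)\,V(x_0)\,E_1(z)(x_0,x_1)\, dx_0,
$$
and $g_{z,y}(x_2)$ is the analogous object built from $E_2(z)(x_2,x_3) V(x_3)\mathcal{R}_0(z)(x_3,y)$. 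This reduces the pointwise estimate to
$$
|K(z;x,y)| \les \|f_{z,x}\|_{L^2}\,\|E(z)\|_{L^2\to L^2}\,\|g_{z,y}\|_{L^2}.
$$

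To bound $\|f_{z,x}\|_{L^2_{x_1}}$ uniformly in $x$ and in $z$ on the support of $\chi$, I would combine the resolvent bound $|\mathcal{R}_0(z)(x,x_0)|\les |x-x_0|^{-2}+|x-x_0|^{-1}$ recorded in \eqref{eq:partialzR0} with the hypothesis $|E_1(z)(x_0,x_1)|\les |x_0-x_1|^{-2}+|x_0-x_1|^\alpha$, using the trivial split $|x_0-x_1|^\alpha\les \la x_0\ra^\alpha \la x_1\ra^\alpha$ to handle the growing term. Lemma~\ref{lem:potential2} then controls the $x_0$-integral, yielding a pointwise estimate of the form $|f_{z,x}(x_1)|\les |v(x_1)|(|x-x_1|^{-1}+\la x_1\ra^\alpha)$. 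Squaring and integrating, using $|v(x_1)|^2\les \la x_1\ra^{-\beta}$ together with the hypothesis $\beta>2\alpha+3$, gives $\|f_{z,x}\|_{L^2}\les 1$; an identical argument bounds $\|g_{z,y}\|_{L^2}$. Combined with $\|E(z)\|_{L^2\to L^2}\les 1$, this yields $|K(z;x,y)|\les 1$ uniformly and disposes of the case $|t|\les 1$.

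For $|t|\gg 1$, after the integration by parts, Lemma~\ref{lem:vdc} requires a bound on $\int|\partial_z^2(\chi(z) K(z;x,y))|\,dz$. Expanding $\partial_z^2$ by the Leibniz rule distributes derivatives across $f_{z,x}$, $E(z)$, $g_{z,y}$, and $\chi$. The bounds needed on $\partial_z^j f_{z,x}$ and $\partial_z^j g_{z,y}$ in $L^2$ follow from the same scheme as above, since differentiating $\mathcal{R}_0$ or $E_j$ inserts a harmless power $|x-x_0|^j$ or $|x_0-x_1|^j$ that is absorbed by $V$ and by the $\la\cdot\ra^\alpha$ weight already produced, again provided $\beta>2\alpha+3$. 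The hypotheses on $E(z)$ supply $L^2\to L^2$ bounds of order $1$ for $j=0,1$ and of order $z^{-1+}$ for $j=2$, and the borderline factor $z^{-1+}$ is integrable near $z=0$. The main technical bookkeeping will be verifying that each Leibniz term in $\partial_z^2$ still leads to a spatial integral of the type covered by Lemma~\ref{lem:potential2}, but once this is checked the $L^1_z$-integral is finite and the combined $1/t$ and $t^{-1/2}$ gains produce the claimed $\la t\ra^{-\f32}$ bound.
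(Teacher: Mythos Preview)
Your small-$t$ argument is fine and matches the paper's: both use Cauchy--Schwarz at the $E(z)$ slot together with Lemma~\ref{lem:potential2} to bound the spatial integrals, and the hypothesis $\beta>2\alpha+3$ is exactly what is needed there.

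The large-$t$ argument, however, has a genuine gap. You apply Lemma~\ref{lem:vdc} with the phase $\phi(z)=-t\sqrt{z^2+m^2}$ and amplitude $\psi(z)=\partial_z(\chi K)$, so you must control $\int|\partial_z^2(\chi K)|\,dz$ uniformly in $x,y$. By Leibniz, one of the terms in $\partial_z^2 K$ carries both $z$-derivatives on the outermost resolvent $\mathcal R_0(z)(x,x_0)$. From \eqref{eq:partialzR0},
\[
\big|\partial_z^2\mathcal R_0(z)(x,x_0)\big|\ \les\ \Big(\frac{1}{|x-x_0|}+\frac{1}{|x-x_0|^2}\Big)|x-x_0|^2\ =\ 1+|x-x_0|,
\]
and the factor $|x-x_0|$ grows like $\la x\ra$ on the support of $V(x_0)$. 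It is \emph{not} absorbed by $V(x_0)\les\la x_0\ra^{-\beta}$, contrary to your claim; writing $|x-x_0|\les\la x\ra\la x_0\ra$ leaves an uncontrolled $\la x\ra$ in front. The resulting bound on $\|\partial_z^2 f_{z,x}\|_{L^2}$ is of size $\la x\ra$, not $O(1)$, and the same happens on the $y$ side. So your estimate is $\la x\ra\la y\ra\, t^{-3/2}$, which is not the uniform $L^1\to L^\infty$ bound claimed.

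The paper fixes this by not leaving the oscillation of the outer resolvents in the amplitude. Writing $\mathcal R_0(z)(x,x_0)=e^{iz|x-x_0|}H_1(z,x,x_0)$ as in \eqref{eq:resolv H}--\eqref{eq:H2bound}, one has
\[
\big|\partial_z\big(e^{-iz|x-x_0|}\mathcal R_0\big)\big|,\ \big|\partial_z\big(e^{-iz|x-x_0|}\partial_z\mathcal R_0\big)\big|\ \les\ 1+|x-x_0|^{-2},
\]
with no growth. One then applies Lemma~\ref{lem:vdc} with the enlarged phase $\phi(z)=-t\sqrt{z^2+m^2}+z\big(|x-x_0|+|y_1-y|\big)$; since the linear terms have vanishing second derivative, the lower bound $|\phi''|\gtrsim t$ is unaffected, while the amplitude now has $z$-derivatives bounded uniformly in $x,y$. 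With this modification your Cauchy--Schwarz scheme goes through unchanged.
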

\begin{proof}
We start with bound for small $t$. Using the bounds  in the hypothesis for $k=0$ and using $|\mR_0(z)(x,y)|\les 1+|x-y|^{-2} $ from \eqref{eq:resolv H}, we estimate the $z$ integral by 
$$
\int_{-\infty}^{\infty}   \chi(z)\psi(z)dz,\,\,\,\text{ where}
$$
$$
\psi(z)=\int_{\R^{12}}   \frac{(1+r_1^{-2}) (r_2^{-2}+r_2^\alpha)}{ \la x_1\ra^{ \beta}\la x_2\ra^{ \f\beta2}}  |E(z)(x_2,y_2)| 
\frac { (r_3^{-2}+r_3^\alpha) (1+r_4^{-2})}{\la y_2\ra^{ \f\beta2}\la y_1\ra^{ \beta}} dx_1dx_2dy_1dy_2.
$$
Here $r_1:=|x-x_1|, r_2:=|x_1-x_2|,r_3:=|y_2-y_1|, r_4:=|y_1-y|$.
We can bound $\psi$  by 
$$
\Big\|\int_{\R^3} \frac{(1+r_1^{-2}) (r_2^{-2}+r_2^\alpha)}{ \la x_1\ra^{ \beta}\la x_2\ra^{ \f\beta2}} dx_1\Big\|_{L^2_{x_2}(\R^3)}^2 
\big\||E(z)|\big\|_{L^2\to L^2}
\Big\|\int_{\R^3} \frac{(1+r_4^{-2}) (r_3^{-2}+r_3^\alpha)}{ \la y_1\ra^{ \beta}\la y_2\ra^{ \f\beta2}} dy_1\Big\|_{L^2_{y_2}(\R^3)}^2 .
$$
Note that  using Lemma~\ref{lem:potential2}
\begin{multline*}
\int_{\R^3}\frac{(1+r_1^{-2}) (r_2^{-2}+r_2^\alpha)}{ \la x_1\ra^{ \beta}\la x_2\ra^{ \f\beta2}} dx_1 \les \int_{\R^3}
\frac{1 }{ \la x_1\ra^{ \beta-\alpha}\la x_2\ra^{ \f\beta2-\alpha} } dx_1 + \int_{\R^3} r_1^{-2} r_2^{-2} \la x_2\ra^{-\f\beta2} dx_1 \\
\les 
\la x_2\ra^{ -\f\beta2+\alpha } + |x-x_2|^{-1}  \la x_2\ra^{-\f\beta2} \in L^2_{x_2},
\end{multline*}
uniformly in $x$ provided that $\beta>2\alpha+3$. This finishes the proof since $\big\||E(z)|\big\|_{L^2\to L^2}$ is bounded on the support of $\chi$. 

Now we consider the claim for large $t$. After an integration by parts we have to bound
$$
  \frac1{t} \int_{-\infty}^{\infty}   e^{it \sqrt{z^2+m^2}} \partial_z   \Big( \chi(z) [\mathcal{R}_0VE_1v^*EvE_2V\mathcal{R}_0  
 ](x,y)  \Big)   dz.  
$$
Now  using Lemma~\ref{lem:vdc} with the phase $\phi=t\sqrt{z^2+m^2}+zr_1+zr_4$ we estimate the integral above by
$$
  \frac1{|t|^{\f32}} \int_{-\infty}^{\infty}    \Big| \partial_z \Big[e^{-iz(r_1+r_4)} \partial_z   \Big( \chi(z) [\mathcal{R}_0VE_1v^*EvE_2V\mathcal{R}_0  
 ](x,y)  \Big) \Big]  \Big|dz. 
$$
Note that using \eqref{eq:resolv H} and \eqref{eq:H2bound} we have
$$
|\mR_0|, |\partial_z\mR_0|, \big|\partial_z  e^{-iz|x-y|} \mR_0\big|,  \big|\partial_z  e^{-iz|x-y|} \partial_z\mR_0\big| \les 1+|x-y|^{-2}.
$$
The proof now follows from the calculation above for small $t$; the only difference is, if both derivatives hit $E$ (or one of $E_1$, $E_2$), the $z$ integral will have a harmless $z^{-1+}$ term, which is integrable on the support of $\chi(z)$.  
\end{proof}

\subsection{Dispersive estimates when there is a resonance of the first kind} \label{subsec:noe}
  
  In this subsection we consider the case when there is a resonance of the first kind at threshold energy, that is when $S_1\neq 0$ and $S_2=0$, in which case $S_1$ is rank at most two by Corollary~\ref{cor:2d}. 
  
In the previous section we established the contribution of the first three terms in the expansion \eqref{eq:pert2} to the Stone's formula \eqref{eq:Stone}. Now we turn to the last term in \eqref{eq:pert2}, we need to analyze 
\begin{multline} \label{eq:beforeextend}
 \int_{0}^{\infty} e^{-it \sqrt{z^2+m^2}} \frac{z \chi(z)}{\sqrt{z^2+m^2}} \Big[[\mathcal{R}_0^{+}V\mathcal{R}_0^{+}v^*(A^{+})^{-1}  v \mathcal{R}_0^{+}V\mathcal{R}_0^{+}](z) \\
  - [\mathcal{R}_0^{-}V\mathcal{R}_0^{-}v^*(A^{-})^{-1}  v \mathcal{R}_0^{-}V\mathcal{R}_0^{-}](z) \Big] dz.
\end{multline}
Recalling the discussion immediately preceeding Lemma~\ref{lem:M0M1}, we identify
$ \mathcal{R}_0^{-}(-z)=\mathcal{R}_0^{+}(z)=:\mathcal{R}_0(z)$.  Similarly, $A^-(-z)=A^+(z):=A(z)$. Hence, by a change of variable we can extend the integral \eqref{eq:beforeextend} to the whole real line and obtain 
$$
\eqref{eq:beforeextend}= \int_{-\infty}^{\infty} e^{-it \sqrt{z^2+m^2}} \frac{z \chi(z)}{\sqrt{z^2+m^2}} [\mathcal{R}_0V\mathcal{R}_0v^*A^{-1}  v \mathcal{R}_0V\mathcal{R}_0](z)(x,y) dz.
$$
In contrast to the analysis of the Born series in the previous subsection, we extend the integral to the real line.  This will allow us to integrate by parts without boundary terms and, after a change of variables, use Fourier transform techniques.

Note that we have 
\begin{multline} \label{eq:bound1}
\sup_{x,y\in \R^3} \left|\int_{-\infty}^{\infty} e^{-it \sqrt{z^2+m^2}} \frac{z \chi(z)}{\sqrt{z^2+m^2}} [\mathcal{R}_0V\mathcal{R}_0v^* A^{-1}  v \mathcal{R}_0V\mathcal{R}_0] (z)(x,y) dz\right|\\
  \les \sup_{x,y,|z|\leq z_0}  \big| [\mathcal{R}_0(z)V\mathcal{R}_0(z)v^*[z A^{-1} (z)] v \mathcal{R}_0(z)V\mathcal{R}_0(z)](x,y)\big|.
\end{multline}
By Lemma~\ref{lem:invexp1},  $|z|\, \| A^{-1}(z)(x,y) \|_{L^2 \rightarrow L^2} \les 1$ on the support of $\chi$. Then, by Remark~\ref{rmk:potential} we have 
$$
|\eqref{eq:bound1}|   
  \les \sup_{x,y\in \R^3} \| [\mathcal{R}_0V \mathcal{R}_0v^*](x,x_2))\|_{L^{2}_{x_2}} \||z  A^{-1} (z) |\|_{L^2 \rightarrow L^2} \|  v \mathcal{R}_0V \mathcal{R}_0](y_2,y)\| _{L^{2}_{y_2}} \les 1,
$$
 which shows the boundedness of \eqref{eq:bound1} as $t\rightarrow 0$. Hence, to establish the claim of Theorem~\ref{thm:main1}, it will be enough to prove the following proposition for any $t>1$.

\begin{prop}\label{prop:res1} 
	Under the assumptions of Theorem~\ref{thm:main1}, we have
	\begin{align*}
		\int_{-\infty}^{\infty} e^{-it\sqrt{z^2+m^2}} \frac{z\chi(z)}{\sqrt{z^2+m^2} }
		[\mathcal{R}_0 V\mathcal{R}_0 v^*A^{-1} v \mathcal{R}_0 V\mathcal{R}_0 ](z)(x,y) \, dz 
		=t^{-\f12}e^{-imt} K_t(x,y)+O(t^{-\f32}),
	\end{align*}
where the error term holds uniformly in $x,y$; $K_t(x,y)=P_r(x,y)+\widetilde K_t(x,y)$  is a time dependent operator of rank at most $ 2$  satisfying $\sup_t\|K_t\|_{L^1\to L^\infty}\les 1$ and $|\widetilde K_t(x,y) |\les \la x\ra^{j} \la y\ra^{j}\la t\ra^{-j}$ for any $0\leq j\leq 1$. Moreover,  
\begin{multline*}
P_r(x,y) = \sum_{j=1}^2 c_j   \psi_j(x) \psi_j^*(y), \text{ where }   c_j= \frac{(-2\pi i)^{\f32}} {m^{\f32}\|M_{uc}V\psi_j \|^2_{\C^4}} \text{ and }\\
 \psi_j \in  L^{2,-\f12-}(\R^3)\cap L^\infty(\R^3), \,\ ( D_m +V -mI)\psi_j=0, \,\, \\ 
\la  M_{uc}V\psi_i ,  M_{uc}V\psi_i \ra = \|M_{uc}V\psi_j \|^2_{\C^4} \delta_{ij},  \,\,i,j=1,2.
 \end{multline*}
Here $c_2=0$ iff $\text{rank} (S_1)=1$. 
\end{prop}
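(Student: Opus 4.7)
\textbf{Proof plan for Proposition~\ref{prop:res1}.} The plan is to substitute the expansion of $A^{-1}(z)$ from Lemma~\ref{lem:invexp1}, namely $A^{-1}(z) = -iz^{-1}S_1 D_1 S_1 + E(z)$, into the spectral integral and treat the two summands separately. For the contribution of $E(z)$, I would invoke Lemma~\ref{lem:genericbound} with $\alpha=0$, using the bound $|\mathcal{R}_0(z)(x,y)|\les |x-y|^{-1}+|x-y|^{-2}$ from \eqref{eq:resolv H} and the derivative bounds on $E$ provided by Lemma~\ref{lem:invexp1}; this directly yields an $O(\la t\ra^{-3/2})$ bound in $L^1\to L^\infty$. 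The remaining task is therefore to analyze
\[
-i \int_{-\infty}^{\infty} e^{-it\sqrt{z^2+m^2}} \frac{\chi(z)}{\sqrt{z^2+m^2}} \bigl[\mathcal{R}_0 V\mathcal{R}_0 v^*S_1 D_1 S_1 v \mathcal{R}_0 V\mathcal{R}_0\bigr](z)(x,y)\, dz,
\]
where the $z/z$ cancellation leaves an integrable singularity.

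Next, I would expand each $\mathcal{R}_0(z)$ using $\mathcal R_0(z) = \mathcal{G}_0 + \widetilde O(z(1+r^{-1}))$ from \eqref{eq:R0exp_00}. The leading piece (replacing every $\mathcal{R}_0$ by $\mathcal{G}_0$) should produce the main $t^{-1/2}$ contribution, while every correction in $z$ gives an extra factor that, combined with integration by parts against $e^{-it\sqrt{z^2+m^2}}$ and a Van der Corput argument as in Section~\ref{subsec:Born}, yields $O(\la t\ra^{-3/2})$. The key algebraic simplification at the leading order uses the defining relation $A_0 S_1 = 0$, i.e., $v\mathcal{G}_0 v^*\phi = -U\phi$ for all $\phi \in S_1 L^2$, which collapses $\mathcal{G}_0 V \mathcal{G}_0 v^*\phi = \mathcal{G}_0 v^* U v \mathcal{G}_0 v^*\phi = -\mathcal{G}_0 v^* \phi$. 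Setting $\psi := -\mathcal{G}_0 v^*\phi$ identifies the resonance functions, so the spatial kernel at $z=0$ becomes $\sum_{i,j}\psi_i(x)(D_1)_{ij}\psi_j^*(y)$; choosing a basis $\{\phi_j\}$ of $S_1 L^2$ that diagonalizes $S_1 v\mathcal{G}_1 v^* S_1 = \tfrac{m}{2\pi} S_1 v M_{uc} v^* S_1$ and using $V\psi_j = v^*\phi_j$ gives $(D_1)_{jj} = \tfrac{2\pi}{m\|M_{uc}V\psi_j\|^2}$ and the stated orthogonality $\la M_{uc}V\psi_i, M_{uc}V\psi_j\ra = \|M_{uc}V\psi_j\|^2\delta_{ij}$.

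The $z$-integral is then evaluated by stationary phase: the phase $\phi(z)=-t\sqrt{z^2+m^2}$ has a nondegenerate critical point at $z=0$ with $\phi''(0)=-t/m$, so
\[
\int_{-\infty}^{\infty} e^{-it\sqrt{z^2+m^2}}\frac{\chi(z)}{\sqrt{z^2+m^2}}\,dz = e^{-imt}\, t^{-1/2}\cdot \frac{1}{m}\sqrt{\frac{-2\pi i}{m}} + O(t^{-3/2}),
\]
and combining with the $-i$ prefactor and the diagonal entries of $D_1$ produces exactly $c_j = \tfrac{(-2\pi i)^{3/2}}{m^{3/2}\|M_{uc}V\psi_j\|^2}$. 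This identifies the finite rank piece $e^{-imt}P_r$. The remaining operator $\widetilde K_t$ collects the $O(t^{-1/2})$ stationary-phase correction from the $z$-integral together with the contributions from the next-order terms in the expansion of $\mathcal{R}_0$; the polynomial growth $|x-y|^k$ that appears in those corrections is what produces the weighted bound $|\widetilde K_t(x,y)|\les \la x\ra^j\la y\ra^j \la t\ra^{-j}$ for $0\le j\le 1$ after interpolating between the pointwise bound at $t=0$ and the gain of one power of $t$ from integrating by parts once more.

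\textbf{Main obstacle.} The delicate point is not the leading-order computation but the systematic accounting of all error terms so that the weighted bound on $\widetilde K_t$ holds at the full range $0\le j\le 1$. Each additional factor of $z$ in the expansion of $\mathcal{R}_0(z)$ is paired with an additional factor of $|x-y|$, and these spatial factors must be balanced against the oscillation in $t$ via Van der Corput (Lemma~\ref{lem:vdc}) applied as in Subsection~\ref{subsec:Born}, while simultaneously keeping the $L^2$-based factors involving $A^{-1}(z)$ absolutely bounded. Interpolating the pointwise bound at small $t$ against the $\la t\ra^{-1}$ gain at large $t$ — so as to obtain the full family of weighted bounds — is the main technical hurdle.
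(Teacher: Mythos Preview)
Your plan for the $E(z)$ contribution and the algebraic identification of $P_r$ via $\mathcal{G}_0 v^* S_1 D_1 S_1 v \mathcal{G}_0$ matches the paper (though in the application of Lemma~\ref{lem:genericbound} you need $\alpha=1$, not $\alpha=0$: the second $z$-derivative of $\mathcal{R}_0$ is $\les |x-y|+1$ by \eqref{eq:partialzR0}, which is not dominated by $|x-y|^{-2}+1$). The genuine gap is in the treatment of the $S_1D_1S_1$ piece. Your decomposition $\mathcal{R}_0=\mathcal{G}_0+O(z(1+r^{-1}))$ throws the oscillation $e^{iz|x-y|}$ into the error, and the claim that each such correction contributes $O(\la t\ra^{-3/2})$ uniformly in $x,y$ fails: if you integrate by parts and apply Van~der~Corput with phase $-t\sqrt{z^2+m^2}$ alone, the $z$-derivatives of the amplitude land on $e^{iz|x-x_1|}$ and produce factors of $|x-x_1|$ that grow in $x$. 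Concretely, for the outermost resolvent the piece $(e^{izr_1}-1)\mathcal{G}_0(x,x_1)$ contributes at order $t^{-1/2}$ with coefficient proportional to $e^{-im\sqrt{t^2-r_1^2}}-e^{-imt}$, which is $O(r_1^2/t)$ only when $r_1\ll t$; this is exactly the source of $\widetilde K_t$ and cannot be absorbed into a uniform $O(t^{-3/2})$ remainder.

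The paper avoids this by never separating the oscillation from the resolvent. Using \eqref{eq:HF} it writes each $\mathcal{R}_0(z)(u,w)=F(u,w)e^{iz|u-w|}/|u-w|+O(z)e^{iz|u-w|}/|u-w|$ and factors the four-fold product as $F^{\otimes4}e^{iz\theta}/(r_1r_2r_3r_4)+z\,\mathcal E(z)e^{iz\theta}$ with $\theta=\sum r_j$, where $\mathcal E$ has $z$-derivatives bounded by $\prod(r_j^{-1}+r_j^{-2})$ uniformly. The term $z\mathcal E e^{iz\theta}$ is then $O(t^{-3/2})$ by IBP plus Van~der~Corput with the combined phase $-t\sqrt{z^2+m^2}+z\theta$. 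For the leading term one performs a genuine stationary-phase computation with that same combined phase (Lemma~\ref{lem:finite}): the critical point sits at $z=\gamma/\sqrt{1-\gamma^2}$ with $\gamma=\theta/t$, and the resulting factor $e^{-it\sqrt{1-\gamma^2}}$ is not a priori a product over the $r_j$. An additional step (Lemma~\ref{lem:sum}) factorizes it as $\prod_j e^{-im\sqrt{t^2-r_j^2}}$ modulo errors of size $t^{-1}\sum_{i<j}r_ir_j$; this factorization, together with the case split $\max r_j\gtrless t$, is what makes $K_t$ a rank-$\le 2$ operator with the stated weighted bounds on $\widetilde K_t$. These two ingredients are missing from your outline.
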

  To establish Proposition~\ref{prop:res1}, using the expansion in Lemma~\ref{lem:invexp1}, it suffices   to consider the following integrals
\be  \label{int:pert}\begin{split}
 \int_{-\infty}^{\infty} e^{-it \sqrt{z^2+m^2}} \frac{ -i \chi(z)}{\sqrt{z^2+m^2}}\big[\mathcal{R}_0(z)V\mathcal{R}_0(z)v^*S_1D_1S_1v\mathcal{R}_0(z)V\mathcal{R}_0(z) \big] (x,y) dz,  \\ 
   \int_{-\infty}^{\infty} e^{-it \sqrt{z^2+m^2}} \frac{z \chi(z)}{\sqrt{z^2+m^2}}\big[\mathcal{R}_0(z)V\mathcal{R}_0(z)v^*E(z)v\mathcal{R}_0(z)V\mathcal{R}_0(z) \big] (x,y) dz.
\end{split}
\ee
The second integral is $O(\la t\ra^{-3/2})$ using Lemma~\ref{lem:genericbound} provided that $\beta>5$. Indeed, the required bound for $E$ is given in  Lemma~\ref{lem:invexp1}, and for $E_1=E_2=\mR_0$ the hypothesis is satisfied with $\alpha=1$ using \eqref{eq:partialzR0}.  

Now we consider the first integral in \eqref{int:pert}. Using   \eqref{resolventex} for $\mathcal{R}_0(z)$ and  \eqref{free},   
and letting $F(x,y):= \frac{1}{4\pi}[   i \alpha \cdot \frac{(x-y)}{|x-y|^2}+ 2m I_{uc}] $, we have
\be \label{eq:HF} 
\mathcal{R}_0(z)(x,y)    = F(x,y)\frac{e^{i z|x-y|}}{|x-y|} +  \bigg[  i z \alpha \cdot \frac{(x-y)}{|x-y|} +  (\sqrt{z^2+m^2} -m)I\bigg]  \frac{e^{i z|x-y|}}{4\pi |x-y|}.                                 
\ee
Hence,
\begin{multline} \label{eq:Sint}
\mathcal{R}_0(z)(x,x_1) \mathcal{R}_0(z)(x_1,x_2) \mathcal{R}_0(z)(y_2,y_1)\mathcal{R}_0(z)(y_1,y) \\ = F(x,x_1)F(x_1,x_2)F(y_2,y_1)F(y_1,y)  \frac{e^{iz \theta }}{r_1r_2r_3r_4} + z \mathcal{E}(z)  e^{iz \theta },
\end{multline}
where $\theta = |x-x_1|+|x_1-x_2|+|y_2- y_1|+|y_1-y|:= r_1+ r_2 + r_3 +r_4 $ and $\mathcal{E}(z) $ satisfies the bound
$$
|\mathcal{E}^{(j)}(z)| \les \prod_{i=1}^{4}\bigg({\f 1 {r_i^2}}+{\f 1 {r_i}} \bigg), \,\,\,\,\, j=0,1,2.
$$
Therefore, for the first term in \eqref{int:pert} is given by
\begin{multline} \label{eq:finite+E}
 \frac{F(x,x_1)F(x_1,x_2)F(y_2,y_1)F(y_1,y)}{r_1r_2r_3r_4}\int_{-\infty}^{\infty} e^{-it \sqrt{z^2+m^2}+iz\theta} \frac{\chi(z)}{\sqrt{z^2+m^2}} dz \\
   + \int_{-\infty}^{\infty} e^{-it \sqrt{z^2+m^2}} \frac{z \chi(z)}{\sqrt{z^2+m^2}} \mathcal{E}(z) e^{iz \theta} dz = I + II.
 \end{multline} 
Note that $II$ can be estimated as follows using integration by parts followed with Lemma~\ref{lem:vdc},
\begin{multline*}
|II|= \left|{\f C t} \int_{-\infty}^{\infty} e^{-it \sqrt{z^2+m^2}+{iz \theta}} \Big[ \big(\chi(z) \mathcal{E}(z)\big)^\prime + \theta \chi(z) \mathcal{E}(z) \Big]dz \right|\\
   \les {\f 1 {|t|^{3/2}} }  \Big| \int_{-1}^{1}  \big(\chi(z) \mathcal{E}(z)\big)^{\prime\prime} + \theta \big(\chi(z) \mathcal{E}(z) \big)^\prime dz\Big|  \les {\f 1{ |t|^{3/2} }} \la \max_i{r_i}  \ra  \prod_{i=1}^{4}({\f 1 {r_i^2}}+{\f 1 r_i}).
\end{multline*} 
The spatial integrals can be estimated as in the proof of Lemma~\ref{lem:genericbound} with $\alpha=0$, $\beta>3$, and $E=S_1D_1S_1$. 

 Next we consider the first term in \eqref{eq:finite+E}. Note that this integral can be estimated by $t^{-\f12}$ easily using Lemma~\ref{lem:vdc} with $\phi(z)=-t \sqrt{z^2+m^2}+{z \theta}$. In the rest of this subsection we establish the properties of the operator which has decay rate $t^{-\f12}$.

For notational convenience, we suppress the integral kernels' spatial variable dependence, which should be clear from context. First we assume that at least one of the $r_j$'s is greater than $t$. In this case we have $ \frac{1}{ \max_j r_j} \les {\f 1 t} $. Hence, we can exchange the largest $r_j$ with $t$ to gain extra time decay. Using an analysis similar to that in the proof of Lemma~\ref{lem:genericbound} one can easily see that the spatial integrals converge.   Thus, we have
\begin{align*}
 |I | \les {\f1{t^{\f12}}} \Big| \int_{\R^{12}} \frac{FVFv^*S_1D_1S_1vFVF}{r_1 r_2 r_3 r_4} dx_1dx_2dy_1dy_2 \Big| \les t^{-\f32}.
\end{align*}
 Now it remains to consider the case when $r_j \ll t$ for all $j$. We start with  the following lemma.  
\begin{lemma} \label{lem:finite} Let $\theta = \sum_{j=1}^{4} r_j$. Then, 
\begin{multline}\label{eq:allsmall}
 \prod_{j=1}^4 \chi\big(\frac{r_j}{t}\big) \int_{-\infty}^{\infty} e^{-it \sqrt{z^2+m^2}+iz\theta} \frac{\chi(z)}{\sqrt{z^2+m^2}} dz = \frac{(-2\pi i)^{\f12} e^{3imt}}{(mt)^{\f12}} \prod_{j=1}^4 \chi\big(\frac{r_j}{t}\big) e^{-im(t^2-r_j^2)^{\f12}} \\
+ O\Bigg( \frac{1}{t^{\f32}}\Big[ \sum_{1\leq i<j\leq 4} r_ir_j+\sum_{j=1}^4 r_j +1 \Big] \Bigg) .
\end{multline}
\end{lemma}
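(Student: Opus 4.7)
The plan is to attack the oscillatory integral
\[
J(t,\theta) := \int_{-\infty}^{\infty} e^{-it\sqrt{z^2+m^2}+iz\theta}\,\f{\chi(z)}{\sqrt{z^2+m^2}}\,dz
\]
by the method of stationary phase, and then convert the resulting leading asymptotic to the expression on the right-hand side of \eqref{eq:allsmall}, carefully tracking the error introduced by each Taylor approximation. The cutoff $\prod_j \chi(r_j/t)$ restricts us to the regime $r_j\leq \lambda_0 t$, hence $\theta=\sum_j r_j\leq 4\lambda_0 t<t$ for $\lambda_0$ small; then the phase $\phi(z)=-t\sqrt{z^2+m^2}+z\theta$ has a unique non-degenerate critical point $z_c = m\theta/\sqrt{t^2-\theta^2}$ lying comfortably inside the support of $\chi$.

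First I would carry out the stationary phase computation. A direct calculation gives
\[
\phi(z_c)=-m\sqrt{t^2-\theta^2},\qquad \phi''(z_c)=-\f{(t^2-\theta^2)^{3/2}}{mt^2},\qquad \f{1}{\sqrt{z_c^2+m^2}}=\f{\sqrt{t^2-\theta^2}}{mt}.
\]
The transparent route is the change of variable $z=m\sinh u$, so that $\sqrt{z^2+m^2}=m\cosh u$ and the phase becomes $-m\sqrt{t^2-\theta^2}\cosh(u-\alpha)$ with $\tanh\alpha=\theta/t$. After the shift $v=u-\alpha$, the large parameter is $s=m\sqrt{t^2-\theta^2}\asymp mt$, and the classical Hankel-type asymptotic $\int e^{-is\cosh v}\psi(v)\,dv = \sqrt{-2\pi i/s}\,\psi(0)\,e^{-is}+O(s^{-3/2})$ yields
\[
J(t,\theta)=\f{(-2\pi i)^{1/2}}{m^{1/2}(t^2-\theta^2)^{1/4}}\,e^{-im\sqrt{t^2-\theta^2}}+O(t^{-3/2}),
\]
with error uniform in $\theta$ over the relevant range.

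Next I would convert this leading term to the claimed product form via two Taylor expansions. Writing $(t^2-\theta^2)^{-1/4}=t^{-1/2}\bigl(1+O(\theta^2/t^2)\bigr)$ replaces the coefficient by $(mt)^{-1/2}$ at the cost of an error of size $\theta^2/t^{5/2}$; since $r_j\leq \lambda_0 t$ gives $r_j^2\lesssim t r_j$, this is absorbed into $O\bigl(t^{-3/2}(\sum_j r_j+\sum_{i<j} r_i r_j)\bigr)$. The identity
\[
\sqrt{t^2-\theta^2}-\sum_{j=1}^{4}\sqrt{t^2-r_j^2}+3t=-\f{1}{t}\sum_{i<j}r_ir_j+O\!\Big(\f{\theta^4}{t^3}\Big)
\]
then produces
\[
e^{-im\sqrt{t^2-\theta^2}}=e^{3imt}\prod_{j=1}^{4}e^{-im\sqrt{t^2-r_j^2}}\cdot\Bigl(1+O\Bigl(\sum_{i<j}r_ir_j/t\Bigr)\Bigr),
\]
so the multiplicative correction contributes an error of order $t^{-3/2}\sum_{i<j}r_ir_j$. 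Summing these estimates gives \eqref{eq:allsmall}.

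The main obstacle will be ensuring the stationary phase bound is genuinely uniform in $\theta$. A naive Morse-lemma argument produces constants depending on the $C^3$-norm of $\phi$ and on the distance of $z_c$ from the boundary of the support of $\chi$, both of which vary with $\theta$; as $t\to\infty$ and $\theta$ sweeps $[0,4\lambda_0 t]$ one must verify these constants do not degenerate. The $\sinh$ substitution is designed exactly for this: after the shift $v=u-\alpha$, the curvature of $\cosh v$ at $v=0$ is simply $1$, independent of $\theta$, and the only $\theta$ dependence of the amplitude passes through the smooth factor $\chi(m\sinh(v+\alpha))/\cosh(v+\alpha)$, whose $C^k$-norms are uniformly bounded in $\alpha\in[0,\operatorname{arctanh}(4\lambda_0)]$.
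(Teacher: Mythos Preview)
Your overall strategy---stationary phase followed by a Taylor decomposition of the resulting phase---is exactly what the paper does, and your $\sinh$ substitution is a clean alternative to the paper's explicit Morse change of variables $z=g(s)$: both reduce to a model integral whose curvature at the critical point is independent of $\theta$, which is the key to uniformity. The paper then evaluates the quadratic-phase integral via the Fourier transform rather than quoting a Hankel asymptotic, but this is cosmetic.

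There is, however, a genuine gap in your phase-decomposition step. From your identity
\[
\sqrt{t^2-\theta^2}-\sum_{j}\sqrt{t^2-r_j^2}+3t=-\tfrac1t\sum_{i<j}r_ir_j+O(\theta^4/t^3)
\]
you conclude $e^{-im\sqrt{t^2-\theta^2}}=e^{3imt}\prod_j e^{-im\sqrt{t^2-r_j^2}}\bigl(1+O(\sum_{i<j}r_ir_j/t)\bigr)$. But the remainder $O(\theta^4/t^3)$ is too crude for that: take $r_1\sim\lambda_0 t$ and $r_2=r_3=r_4=0$, so that $\sum_{i<j}r_ir_j=0$ while $\theta^4/t^3\sim t$. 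Then your bound on the phase remainder is order $t$, and $|e^{iE}-1|\le|E|$ gives nothing useful. (In that particular configuration the true phase difference is zero, so your \emph{conclusion} happens to be correct---the problem is the intermediate estimate.) What is actually true, and what you need, is that the quantity on the left is $O\bigl(\sum_{i<j}r_ir_j/t\bigr)$ with no extra remainder: the diagonal terms $r_j^4/t^3$ in your Taylor expansion cancel exactly between $\sqrt{t^2-\theta^2}$ and $\sum_j\sqrt{t^2-r_j^2}$, leaving only cross terms. The paper handles this by a short mean-value-theorem lemma (Lemma~\ref{lem:sum}): for $f\in C^2$ with bounded second derivative, $f(\sum a_j)=\sum f(a_j)-(n-1)f(0)+O(\sum_{i<j}a_ia_j)$, applied with $f(\gamma)=\sqrt{1-\gamma^2}\,\chi(\gamma/4)$ and $a_j=r_j/t$. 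This gives the sharp $O(\sum r_ir_j/t)$ directly and closes the gap.
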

For the proof of Lemma~\ref{lem:finite} we need the following lemma.
\begin{lemma}\label{lem:sum}  
Let $f: \R \rightarrow \R$ be $C^2$ with bounded derivatives. Then for any $a_i > 0$ we have
$$
f\Big( \sum_{j=1}^n a_j \Big)=\sum_{j=1}^n f(a_j) - (n-1)f(0) + O \Big( \sum_{1\leq i< j\leq n}  a_ia_j \Big).
$$
\end{lemma}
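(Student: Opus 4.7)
The plan is to prove the identity by induction on $n$, with the case $n=2$ supplying the underlying mechanism and the general case following by a straightforward bootstrap. The key quantitative fact to extract from the $C^2$ hypothesis is not that $f(a_1+a_2)-f(a_2) = O(a_1)$ (which is immediate from boundedness of $f'$), but rather the sharper cancellation $f(a_1+a_2)-f(a_1)-f(a_2)+f(0) = O(a_1 a_2)$. This is where the bounded second derivative is used, and obtaining this quadratic (mixed) error—rather than a $O(a_j^2)$ error—is the main point of the lemma.

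For the base case $n=2$, I would argue as follows. Define the auxiliary function $g(t) := f(a_1+t) - f(t)$ for $t \geq 0$. Then $g$ is $C^1$ with $g'(t) = f'(a_1+t) - f'(t) = \int_t^{t+a_1} f''(s)\,ds$, so $|g'(t)| \leq \|f''\|_\infty\, a_1$. Applying the mean value theorem (or simply integrating $g'$) on $[0,a_2]$,
\begin{equation*}
g(a_2) - g(0) = \int_0^{a_2} g'(t)\, dt = O(a_1 a_2),
\end{equation*}
which, upon unwinding the definition of $g$, says exactly
\begin{equation*}
f(a_1+a_2) - f(a_1) - f(a_2) + f(0) = O(a_1 a_2),
\end{equation*}
the desired $n=2$ identity.

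For the induction step, assume the formula holds for $n-1$, set $S = \sum_{j=1}^{n-1} a_j$, and apply the $n=2$ case to $f(S + a_n)$:
\begin{equation*}
f\Big(\sum_{j=1}^n a_j\Big) = f(S) + f(a_n) - f(0) + O(S\, a_n) = f(S) + f(a_n) - f(0) + O\Big(\sum_{j=1}^{n-1} a_j a_n\Big).
\end{equation*}
Substituting the inductive hypothesis
\begin{equation*}
f(S) = \sum_{j=1}^{n-1} f(a_j) - (n-2)f(0) + O\Big(\sum_{1\leq i<j\leq n-1} a_i a_j\Big)
\end{equation*}
and combining the two error terms yields the claimed expression with $n-1$ copies of $f(0)$ subtracted and the full symmetric sum $\sum_{1\leq i<j\leq n} a_i a_j$ in the error.

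I do not expect any real obstacles; the only subtlety is choosing the right auxiliary function in the $n=2$ case so that the error comes out as $O(a_1 a_2)$ rather than the weaker $O(a_1^2 + a_2^2)$ one would get from naive Taylor expansion around a single point. Once that is in hand, the induction is mechanical, and positivity of the $a_j$ is not actually needed beyond ensuring that all arguments lie in the domain on which $f''$ is bounded.
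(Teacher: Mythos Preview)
Your proposal is correct and takes essentially the same approach as the paper: both reduce to the case $n=2$ by induction, and both establish the $n=2$ case by estimating the second difference $[f(a_1+a_2)-f(a_2)]-[f(a_1)-f(0)]$ via the bounded second derivative. The paper applies the mean value theorem twice (to $f$ and then to $f'$) rather than integrating your auxiliary function $g$, and it leaves the induction step to the reader whereas you spell it out, but these are cosmetic differences.
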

\begin{proof} By a simple induction argument, it suffices to prove this for $n=2$.  
 Without loss of generality we can also  assume that $a_2\geq a_1$. By the mean value theorem, we have
\begin{align*}
f(a_1+a_2) &= f(a_1)+ f(a_2) + [f(a_1+a_2) - f(a_2)] - [f(a_1)-f(0)] - f(0) \\
& = f(a_1)+ f(a_2) + f^\prime (c_1) a_1- f^\prime (c_2) a_1 -f(0) \,\,\,\ \text{for} \,\ c_1 \in(a_2, a_2+a_1) \ , \ c_2 \in (0,a_1) \\
& = f(a_1)+ f(a_2) + a_1(c_1-c_2) f^{\prime \prime} (c)- f(0).  
\end{align*}
Since $0\leq a_2-a_1\leq c_1-c_2\leq a_1+a_2\leq 2a_2$, this yields the lemma.
\end{proof}

\begin{proof}[Proof of Lemma~\ref{lem:finite}]
For the sake of simplicity we prove the lemma for $m=1$. Note that first, the critical point of $\phi(z)=  \sqrt{z^2+1}-{ z \gamma}$ is $ \omega = \frac{\gamma}{\sqrt{1-\gamma^2}}$. Here $\omega$ is defined since  $r_j \ll t$ for all $j$ implies $\gamma=\frac{\theta}{t} \ll 1$. We use the change of variables $z\mapsto z+\omega$ to move the critical point to zero and write
\begin{align*}
  \eqref{eq:allsmall}=\prod_{j=1}^4 \chi\big(\frac{r_j}{t}\big) e^{-it \sqrt{1-\gamma^2}} \int_{-\infty}^{\infty} e^{-it\big(\sqrt{(z+\omega)^2+1}-z\gamma -{\frac{1}{\sqrt{1-\gamma^2}}}\big)} \frac{\chi(z+\omega)}{\sqrt{(z+\omega)^2+1}} dz.
\end{align*}
With a change of variable $z =g(s) = \frac{s}{ \sqrt{1 - \gamma^2}} \big( \sqrt{1+ \frac{s^2}{4} } + \frac{s \gamma}{2} \big) $ this integral can be written as
$$ 
  \eqref{eq:allsmall}= \prod_{j=1}^4 \chi\big(\frac{r_j}{t}\big) e^{-it \sqrt{1-\gamma^2}} \int_{-\infty}^{\infty} e^{-it s^2 \frac{\sqrt{1-\gamma^2}}{2}} \psi(s) ds, $$
where
$$
\psi(s):= \frac{\chi(g(s)+\omega)}{  \sqrt{(g(s)+\omega)^2+1}}  g^\prime(s).
$$
Note that $\psi$ is supported on $\{s: |s|\les 1\}$. Since on this set   $|g^{(k)}(s)| \les 1$ for all $k \geq0$, we see that $\psi$ is a Schwartz function with derivatives bounded uniformly in $\gamma\ll 1$. 
Then, we have
\begin{align*} 
 \int_{-\infty}^{\infty} e^{-it s^2 \frac{\sqrt{1-\gamma^2}}{2}} \psi(s) ds  & = (-2\pi i)^{\f12}
 \int_{-\infty}^{\infty} \F^{-1}  \Big(\frac{e^{-it \frac{\sqrt{1-\gamma^2}}{2} (\cdot)^2}}{(1-\gamma^2)^{\f14} t^{\f12}} \Big)(\xi)  \widehat{\psi}(\xi) d \xi 
 \\& = \frac{(-2\pi i)^{\f12}}{(1-\gamma^2)^{\f14} t^{\f12}} \int_{-\infty}^{\infty} e^{-i{\frac{2 \xi^2}{t\sqrt{1-\gamma^2}}}} \widehat{\psi}(\xi) d \xi 
 \\ &= \frac{(-2\pi i)^{\f12}}{(1-\gamma^2)^{\f14} t^{\f12}}\Bigg[\psi(0) + \int_{-\infty}^{\infty} \Big[e^{-i{\frac{ 2 \xi^2}{t\sqrt{1-\gamma^2}}}} -1\Big] \widehat{\psi}(\xi) d \xi \Bigg].                                                                 
\end{align*}
Note that for $ \gamma \ll1$ we have $(1-\gamma^2)^{-\f14} \les 1$, and 
\begin{align*}
 \frac{1}{(1-\gamma^2)^{\f14} t^{\f12}} \left|\int_{-\infty}^{\infty}  \Big[e^{-i{\frac{2 \xi^2}{t\sqrt{1-\gamma^2}}}} -1\Big] \widehat{\psi(\xi)} d \xi \right|& \les  \frac1{t^{\f32}} \int_{-\infty}^{\infty} |\xi^2 \widehat{\psi}(\xi) | d\xi \\ 
		       & \les  \frac1{t^{\f32}} \|\psi\|_{L^1} +  \frac1{t^{\f32}} \|\psi^{\prime\prime\prime\prime}\|_{L^1} \les  \frac1{t^{\f32}}.
\end{align*}  
Hence, this term has the contribution $O(t^{-\f32})$ to \eqref{eq:allsmall}. For the last equality we used the fact that $ \| \partial_z^k \psi\|_{L^1} \les 1 $ uniformly in $\gamma$.

 We are left with the contribution of $\psi(0)$ to  \eqref{eq:allsmall} which is given by
\begin{align*}
 \prod_{j=1}^4 \chi\big(\frac{r_j}{t}\big) t^{-\f12} e^{-it \sqrt{1-\gamma^2}}   \frac{\chi( \omega) }{\sqrt{ \omega^2+1}(1-\gamma^2)^{\f34}} = t^{-\f12}  \prod_{j=1}^4 \chi\big(\frac{r_j}{t}\big) e^{-it f(\gamma)}   \frac{ \chi( \frac{\gamma}{\sqrt{1-\gamma^2}}) }{ (1-\gamma^2)^{\f14}},
\end{align*}
where $f(\gamma)=\sqrt{1-\gamma^2}\chi(\gamma/4)$ with $\gamma=\frac{\theta}{t}$. Note that $f$ has bounded derivatives. Since $f(0)=1$, using Lemma~\ref{lem:sum} we obtain (in the support of $ \prod_{j=1}^4 \chi\big(\frac{r_j}{t}\big)$)
\begin{multline} \label{eq:gamma}
e^{-it f(\gamma)}= e^{-it \big( \sum_{j=1}^4 f(\frac{r_j}{t}) - 3\big)} + e^{-i t \big( \sum_{j=1}^4 f(\frac{r_j}{t}) - 3 \big)} O \Big( e^{i\frac1t \sum_{1\leq i< j\leq 4} r_ir_j }-1\Big) \\
=e^{i3t}\prod_{j=1}^4 e^{-i(t^2-r_j^2)^{\f12}} + O\Big(\frac1t \sum_{1\leq i< j\leq 4}  r_ir_j\Big).
\end{multline}
Further, since $\gamma\ll 1$, we have
$$
\frac{ \chi( \frac{\gamma}{\sqrt{1-\gamma^2}}) }{ (1-\gamma^2)^{\f14}}=1+O(\gamma)=1+ O\Big(\frac1t\sum_{j=1}^k r_j\Big),
$$
we see the contribution of $\psi(0)$ to \eqref{eq:allsmall} is 
$$\frac{(-2\pi i)^{\f12} e^{i3t}}{t^{\f12}} \prod_{i=1}^4 e^{-i(t^2-r_j^2)^{\f12}} \chi (r_j/t) \\
+ O\Bigg( \frac{1}{t^{\f32}}\Big[ \sum_{1\leq i<j\leq 4} r_ir_j+\sum_{j=1}^4 r_j\Big] \Bigg).$$
This finishes the proof.
\end{proof}

We can now prove the main claim of this subsection.

\begin{proof}[Proof of Proposition~\ref{prop:res1}]

Using Lemma~\ref{lem:finite} we see that the contribution of $I$ in \eqref{eq:finite+E} to the first integral in \eqref{int:pert} is given by
\begin{multline}
-\frac{i(-2\pi i)^{\f12}}{\sqrt{m}t^{\f12}} \int_{\R^{12}}  e^{3imt}\prod_{j=1}^4 \chi\big(r_j/t \big) e^{-im(t^2-r_j^2)^{\f12}}\frac{FVFv^*S_1D_1S_1vFVF}{r_1r_2r_3r_4} dy_1dy_2dx_1dx_2 \\
  + O\Bigg(\frac{1}{t^{\f32}} \int_{\R^{12}}   \Big[ \sum_{1\leq i<j\leq 4} r_ir_j+\sum_{j=1}^4 r_j +1 \Big] \Big|\frac{FVFv^*S_1D_1S_1vFVF}{r_1r_2r_3r_4}\Big| dy_1dy_2dx_1dx_2  \Bigg)\\
  =:  t^{-\f12} K_t(x,y) + O( t^{-\f32} ).
\end{multline}
The last inequality follows from the proof of Lemma~\ref{lem:genericbound} noting that
$$ \frac{  \sum_{1\leq i<j\leq 4} r_ir_j+\sum_{j=1}^4 r_j +1  }{r_1r_2r_3r_4}\les \prod_{j=1}^4 (1+r_j^{-1}).
$$ 
Note that 
$$
K_t=  Ce^{3imt}\widetilde F_t V\widetilde F_t v^* S_1D_1S_1 v \widetilde F_t V\widetilde F_t, 
$$
where $C= (-i)^{\f32}(2 \pi )^{\f12}m^{-\f12}$ and $\widetilde{F}_t$ is an integral operator with kernel
$$
\widetilde F_t(x,y)=\frac{\chi\big( |x-y|/t\big)e^{-im[t^2-|x-y|^2]^{\f12}} F(x,y)}{|x-y|}= \chi\big( |x-y|/t\big)  e^{-imt[1-(|x-y|/t)^2]^{\f12}} \mathcal{G}_0(x,y).
$$
In particular, since $S_1$ is of rank at most two, $K_t$ is of rank at most two.

Note that since $\frac{|x-y|}{t} \les 1 $, we have 
\begin{multline*}
\chi\big( |x-y|/t\big)e^{-im[t^2-|x-y|^2]^{\f12}}  = e^{-imt} + O\big(   |x-y|^2/t \big) \\ =e^{-imt} + O\big( |x-y| \frac{|x-y|^j}{t^j} \big),\,\,\,0\leq j\leq 1.
\end{multline*}
The last equality holds since $\frac{|x-y|}{t}\les 1$.
Using this we write
$$
 K_t(x,y) =  C e^{-imt} [\mathcal{G}_0V\mathcal{G}_0v^*S_1D_1S_1v\mathcal{G}_0V\mathcal{G}_0] (x,y) +   \widetilde{K}_t(x,y).
$$
Since $ \big|[|x-y|^{1+j}\mathcal{G}_0(x,y)]\big| \les \la x\ra^j \la y \ra^j (1+|x-y|^{-1})$, we employ a similar argument as in Lemma~\ref{lem:genericbound} to show that $ |\widetilde{K}_t(x,y) |  \les \la x\ra^{j}\la y\ra^{j} t^{ -j}$, $0\leq j\leq 1$.   

By Corollary~\ref{cor:2d}, we know that the rank of $S_1$ is at most two. Hence, we can write 
$$ S_1=  \phi_1(x) \phi_1^*(y) + \phi_2(x) \phi_2^*(y) $$
where we pick $\{\phi_1,\phi_2\}$ as the orthonormal basis of $S_1L^2$. The self-adjointness of  $S_1v\mathcal{G}_1v^*S_1$  also allows us to pick the basis so that $S_1v\mathcal{G}_1v^*S_1$ is diagonal in $S_1L^2$, i.e., 
  \begin{align} \label{eq:M_uc}
   \la M_{uc} v^* \phi_j , M_{uc} v^* \phi_i \ra = \| M_{uc}v^*\phi_j \|^2_{\C^4} \delta_{ij}, \,\, i,j=1,2 .
  \end{align}
 Using this one can show that 
$$ S_1v\mathcal{G}_1v^* S_1 = \frac{m} {2\pi}\left[
  \begin{array}{ c c }
     a^2 & 0 \\
     0 & b^2
  \end{array} \right]S_1, \,\ \text{and} \,\ D_1= (S_1v\mathcal{G}_1v^* S_1)^{-1}=\frac{2\pi}{m}\left[
  \begin{array}{ c c }
     \f 1{a^2} & 0 \\
     0 & \f 1{b^2}
  \end{array} \right] S_1, $$ 
where $a= \| M_{uc}v^*\phi_1 \|_{\C^4}$ and $b= \| M_{uc}v^*\phi_2 \|_{\C^4}$. 
Therefore, the self-adjoint operator $S_1D_1S_1$ can be rewritten as 
\begin{align*}
	[S_1D_1S_1](x,y)= \frac{2\pi}{ma^2}\phi_1(x) \phi_1^*(y)+\frac{2\pi}{mb^2} \phi_2(x) \phi_2^*(y).
\end{align*}
Furthermore, Lemma~\ref{lem:esa1} gives us that $ \phi_j= Uv\psi_j$ for $\psi_j=-\mathcal{G}_0v^*\phi_j$ where $ ( D_m +V -mI)\psi_j=0$. Using this $\eqref{eq:M_uc} =\la M_{uc}V\psi_i,M_{uc}V\psi_j\ra$. 
Noting that by definition of $S_1$, we have $-S_1= S_1v\mathcal{G}_0v^*U=Uv\mathcal{G}_0v^*S_1$, we obtain
\begin{multline*}
 [\mathcal{G}_0V\mathcal{G}_0v^*S_1D_1S_1v\mathcal{G}_0V\mathcal{G}_0] (x,y) = [\mathcal{G}_0v^*S_1D_1S_1v\mathcal{G}_0](x,y) \\ = 
  \frac{2\pi }{ma^2}   [\mathcal{G}_0v^*\phi_1](x)[\mathcal{G}_0v^*\phi_1]^*(y) +  \frac{2\pi }{mb^2}  [\mathcal{G}_0v^*\phi_2](x)[\mathcal{G}_0v^*\phi_2]^*(y) \\
  =  \frac{2\pi }{ma^2}   \psi_1(x) \psi_1^*(y) +\frac{2\pi }{mb^2}   \psi_2(x) \psi_2^*(y):= \frac{m^{\f12}}{(-i)^{\f32}(2 \pi )^{\f12}}P_r(x,y).
\end{multline*}  
Finally, note that if $S_1$ is one dimensional it is generated by a single $\phi(x) $ with  $\la \phi, \phi \ra =1$. In this case we obtain $P_r(x,y) =  \frac{(-2\pi i)^{\f32}}{m^{\f32}  \| M_{uc}v^*\phi \|^2_{\C^4}}   \psi (x) \psi^*(y) $. 

This finishes the proof of Proposition~\ref{prop:res1}. 
\end{proof}

We are now ready to prove Theorem~\ref{thm:main1}.

\begin{proof}[Proof of Theorem~\ref{thm:main1}]
	
	Using the Stone's formula, \eqref{eq:Stone}, and the expansion for the resolvent \eqref{eq:pert2}, we reduce our analysis to oscillatory integral bounds.  Proposition~\ref{lem:born} suffices to bound the contribution of the first three terms of \eqref{eq:pert2} by $\la t\ra^{-\f32}$ as an operator from $L^1$ to $L^\infty$.  The contribution of the final term in \eqref{eq:pert2} is controlled by Proposition~\ref{prop:res1}.
\end{proof}

\subsection{Dispersive estimate when there is a resonance of the second or third kind at the threshold.} \label{subsec:e}

In this section we will investigate dispersive estimate in the case when $S_2 \neq 0$.
To establish the claim of Theorem~\ref{thm:main2}, we devote this subsection to proving

\begin{prop}\label{prop:res2}

	Under the assumptions of Theorem~\ref{thm:main2}, there is a finite rank operator $K_t$ so that
	\begin{align}\label{eq:resint}
		\int_{-\infty}^{\infty} e^{-it\sqrt{z^2+m^2}} \frac{z\chi(z)}{\sqrt{z^2+m^2} }
		[\mathcal{R}_0 V\mathcal{R}_0 v^*A^{-1} v \mathcal{R}_0 V\mathcal{R}_0 ](z)(x,y) \, dz
		=t^{-\f12}K_t(x,y)+ O(t^{-\f32}),
	\end{align}
 where $\sup_t \|K_t(x,y)\|_{L^1 \rightarrow L^{\infty} } < \infty$ and the error term is bounded uniformly in $x,y$. Moreover, the integral above is bounded uniformly  in $x,y,t$.
\end{prop}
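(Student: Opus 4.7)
The plan is to insert the expansion $A^{-1}(z)=-z^{-2}S_2D_3S_2+z^{-1}\Omega+E(z)$ of Lemma~\ref{lem:generalainverse} into \eqref{eq:resint} and handle the three resulting integrals in turn, mimicking the structure of Proposition~\ref{prop:res1} but paying careful attention to the new $z^{-2}$ singularity. The $E(z)$ contribution is controlled directly by Lemma~\ref{lem:genericbound} (applied with $E_1=E_2=\mR_0$, so $\alpha=1$ by \eqref{eq:partialzR0}), which yields $O(\la t\ra^{-3/2})$ under $\beta>5$, well within the hypothesis $\beta>11$.

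The $z^{-1}\Omega$ term produces an integral
$$ \int_{-\infty}^\infty e^{-it\sqrt{z^2+m^2}}\frac{\chi(z)}{\sqrt{z^2+m^2}}\bigl[\mR_0V\mR_0v^*\Omega v\mR_0V\mR_0\bigr](z)(x,y)\, dz $$
that is structurally identical to the first integral in \eqref{int:pert} from Proposition~\ref{prop:res1}, with the $z$-independent finite rank $\Omega$ in place of $S_1D_1S_1$. Hence the same analysis---the splitting \eqref{eq:HF}, Lemma~\ref{lem:finite}, and Lemma~\ref{lem:vdc}---extracts a $t^{-1/2}$ finite-rank contribution to $K_t$ and leaves an $O(\la t\ra^{-3/2})$ remainder.

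The hard part is the singular $z^{-2}$ term
$$ I_2=-\int_{-\infty}^\infty \frac{e^{-it\sqrt{z^2+m^2}}\chi(z)}{z\sqrt{z^2+m^2}}K(z)(x,y)\, dz,\qquad K(z)=\bigl[\mR_0V\mR_0v^*S_2D_3S_2v\mR_0V\mR_0\bigr](z). $$
A direct computation (using $A_0S_2=0$, equivalently $V\mathcal{G}_0v^*S_2=-v^*S_2$) shows $K(0)=\mathcal{G}_0 v^*S_2D_3S_2v\mathcal{G}_0\neq 0$, so the integrand has a genuine $|z|^{-1}$ singularity and $I_2$ must be read as a principal value. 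The key observation is that the weight $e^{-it\sqrt{z^2+m^2}}\chi(z)/\sqrt{z^2+m^2}$ is even in $z$; splitting $K=K_e+K_o$ into $z$-even and $z$-odd parts, the $K_e/z$ contribution vanishes by principal-value symmetry. Setting $S=S_2D_3S_2$, $M(z)=\mR_0V\mR_0v^*$, $N(z)=v\mR_0V\mR_0$, we telescope
\begin{align*}
K(z)-K(-z) &= \bigl(M(z)-M(-z)\bigr)S\,N(z)+M(-z)\,S\bigl(N(z)-N(-z)\bigr),
\end{align*}
and telescope each difference further so that each resulting summand contains exactly one factor $\mR_0(z)-\mR_0(-z)=[\mR_0^+-\mR_0^-](z)=\widetilde O_1(z)$ via Lemma~\ref{lem:free diff}. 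Therefore $K_o(z)=\widetilde O_1(z)$ and $K_o(z)/z=\widetilde O_1(1)$ is regular at $z=0$. Using \eqref{eqn:free res diff deriv} to express $[\mR_0^+-\mR_0^-](z)$ as a sum of $e^{\pm izr}a_\pm(z)$ terms with smooth symbols, every summand of $K_o(z)/z$ acquires the phase-plus-smooth-symbol form $e^{iz\sum\pm r_j}\psi(z,r_1,\dots,r_4)$ treated in Proposition~\ref{prop:res1}. Lemma~\ref{lem:finite} then extracts the $t^{-1/2}$ leading term (finite rank because $S_2$ is) and Lemma~\ref{lem:vdc}, together with spatial estimates as in Lemma~\ref{lem:genericbound}, bounds the remainder by $\la t\ra^{-3/2}$.

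The $L^1\to L^\infty$ boundedness of $K_t$ uniformly in $t$ follows from the finite rank structure combined with the fact that (as at the end of the proof of Proposition~\ref{prop:res1}) each summand of $K_t(x,y)$ has the form $\psi_j(x)\widetilde\psi_j^*(y)$ with $\psi_j\in L^\infty(\R^3)$ a threshold eigenfunction from Section~\ref{sec:esa}. Uniform boundedness of the full integral on the left of \eqref{eq:resint} for all $t$---including $t$ near $0$---follows because once the symmetric cancellation removes the principal-value-singular $K_e/z$ contribution, the remaining integrand in $I_2$ is regular on the compact support of $\chi$ and bounded independently of $t$, while the $\Omega$ and $E$ pieces are absolutely convergent integrals of bounded operators. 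The central technical obstacle is the bookkeeping in the telescoping expansion of $K_o(z)/z$: one must verify that every summand, after spatial integration against the two copies of $V$, still satisfies the integrability required by Lemma~\ref{lem:potential2} so that Lemma~\ref{lem:finite} applies uniformly in $x,y$, which is what forces the stronger decay $\beta>11$.
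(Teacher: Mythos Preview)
Your even/odd decomposition is a reasonable opening move, and the principal-value cancellation of $K_e/z$ is genuinely in the paper (it appears there as the vanishing of the pure $P_m$ contribution in \eqref{eq:RVGPm}). But the heart of your argument for $K_o(z)/z$ has a real gap. You assert that, via \eqref{eqn:free res diff deriv}, each telescoped summand can be written as $e^{iz\sum\pm r_j}\psi(z,r_1,\dots,r_4)$ with smooth $\psi$, so that Lemma~\ref{lem:finite} and Lemma~\ref{lem:vdc} apply. This is not correct when the factor $\tfrac1z[\mR_0^+-\mR_0^-]$ sits at an \emph{outer} position (position $1$ or $4$). That factor carries a $\frac{\sin(zr)}{zr}$ piece, whose first $z$-derivative is of size $\min(zr^2,1/z)$ and whose second derivative is of size $\min(r^2,r/z)$; these grow in $r=|x-x_1|$ (or $|y-y_1|$), so the symbol is not uniformly smooth in the free spatial variable, and neither the integration-by-parts/van der Corput scheme nor Lemma~\ref{lem:finite} yields bounds uniform in $x,y$. (Equation \eqref{eqn:free res diff deriv} is the formula for $\partial_z[\mR_0^+-\mR_0^-]$, not for $[\mR_0^+-\mR_0^-]/z$; dividing by $z$ destroys the clean $e^{\pm izr}$ structure.)

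The paper's route is structurally different and is designed precisely to avoid this obstruction. Instead of an even/odd split, it expands the \emph{inner} resolvents (positions $2,3$) as $\mG_0+iz\mG_1+z^2E_1$, uses the orthogonality $\mG_1v^*S_2=0$ of Corollary~\ref{cor:S2 orth} to kill the linear term, and then invokes the eigenprojection identity of Lemma~\ref{lem:eproj} to reduce the leading piece to the form \eqref{eq:RVGPm}. The single $[\mR_0-\mG_0]VP_m$ terms are handled by the Fubini/``$\int_0^{|y-y_1|}e^{izb}\,db$'' trick of Lemma~3.6, which converts the bad $\sin(zr)/(zr)$ amplitude into a phase; this only yields $t^{-1/2}$, but that is acceptable since $P_m$ is finite rank. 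The double term $[\mR_0-\mG_0]VP_mV[\mR_0-\mG_0]$ requires the subtraction $N(z,|x-x_1|)\mapsto N(z,|x-x_1|)-N(z,\la x\ra)$ of Lemma~\ref{lem:R0-G0}, which again rests on $M_{uc}VP_m=0$; without it one cannot reach $t^{-3/2}$ uniformly. Your argument never invokes this orthogonality, and without it (or an equivalent device such as the integral trick above) the outer-position summands in your telescoping are not controlled. Finally, the attribution of the condition $\beta>11$ to the spatial bookkeeping in Lemma~\ref{lem:finite} is off: that decay is needed to push the expansion of $A^{-1}(z)$ to the order required in Lemma~\ref{lem:generalainverse}, not for the oscillatory integrals.
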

\begin{proof}
Using the expansion for $A^{-1}(z)$ from Lemma~\ref{lem:generalainverse} in the integral above  we consider
 \begin{align*} 
\frac{1}{z^2} \mathcal{R}_0V \mathcal{R}_0v^*S_2D_3S_2 v \mathcal{R}_0V \mathcal{R}_0 + \frac{1}{z}\mathcal{R}_0V \mathcal{R}_0v^*\Omega v \mathcal{R}_0V \mathcal{R}_0 + \mathcal{R}_0V \mathcal{R}_0v^*E v \mathcal{R}_0V \mathcal{R}_0.
 \end{align*}
The last two terms can be handled similar to those we have already bounded in Subsection~\ref{subsec:noe}. In particular, the operator with decay rate $t^{-\f12}$ is not necessarily rank at most two, but is finite rank. This is because instead of $S_1D_1S_1$ here we have $\Omega$, which was shown to be finite rank in the proof of Lemma~\ref{lem:generalainverse}. Hence, it suffices to consider the integral
\begin{align}\label{eq:genericmainint}
  \int_{-\infty}^{\infty} e^{-it \sqrt{z^2+m^2}} \frac{ \chi(z)}{z \sqrt{z^2+m^2}}[\mathcal{R}_0V \mathcal{R}_0v^*S_2D_3S_2 v \mathcal{R}_0V \mathcal{R}_0](z)(x,y)\,  dz.
\end{align}
Using the identity $\mathcal{R}_0 =  \mathcal{G}_0 +[\mathcal{R}_0- \mathcal{G}_0]$, and noting that  $S_2v\mG_1=0$ (see  Corollary~\ref{cor:S2 orth}) we can rewrite 
\begin{multline} \label{eq:genericmain}
\mathcal{R}_0V \mathcal{R}_0v^*S_2D_3S_2 v \mathcal{R}_0V \mathcal{R}_0 = \mathcal{R}_0V \mathcal{G}_0v^*S_2D_3S_2 v \mathcal{G}_0V \mathcal{R}_0 \\
      +  \mathcal{R}_0V \mathcal{G}_0v^*S_2D_3S_2 v [\mathcal{R}_0-\mathcal{G}_0-iz\mG_1]V \mathcal{R}_0    + \mathcal{R}_0V [\mathcal{R}_0-\mathcal{G}_0-iz\mG_1]v^*S_2D_3S_2 v\mathcal{G}_0V \mathcal{R}_0 \\
       +  \mathcal{R}_0V [\mathcal{R}_0-\mathcal{G}_0-iz\mG_1]v^*S_2D_3S_2 v [\mathcal{R}_0-\mathcal{G}_0-iz\mG_1]V \mathcal{R}_0. 
\end{multline}
Recall the expansions of \eqref{eq:R0exp_1} for $\mathcal R_0$ given in Lemma~\ref{lem:R0exp}, picking $\ell=0+$ we have
$$
 E_1(z)(x,y):=\frac{1}{z^2} \big[\mathcal{R}_0- \mathcal{G}_0-iz \mathcal{G}_1\big]   =  \mathcal G_2- iz\mathcal{G}_3+
	 \widetilde O_2\big (z^{1+} ( r^{2+}+r^{-1})\big). 
$$
Therefore, $E_1$ satisfies the hypothesis of Lemma~\ref{lem:genericbound} with $\alpha=2+$. Hence using Lemma~\ref{lem:genericbound} with $E_1$  as above, $E=S_2D_3S_2$, and $E_2=\mG_0$ we see that the   contribution of the second summand in \eqref{eq:genericmain} to \eqref{eq:genericmainint} is $O(\la t\ra^{-3/2})$ provided that $\beta>7$. The contribution of third and fourth summands can be handled in the same manner.  
 
Now we turn to the first term in the equation \eqref{eq:genericmain}.   By Lemma~\ref{lem:eproj} below, we have the identity
$\mathcal{G}_0v^*S_2D_3S_2 v \mathcal{G}_0=  \mathcal{G}_0V\mathcal{G}_0v^*S_2D_3S_2 v \mathcal{G}_0V\mathcal{G}_0= -2m  P_m$. Also note that, by \eqref{eq:G0S2}, we have
$$
\mG_0VP_m=P_mV\mG_0=-P_m.$$ 
Hence, the first term can be rewritten as
\begin{multline}\label{eq:RVGPm}
 \frac1{2m}\mathcal{R}_0V \mathcal{G}_0v^*S_2D_3S_2 v \mathcal{G}_0V \mathcal{R}_0 \\ =  - P_m +[ \mathcal{R}_0-\mathcal{G}_0]VP_m 
  + P_mV  [ \mathcal{R}_0-\mathcal{G}_0]-  [ \mathcal{R}_0-\mathcal{G}_0]VP_mV [ \mathcal{R}_0-\mathcal{G}_0].
\end{multline}
The contribution of $P_m$ in \eqref{eq:resint} is zero since the integral is an odd principal value integral. 
Note that the contributions of the last three terms to the Stone's formula is bounded for small $t$ by an argument similar to the proof of Lemma~\ref{lem:genericbound}.  
The following lemma takes care of the   contribution of the second and third terms to \eqref{eq:genericmain} when $t>1$.
\begin{lemma} Under the assumptions of Theorem~\ref{thm:main2}, 
 \be  \label{eq:PmG}
 K_1(t)(x,y):=   \int_{-\infty}^{\infty} e^{-it \sqrt{z^2+m^2} } \frac{ \chi(z)}{z\sqrt{z^2+m^2}}
  \Big[ P_mV [\mathcal{R}_0-\mathcal{G}_0]\Big](z)(x,y) dz   
\ee
  is a finite rank operator and  $ \|K_1(t)  \|_{L^{1} \rightarrow L^{\infty}} \les t^{-\f12}$. 
\end{lemma}
\begin{proof}
First of all note that $K_t$ is finite rank since $P_m$ is independent of $z$ and finite rank by Lemma~\ref{lem:eproj}. 
Therefore, it suffices to bound  \eqref{eq:PmG} by $t^{-\f12}$ uniformly in $x,y$.

Using  the definition of $\mathcal R_0(\lambda)$, and the equations \eqref{resolventex}, \eqref{eq:G0 def}, 
we have 
\begin{multline} \label{eq:r0-g0exp} 
[\mathcal{R}_0-\mathcal{G}_0](z) (y_1,y)  = [-i\alpha \cdot \nabla + 2mI_{uc}  ]\Big[ \frac{e^{iz|y-y_1|}-1}{4\pi |y-y_1|}\Big] + \frac{e^{iz|y-y_1|}}{4\pi |y-y_1|}[\sqrt{z^2+m^2}-m] \\
 = \Big[ \frac{ i\alpha \cdot (y-y_1)}{|y-y_1|^2} + 2mI_{uc} \Big] \Big[ \frac{e^{iz|y-y_1|}-1}{4\pi |y-y_1|}\Big]+ \Big[ z \frac{\alpha \cdot (y-y_1)}{|y-y_1|}+  \widetilde O_2(z^2) \Big] \frac{e^{iz|y-y_1|}}{4\pi |y-y_1|}.
\end{multline}
It is easy to see that the contribution of the last term is $O(\la t\ra^{-\f12})$ by a single application of Lemma~\ref{lem:vdc}. Note that the uniform bound in $x$ uses the boundedness of eigenfunctions, see Lemma~\ref{lem:esa2}. 
 
Now we  estimate the contribution of the  first term in \eqref{eq:r0-g0exp}. Let 
\begin{multline}\widetilde{K}_t (y,y_1) := \int_{-\infty}^{\infty} e^{-it \sqrt{z^2+m^2}} \frac{ \chi(z)}{z \sqrt{z^2+m^2}}\Big[ \frac{ i\alpha \cdot (y-y_1)}{|y-y_1|^2} + 2mI_{uc} \Big] \Big[ \frac{e^{iz|y-y_1|}-1}{4\pi |y-y_1|}\Big]  dz \\
=i\int_{-\infty}^{\infty} e^{-it \sqrt{z^2+m^2}} \frac{ \chi(z)}{ \sqrt{z^2+m^2}}\Big[ \frac{ i\alpha \cdot (y-y_1)}{|y-y_1|^2} + 2mI_{uc} \Big]\int_0^{|y-y_1|} \frac{e^{izb}}{4\pi |y-y_1|} db dz.
\end{multline}
By Lemma~\ref{lem:vdc} we have 
$$
\Big| \int_{-\infty}^{\infty} e^{-it \sqrt{z^2+m^2}+izb} \frac{ \chi(z)}{ \sqrt{z^2+m^2}}dz \Big| \les t^{-\f12},
$$
uniformly in $b$. Therefore, by Fubini's theorem
$$
|\widetilde{K}_t (y,y_1)|\les t^{-\f12} \int_0^{|y-y_1|}(|y-y_1|^{-2}+|y-y_1|^{-1}) db \les t^{-\f12} (1+|y-y_1|^{-1}).
$$
Using the boundedness of eigenfunctions and the decay of $V$, we obtain 
$$|[P_mV \widetilde{K}_t] (x,y)|\les t^{-\f12},
$$ 
uniformly in $x, y$, which finishes the proof.
 \end{proof} 

Now, we consider  the   contribution of the last term in \eqref{eq:RVGPm} to \eqref{eq:genericmain} when $t>1$.
\begin{lemma}  \label{lem:R0-G0}Under the assumptions of Theorem~\ref{thm:main2}  we have 
 \begin{align*} 
 \sup_{x,y \in \R^3} \Bigg| \int_{\infty}^{-\infty} e^{-it \sqrt{z^2+m^2} } \frac{\chi(z)}{z \sqrt{z^2+m^2}}
  \Big[[\mathcal{R}_0-\mathcal{G}_0]VP_mV [\mathcal{R}_0-\mathcal{G}_0]\Big](z)(x,y) dz \Bigg| \les t^{-\f32}.
\end{align*}
\end{lemma} 
\begin{proof}
Using \eqref{eq:r0-g0exp}, we have   
\begin{align*}
 \frac{ \mathcal{R}_0 - \mathcal{G}_0   }{z} &= M+N,\,\,\,\text{ where}\\
M(z)(x,x_1)& :=[-i\alpha \cdot \nabla] \bigg[  \frac{e^{iz|x-x_1|} -1  }{4\pi z|x-x_1|} \bigg] + [ \sqrt{z^2+m^2} -m] \frac{e^{iz|x-x_1|} }{4\pi z|x-x_1|},
\\ 
N(z,|x-x_1|)&:= 2mI_{uc} \bigg[  \frac{e^{iz|x-x_1|} -1   }{4\pi z|x-x_1| } \bigg].
\end{align*}
We will see that the operator $M$ satisfies suitable bounds. However, 
the operator $N$ does not, instead we need to use  that $ \mG_1VP_m = \frac{im}{2\pi}M_{uc} VP_m=0$ and $P_mV\mG_1=\frac{im}{2\pi} P_mVM_{uc}=0$. Hence,
we can replace the term $N(z,|x-x_1|)$ with 
$$\mathcal{N}(z)(x,x_1) = N(z,|x-x_1|) - N(z, \la x \ra)$$ 
on both sides of $VP_mV$. 

The following lemma contains the required bounds:
\begin{lemma}\label{lem:Mbounds} We have 
\begin{align}
 &\label{eq:boundM}  M(z)(x,x_1)  +  \mathcal{N}(z)(x,x_1) = \big[ \la x_1 \ra+|x-x_1|^{-1}\big] O\big( z\big), \\ 
& \label{eq:boundMprime} \partial_z \big(M(z)+\mathcal N(z)\big)(x,x_1)  = e^{iz|x-x_1|} \big[\la x_1\ra+|x-x_1|^{-1} \big] \widetilde O_1(1) + \la x_1\ra \widetilde O_1(1).  
\end{align}  
\end{lemma}
The proof of this lemma is given below. We finish the proof of Lemma~\ref{lem:R0-G0} using Lemma~\ref{lem:Mbounds}.

We start with applying integration by parts to the integral 
\begin{align} \label{eq:MM}
\int_{-\infty}^{\infty} e^{-it \sqrt{z^2+m^2} } \frac{ z \chi(z)}{\sqrt{z^2+m^2}} \big(M(z)+\mathcal N(z)\big)(x,x_1) \big(M(z)+\mathcal N(z)\big)(y,y_1) dz.
\end{align}
We only consider the case when the derivative falls on $\big(M(z)+\mathcal N(z)\big)(y,y_1)$. The other cases are similar. 
We therefore consider 
$$
t^{-1} \int_{-\infty}^{\infty} e^{-it \sqrt{z^2+m^2}  }   \chi(z)  \big(M(z)+\mathcal N(z)\big)(x,x_1) \partial_z  \big(M(z)+\mathcal N(z)\big)(y,y_1) dz.
$$ 
Using Lemma~\ref{lem:Mbounds},  we can write this integral as 
\begin{align*}
&t^{-1} (\la x_1\ra+|x-x_1|^{-1}) (\la y_1\ra+|y-y_1|^{-1})  \int_{-\infty}^{\infty} e^{-it \sqrt{z^2+m^2}+iz|y-y_1| }   \chi(z)  \widetilde O_1(z) dz \\
&+ t^{-1} (\la x_1\ra+|x-x_1|^{-1})  \la y_1\ra   \int_{-\infty}^{\infty} e^{-it \sqrt{z^2+m^2} }   \chi(z)  \widetilde O_1(z)  dz.
\end{align*}
Applying Lemma~\ref{lem:vdc} with the phase  $\phi(z)= -t \sqrt{z^2+m^2}+z|y-y_1|$ and the phase $\phi(z)= -t \sqrt{z^2+m^2} $ in each integral respectively, yields the bound
$$
	t^{-\f32} (\la x_1\ra+|x-x_1|^{-1}) (\la y_1\ra+|y-y_1|^{-1}). 
$$
This establishes the claim since $(\la x_1\ra+|x-x_1|^{-1})   |V (x_1)|\in L^2_{x_1}$ uniformly in $x$.
\end{proof}

\begin{proof}[Proof of Lemma~\ref{lem:Mbounds}]
We start with $M$. It is clear that second summand in the definition of $M$ satisfies \eqref{eq:boundM}  and \eqref{eq:boundMprime}.
Let $p=|x-x_1|$. The first summand in the definition of $M$ is (omitting the factors of $\frac{1}{4\pi}$) 
\begin{align} \label{eq:M}
 [-i\alpha \cdot \nabla] \Big[  \frac{e^{iz|x-x_1|} -1 }{z|x-x_1|} \Big]  = \frac{ i\alpha \cdot (x-x_1)}{p^2} \bigg[ \frac{e^{izp}-1}{zp} \bigg]  + \frac{ \alpha \cdot (x-x_1) e^{izp}}{p^2}=O(p^{-1}). 
\end{align}
Note that if $|z|p \gtrsim1 $,  this term is bounded by $|z|$.  
If $|z|p \les 1$, then we have $e^{izp} = 1+izp   -\frac12 z^2p^2 + \widetilde O_2(  z^3p^3)$. Plugging this into \eqref{eq:M}, we obtain
\begin{align} \label{eq:Msmall}
  [-i\alpha \cdot \nabla] \Big[  \frac{e^{iz|x-x_1|} -1 }{z|x-x_1|} \Big] =
[-i\alpha \cdot \nabla] \big[ i- \frac{zp}{2} +\widetilde{O}_2(z^2p^2)\big] =O(z). 
\end{align}
Taking the $z$ derivative of the first summand, we  obtain 
\begin{align*}
 \partial_z  [-i\alpha \cdot \nabla] \Big[  \frac{e^{iz|x-x_1|} -1 }{z|x-x_1|} \Big]&= \frac{i\alpha\cdot(x-x_1)}{p^2} \bigg[ \frac{e^{izp}i p}{zp} - \frac{e^{izp}-1}{z^2p} \bigg]  + \frac{ i \alpha \cdot (x-x_1) e^{izp}}{p} \\
 &= e^{izp} \frac{ i \alpha \cdot (x-x_1) }{p} \Big[   \frac{e^{-izp}-1+izp}{z^2p^2}  +1\Big] = e^{izp} \widetilde O_1 (1). 
\end{align*}
The last bound follows by noting that the numerator is $\widetilde O_1(z^2p^2)$ by a Taylor expansion when $zp\les 1$.

To obtain the bounds for $\mathcal N$, consider the function 
$$
	f(r)=\frac{e^{izr}-1 }{zr}.
$$
Then, ignoring the constant factors,  $\mathcal N(z)= f(p)-f(q)$, where $q:=\la x\ra$.
Using the mean value theorem  we have 
\begin{align}\label{eq:OboundN}
|\mathcal{N}(z)(x,x_1)|& \les \frac{|p-q|}{|z|} \max_r\bigg| \frac{ize^{izr} - e^{izr}+1 }{ r^2}   \bigg|  \les |z| \la x_1 \ra.
\end{align}
In the last inequality we used the fact  that  $\big( izre^{izr} - e^{izr}+1\big)=\widetilde O_1(z^2 r^2) $.

Further, 
\begin{align*}
\partial_z \mathcal{N}(z)(x,x_1) &= \frac{izpe^{izp}-e^{izp}+1}{z^2p}-\frac{izqe^{izq}-e^{izq}+1}{z^2q} \\
&= i e^{izp}  \f{1-e^{iz(q-p)}   }{z}  -\f {\mathcal N(z)}{z}  .
\end{align*}
By the mean value theorem we have
$$
\f{1-e^{iz(q-p)}   }{z} = \la x_1\ra \widetilde O_1(1). 
$$
Finally, note that the inequality  \eqref{eq:OboundN} and the calculation 
$$
\Big|\partial_z\f {\mathcal N(z)}{z}\Big| =  \Big|-{\f 1 {z^2}} \mathcal{N}(z)+ \f {\partial_z \mathcal N(z)}{z} \Big| \les \f{\la x_1\ra }{|z|}
$$
imply that  
$$
 \f {\mathcal N(z)}{z}=\la x_1\ra \widetilde O_1(1).
$$
\end{proof}

This completes the proof of Proposition~\ref{prop:res2}.\end{proof}
 
\begin{proof}[Proof of Theorem~\ref{thm:main2}] 
The proof follows as in the proof of Theorem~\ref{thm:main1} using Proposition~\ref{prop:res2} instead of Proposition~\ref{prop:res1}. 
\end{proof}

\begin{rmk}
	
	This method  also applies to the analysis of the  Schr\"odinger 
	operator considered in \cite{ES} and \cite{Yaj3}.  In particular, it implies  that the $t^{-\f12}$ term   is a time dependent finite rank 
	operator when zero is not a regular point of the spectrum. This gives an alternative proof to Yajima's theorem in \cite{Yaj3}. In \cite{ES}, such a result was obtained only in the case when there is a resonance of the first kind.   
	
\end{rmk}

\section{Classification of threshold spectral subspaces} \label{sec:esa}

 \begin{lemma}\label{lem:esa1}
Assume $|v(x)| \les \la x \ra ^{-{\f32}-} $. Then $\phi \in S_1 L^2(\R^3) \setminus \{0\} $ if and only if $\phi= Uv \psi $ for some  $\psi \in L^{2,-{\f 12}-}(\R^3)\setminus \{0\} $ which  is a distributional solution of  $ ( D_m+V-mI)\psi=0 $. Furthermore, $\psi=- \mathcal{G}_0v^{*}\phi$ and $\psi$ is a bounded function.
\end{lemma}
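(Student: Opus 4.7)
The proof is an equivalence, and both directions hinge on the identity
\[
  (D_m - mI)\mathcal{G}_0 = (D_m-mI)(D_m+mI)G_0 = -\Delta G_0 = I,
\]
which identifies $\mathcal{G}_0$ as a fundamental solution of $D_m - mI$. For the forward direction, I start from $A_0\phi = U\phi + v\mathcal{G}_0 v^*\phi = 0$, rearrange it as $U\phi = -v\mathcal{G}_0 v^*\phi$, and define $\psi := -\mathcal{G}_0 v^*\phi$. Then $v\psi = U\phi$, and multiplying by $U$ yields $\phi = Uv\psi$; in particular $V\psi = v^*Uv\psi = v^*U^2\phi = v^*\phi$. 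Applying $D_m - mI$ to the definition of $\psi$ and invoking the identity above gives $(D_m-mI)\psi = -v^*\phi = -V\psi$, so $(D_m+V-mI)\psi=0$ distributionally. Non-triviality $\psi \neq 0$ follows at once from $\phi = Uv\psi \neq 0$.

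To check $\psi \in L^{2,-1/2-}\cap L^\infty$, I use the pointwise bound $|\mathcal{G}_0(x,y)| \les |x-y|^{-2} + |x-y|^{-1}$ together with $|v^*|\les \la\cdot\ra^{-3/2-}$ and $\phi \in L^2$. Cauchy--Schwarz shows $v^*\phi \in L^1\cap L^2$ with weighted decay, and Lemma~\ref{lem:potential2} then controls the convolution $\int (|x-y|^{-2}+|x-y|^{-1})|v^*(y)\phi(y)|\,dy$ uniformly in $x$ after splitting into near and far regions: the far piece is handled by Cauchy--Schwarz against the decay of $v^*$, while the singular near piece is absorbed using the local $L^p$ integrability of $|x-y|^{-l}$ for $l<3$ against the smooth weight $\la y\ra^{-3/2-}$.

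For the converse, given $\psi \in L^{2,-1/2-}\setminus\{0\}$ with $(D_m+V-mI)\psi=0$, I define $\phi := Uv\psi$; the decay $|v|\les \la\cdot\ra^{-3/2-}$ combined with the $L^{2,-1/2-}$ bound on $\psi$ makes $\phi \in L^2$. The crucial step is to prove $\psi = -\mathcal{G}_0 v^*\phi$, from which $A_0\phi=0$ is immediate by multiplying by $v$ and then $U$. Setting $u := \psi + \mathcal{G}_0 v^*\phi$, the PDE for $\psi$ together with the inversion identity gives $(D_m-mI)u=0$; applying $D_m+mI$ then produces $-\Delta u = 0$, so each component of $u$ is harmonic.

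The main obstacle is the Liouville-type argument used to conclude $u \equiv 0$. After establishing $u \in L^{2,-1/2-}$ via the same weighted convolution estimates as in the forward direction, I apply the mean value property to each (harmonic) component:
\[
  |u(x)|^2 \les \frac{1}{R^3}\int_{B_R(x)} \la y\ra^{1+}\la y\ra^{-1-}|u(y)|^2\,dy \les \frac{(|x|+R)^{1+}}{R^3}\,\|\la\cdot\ra^{-1/2-}u\|_{L^2}^2.
\]
Choosing $R = \max(|x|,1)$ forces $|u(x)|\to 0$ as $|x|\to\infty$, so the bounded harmonic function $u$ vanishes identically by Liouville's theorem. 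Hence $\psi = -\mathcal{G}_0 v^*\phi$ as required, and $\phi \neq 0$ because $\phi=0$ would force $\psi = -\mathcal{G}_0\cdot 0 = 0$, contradicting the hypothesis.
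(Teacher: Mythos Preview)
Your overall architecture matches the paper's: define $\psi=-\mathcal G_0v^*\phi$, verify the PDE via $(D_m-mI)(D_m+mI)G_0=-\Delta G_0=I$, and for the converse show $u=\psi+\mathcal G_0v^*\phi$ is harmonic and vanishes. Your Liouville argument via the mean value property is a clean replacement for the paper's citation of \cite{JenKat}.

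There is, however, a genuine gap in your boundedness claim. You assert that
\[
\int_{\R^3}\big(|x-y|^{-2}+|x-y|^{-1}\big)\,|v^*(y)\phi(y)|\,dy
\]
is bounded uniformly in $x$, handling the near piece ``using the local $L^p$ integrability of $|x-y|^{-l}$ for $l<3$ against the smooth weight $\la y\ra^{-3/2-}$.'' But the weight $\la y\ra^{-3/2-}$ is bounded on $B_1(x)$ and gives you nothing locally: you only know $v^*\phi\in L^p$ for $p\le 2$ (H\"older with $v^*\in L^q$, $\phi\in L^2$), while $|x-y|^{-2}\in L^p_{loc}(\R^3)$ only for $p<3/2$, so any H\"older pairing requires the other factor in $L^{q}$ with $q>3$. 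The $|x-y|^{-2}$ singularity is genuinely too strong to map $L^2$ data to $L^\infty$ in one step. Lemma~\ref{lem:potential2} does not help here either, since it concerns integrals of the form $\int\la x\ra^{-\beta}|x-u_1|^{-k}|x-u_2|^{-l}\,dx$, not a single Riesz potential against $L^2$ data.

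The paper circumvents this by iterating the identity $\psi=-\mathcal G_0 V\psi$ once to obtain $\psi=\mathcal G_0 V\mathcal G_0 V\psi$. The kernel of $\mathcal G_0 V\mathcal G_0$ has local singularity at worst $|x-y|^{-1}$ (by Lemma~\ref{lem:potential2} with $k=l=2$), and then Remark~\ref{rmk:potential} gives $\sup_x\|\mathcal G_0 V\mathcal G_0(x,\cdot)\|_{L^{2,-\sigma}_y}\les 1$, so that $\mathcal G_0 V\mathcal G_0:L^{2,\sigma}\to L^\infty$; since $V\psi\in L^{2,\sigma}$ for large $\sigma$, boundedness follows. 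You need this iteration (or an equivalent smoothing step) to close the argument. Separately, your treatment of $\psi\in L^{2,-1/2-}$ is only sketched; the paper obtains it by splitting $\psi=\psi_1+\psi_2$ as in \eqref{eq:g1g2} and invoking weighted mapping properties of fractional integral operators from \cite{Jen}, which is more robust than the pointwise approach you indicate.
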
 
  \begin{proof} If  $\phi \in S_1 L^2(\R^3)  \setminus \{0\}, $ then by Definition~\ref{def:res defn}, $(U+ v \mathcal{G}_0 v^{*}) \phi =0$.   Since $U^2=I$,  
  \begin{align}\label{S1G0}
 \phi= -U v \mathcal{G}_0 v^{*} \phi = U v \psi,\,\, \text{ where }\,\,  \psi:=-  \mathcal{G}_0 v^{*} \phi.
   \end{align}
 Using \eqref{eq:G0 def}  and \eqref{eq:Dmpm} with $\lambda=m$, we obtain
  \begin{multline}\label{eq:eigeniden}
(D_m - mI) \psi   = -(D_m - mI)\mathcal{G}_0 v^{*}\phi = -(D_m - mI)(D_m +mI)G_0v^{*}\phi \\
 = \Delta G_0 v^{*}\phi = - v^{*} \phi =-v^*Uv\psi =-V\psi.
\end{multline}  
Therefore,    $ ( D_m +V -mI)\psi=0 $. In the fourth equality above, 
we used  the fact that $\Delta G_0 v^{*}\phi=-v^{*}\phi$ holds since  $v^{*}\phi \in L^{2,\f32+}$, see Lemma~2.4 in \cite{JenKat}.  

Now we prove that   $\psi \in L^{2,-{\f 12}-}(\R^3) $. Note that 
\begin{multline} 
 \psi =- [ - i\alpha \cdot \nabla + 2m I_{uc} ] G_0 v^{*} \phi 
  =- [ - i\alpha \cdot \nabla + 2m I_{uc} ]  \frac{1}{4 \pi} \int_{\R^3} \frac{v^{*}(y) \phi (y)}{|x-y|} \\
  =   \frac{1}{4 \pi} \int_{\R^3}  i \alpha \cdot(x-y) \frac{v^{*}(y) \phi (y)}{|x-y|^3} - 2m I_{uc}  \frac{1}{4 \pi} \int_{\R^3} \frac{v^{*}(y) \phi (y)}{|x-y|} 
 =:  \psi_1+ \psi_2. \label{eq:g1g2}
 \end{multline}
 Since the integrals in equation can be bounded by   fractional integral operators, we can use Lemma 2.3 in \cite{Jen}. We have $\psi_1\in L^2(\R^3) \subseteq L^{2,-{\f 12}-}(\R^3)  $ provided $|v(x)| \les \la x \ra ^{-1} $; and  $\psi_2\in L^{2,-{\f 12}-}(\R^3)$ provided $|v(x)| \les \la x \ra ^{-{\f32}-} $. 
 
 Conversely, assume that $\phi= Uv \psi $ for some $\psi \in L^{2,-{\f 12}-}(\R^3) $ satisfying $ (D_m+V-mI)\psi=0 $. 
Then $\phi\in L^{2,1+}$, and by a calculation similar to \eqref{eq:eigeniden}, we have 
$$
(D_m - mI) \psi   = -V\psi= - v^{*} \phi= \Delta G_0 v^{*}\phi = -(D_m - mI)(D_m +mI)G_0v^{*}\phi =  -(D_m - mI)\mathcal{G}_0 v^{*}\phi.
$$ 
Thus, also using \eqref{eq:Dmpm} with $\lambda=-m$, we have 
$$ \Delta (\psi +  \mathcal{G}_0 v^{*}\phi)=(D_m+mI)(D_m-mI)(\psi +  \mathcal{G}_0 v^{*}\phi)  = 0. 
$$
Noting that $\psi +  \mathcal{G}_0 v^{*}\phi\in L^{2,-{\f 12}-}(\R^3)$, we conclude that (see \cite{JenKat})
 $  \psi +   \mathcal{G}_0 v^{*}\phi=0.$
Notice that this also implies that the free Dirac has no threshold resonances.
Therefore,
$$
 (U+ v \mathcal{G}_0 v^{*}) \phi = v\psi  +v\mathcal{G}_0 v^* \phi=0,
$$
and hence $\phi\in S_1L^2$.
  
Since $\phi= Uv \psi $, if $\phi\neq 0 $, then $ \psi\neq 0$. The reverse implication  follows from $\psi=- \mathcal{G}_0 v^{*} \phi$.

Finally, using \eqref{S1G0} we have $\psi =-\mG_0V\psi $. Iterating this identity we obtain $\psi =\mG_0V\mG_0V\psi $. Therefore, by a calculation identical to the one in Remark~\ref{rmk:potential}, we see that $\psi $ bounded.
\end{proof}

 \begin{lemma}\label{lem:esa2} Suppose $|v(x)| \les \la x \ra ^{-{\f32}-} $. Fix $\phi=Uv\psi\in S_1L^2$, where $\psi \in L^{2,-{\f 12}-}(\R^3)\setminus \{0\} $  is a distributional solution of  $ ( D_m+V-mI)\psi=0 $. Then $\phi \in S_2  L^2(\R^3) $ if and only if   $\psi \in  L^2(\R^3)$.  Moreover, any threshold eigenfunction, $\psi$, is a bounded function.
  \end{lemma}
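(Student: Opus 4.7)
The plan is to translate the algebraic condition $\phi\in S_2L^2$ into a moment-vanishing condition on $V\psi$, and then to read off $L^2$-integrability of $\psi$ from the large-$|x|$ asymptotics of the representation $\psi=-\mathcal{G}_0 V\psi$ (which follows from Lemma~\ref{lem:esa1} via $v^*\phi=v^*Uv\psi=V\psi$).

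First I would exploit the fact that $\mathcal{G}_1(x,y)=\frac{m}{2\pi}M_{uc}$ is a constant, i.e.\ $x,y$-independent, matrix kernel. Setting $c:=\int_{\R^3}v^*(y)\phi(y)\,dy=\int_{\R^3}V\psi\,dy$, a direct computation gives $(v\mathcal{G}_1v^*\phi)(x)=\tfrac{m}{2\pi}v(x)M_{uc}c$, hence
$$
\langle v\mathcal{G}_1v^*\phi,\phi\rangle \;=\; \tfrac{m}{2\pi}\|M_{uc}c\|_{\C^4}^2.
$$
Since $S_1v\mathcal{G}_1v^*S_1$ is non-negative self-adjoint on $S_1L^2$ and $\phi=S_1\phi$, vanishing of this quadratic form is equivalent to $\phi$ lying in its kernel, so
$$
\phi\in S_2L^2 \quad\Longleftrightarrow\quad M_{uc}\int_{\R^3}V\psi\,dy = 0.
$$

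Next I would expand $\psi=-\mathcal{G}_0 V\psi$ at infinity using the explicit kernel \eqref{eq:G0 def} together with $|x-y|^{-1}=|x|^{-1}+O(|y|\,|x|^{-2})$ and $|x-y|^{-2}=O(|x|^{-2})$ for $|y|\ll|x|$. Because $|V|\les\la y\ra^{-3-}$ and $\psi$ is bounded (Lemma~\ref{lem:esa1}), $V\psi$ lies in $L^1$ with finite first moment, which legitimises the expansion
$$
\psi(x)\;=\;-\frac{m\,M_{uc}}{2\pi|x|}\int_{\R^3}V\psi\,dy \;+\; R(x), \qquad |R(x)|\les|x|^{-2}\quad\text{for }|x|\gg 1.
$$
Since $|x|^{-2}\in L^2(\{|x|>1\})$ in three dimensions while $|x|^{-1}\notin L^2$ there, and $\psi$ is locally bounded, this expansion forces the equivalence $\psi\in L^2(\R^3)\Longleftrightarrow M_{uc}\int V\psi\,dy=0$, which by the preceding paragraph is $\phi\in S_2L^2$. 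The ``moreover'' claim is then immediate: a threshold eigenfunction $\psi\in L^2\subset L^{2,-1/2-}$ satisfies the hypotheses of the converse direction of Lemma~\ref{lem:esa1}, so $\phi:=Uv\psi\in S_1L^2$, and the last sentence of that lemma yields $\psi\in L^\infty(\R^3)$.

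The main obstacle will be establishing the uniform remainder bound $|R(x)|\les|x|^{-2}$. I would split the defining integral at $|y|=|x|/2$. In the region $|y|\leq|x|/2$ a Taylor expansion of $|x-y|^{-1}$ and the bound $|\alpha\cdot(x-y)|/|x-y|^3\les|x|^{-2}$ reduce matters to the first moment $\int|y|\,|V\psi(y)|\,dy$, which is finite under the decay hypothesis. In the region $|y|>|x|/2$ the polynomial tail of $V\psi$ absorbs the singularity of $\mathcal{G}_0$, yielding again an $|x|^{-2}$ contribution. Keeping track of both the $\alpha\cdot(x-y)/|x-y|^3$ and the $M_{uc}/|x-y|$ pieces of $\mathcal{G}_0$ simultaneously, rather than the clean scalar expansion available for the Schrödinger case, is where I expect most of the bookkeeping to concentrate.
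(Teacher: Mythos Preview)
Your strategy is the same as the paper's: reduce $\phi\in S_2L^2$ to the vanishing of $M_{uc}\int v^*\phi$, and read off the $L^2$-membership of $\psi$ from the large-$|x|$ behaviour of $\psi=-\mathcal G_0 v^*\phi$. The difference is in how the remainder is controlled, and there your argument has a genuine gap under the stated decay $|v(x)|\les\la x\ra^{-3/2-}$, i.e.\ $|V\psi(y)|\les\la y\ra^{-3-}$.

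The claimed uniform bound $|R(x)|\les|x|^{-2}$ does not follow. In your inner region $|y|\le|x|/2$ you invoke the first moment $\int|y|\,|V\psi(y)|\,dy$, but with $|V\psi|\les\la y\ra^{-3-2\epsilon}$ this integral behaves like $\int_1^\infty r^{-2\epsilon}\,dr$, which diverges for every $\epsilon\le\tfrac12$; the best one gets there is $|x|^{-2}\int_{|y|\le|x|/2}|y|\,|V\psi|\les|x|^{-1-2\epsilon}$. Likewise in the outer region $|y|>|x|/2$ the $|x-y|^{-1}$ kernel against $\la y\ra^{-3-2\epsilon}$ yields only $O(|x|^{-1-2\epsilon})$. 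Thus $R(x)=O(|x|^{-1-})$, and since $|x|^{-1-2\epsilon}\notin L^2(\{|x|>1\})$ in $\R^3$ unless $\epsilon>\tfrac14$, the pointwise route does not show $R\in L^2$, and the equivalence $\psi\in L^2\Leftrightarrow M_{uc}c=0$ cannot be concluded this way.

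The paper sidesteps the pointwise estimate entirely. It writes $\psi=\psi_1+\psi_2$ with $\psi_1$ (the $\alpha\cdot\nabla$ piece) already in $L^2$ by the fractional-integral bounds of \cite{Jen}, and for the $I_{uc}$ piece it subtracts $\la x\ra^{-1}$ rather than $|x|^{-1}$ and appeals to \cite[Lemma~6]{ES}, which shows directly that the operator with kernel $\big(\tfrac{1}{|x-y|}-\tfrac{1}{\la x\ra}\big)$ maps $v^*\phi\in L^{2,\frac32+}$ into $L^2$. This $L^2$-mapping argument is what buys the conclusion at the minimal decay $|v|\les\la x\ra^{-3/2-}$; your pointwise scheme would require roughly $|v|\les\la x\ra^{-2-}$ to go through.
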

  
  \begin{proof} The boundedness of $\psi$ and the equality $ ( D_m+V-mI)\psi=0 $ were obtained in the   previous lemma.  
  First note that if $\phi  \in S_2 L^2(\R^3) $, namely $ S_1v\mathcal{G}_1v^*\phi  =0$, then 
  \begin{align*}
    0= \la vM_{uc} v^*\phi ,\phi  \ra =  \la M_{uc} v^*\phi , M_{uc}v^*\phi  \ra_{\C^4} = \|M_{uc} v^* \phi \|^2_{\C^4}, 
  \end{align*} 
Hence, $M_{uc} v^*\phi =0$. It is also clear that  if $M_{uc} v^*\phi =0$, then $\phi\in S_2L^2$.
  
 Also note that in the proof of Lemma~\ref{lem:esa1}, we showed that $\psi=\psi_1+\psi_2$ and  $\psi_1 \in L^2(\R^3) $. 
   Therefore it suffices to prove that $M_{uc}v^*\phi=0$ if and only if $\psi_2\in L^2$. 
Recalling \eqref{eq:g1g2} we can write  $\psi_2$  as
\be\label{eq:psi2}
  \psi_2(x) =    \frac{m}{2 \pi} I_{uc}  \int_{\R^3} v^{*}(y) \phi (y) \bigg[ \frac{1}{|x-y|} - \frac{1}{\la x\ra} \bigg] dy
  + \frac{m}{2 \pi \la x \ra}  [M_{uc}v^* \phi]. 
\ee
Using  \cite[Lemma~6]{ES} we see that the first integral above is in $L^2(\R^3)$. Since $\f1{\la x\ra}\not \in L^2(\R^3)$, we conclude that $\psi_2\in L^2$ if and only if $ M_{uc}v^* \phi=0$.
  \end{proof}
A useful consequence of this proof is the following orthogonality condition and the fact that the rank of $S_1-S_2$ is at most two.
\begin{corollary}\label{cor:S2 orth}
	We have the identities
	\begin{align*}
		S_2v\mathcal G_1=\mathcal G_1v^*S_2=\frac{m}{2\pi}M_{uc}v^* S_2=\frac{m}{2\pi}S_2vM_{uc}=0.
	\end{align*}
\end{corollary}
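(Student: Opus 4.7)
The plan is to reduce all four identities to a single fact, namely $M_{uc}v^*S_2=0$, and then obtain the other three by unwinding $\mathcal G_1=\tfrac{m}{2\pi}M_{uc}$ and by taking adjoints. First I would observe that by \eqref{eq:G1 def} we have $\mathcal G_1 v^* = \tfrac{m}{2\pi}M_{uc}v^*$ pointwise, so the identity $\mathcal G_1 v^* S_2 = \tfrac{m}{2\pi}M_{uc}v^*S_2$ is automatic and the corresponding equality on the left follows by transposing. Hence it suffices to establish $M_{uc}v^*S_2=0$ together with the self-adjointness of $S_2$.

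Next I would verify that $S_2$ is self-adjoint. The operator $S_1v\mathcal G_1 v^*S_1 = \tfrac{m}{2\pi} S_1 v M_{uc} v^* S_1$ is self-adjoint on $S_1 L^2$, since $S_1$ is the Riesz projection onto $\ker A_0$ for the self-adjoint operator $A_0$ (hence $S_1^*=S_1$), and $M_{uc}$ is a real diagonal orthogonal projection on $\mathbb C^4$. Being the orthogonal projection onto the kernel of a self-adjoint operator, $S_2$ is self-adjoint.

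Now fix an arbitrary $\phi\in S_2 L^2$. Since $S_2\leq S_1$, we have $S_1\phi=\phi$, so by the definition of $S_2$,
\begin{equation*}
0=\langle S_1 v\mathcal G_1 v^* S_1 \phi,\phi\rangle = \langle v\mathcal G_1 v^*\phi,\phi\rangle = \tfrac{m}{2\pi}\langle M_{uc} v^*\phi,v^*\phi\rangle_{\mathbb C^4}.
\end{equation*}
Since $M_{uc}$ is an orthogonal projection on $\mathbb C^4$, the right-hand side equals $\tfrac{m}{2\pi}\|M_{uc}v^*\phi\|_{\mathbb C^4}^2$, so $M_{uc}v^*\phi = 0$. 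As $\phi\in S_2 L^2$ was arbitrary, this yields $M_{uc}v^*S_2=0$, and therefore $\mathcal G_1 v^* S_2 = 0$. Taking adjoints and using that $S_2=S_2^*$ and $M_{uc}=M_{uc}^*$ (resp.\ that $\mathcal G_1(x,y)$ is symmetric) gives the remaining two identities $S_2 v M_{uc}=0$ and $S_2 v \mathcal G_1=0$.

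There is no serious obstacle here; the only point one must be careful about is the self-adjointness of $S_2$, which in turn rests on the self-adjointness of $A_0$ and of $S_1 vM_{uc}v^*S_1$. Once that is in place the argument is simply the observation that $\mathcal G_1$ is a positive multiple of the orthogonal projection $M_{uc}$, so a vanishing diagonal matrix element forces the corresponding vector to vanish.
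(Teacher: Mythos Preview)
Your proof is correct and follows essentially the same argument as the paper. The key computation $0=\langle v\mathcal G_1 v^*\phi,\phi\rangle=\tfrac{m}{2\pi}\|M_{uc}v^*\phi\|_{\mathbb C^4}^2$ for $\phi\in S_2L^2$ is exactly what appears in the proof of Lemma~\ref{lem:esa2}, from which the corollary is stated as an immediate consequence; your additional remarks on self-adjointness of $S_2$ and the passage to the adjoint identities simply make explicit what the paper leaves implicit.
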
  

\begin{corollary}\label{cor:2d}
Assume $|v(x)| \les \la x \ra ^{-{\f32}-}$.  Then the rank of $S_1-S_2$ is at most two.
\end{corollary}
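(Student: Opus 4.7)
The plan is to realize the quotient $S_1 L^2 / S_2 L^2$ as a subspace of a concretely two-dimensional space via the map $\phi \mapsto M_{uc} v^* \phi$, and then apply the rank--nullity theorem.

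First, I would define the linear map
\[
T: S_1 L^2(\R^3) \longrightarrow \C^4, \qquad T\phi = \int_{\R^3} M_{uc}\, v^*(y)\, \phi(y)\, dy.
\]
Because the matrix $M_{uc}$ annihilates the last two coordinates of $\C^4$, the image of $T$ is contained in the two-dimensional subspace $M_{uc}\C^4 \subset \C^4$; in particular $\dim(\mathrm{Range}\, T) \leq 2$. The integral defining $T$ makes sense since $\phi \in S_1 L^2$ has $v^*\phi \in L^{2,3/2+}$ under the decay hypothesis $|v(x)| \les \la x\ra^{-3/2-}$, so $v^*\phi \in L^1$.

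Next I would identify $\ker T$. By Lemma~\ref{lem:esa2} (and the discussion in its proof), a vector $\phi \in S_1 L^2$ lies in $S_2 L^2$ if and only if the constant $4$-vector $\int M_{uc} v^*(y) \phi(y) \, dy$ vanishes: the pairing $\la v M_{uc} v^* \phi, \phi\ra = \|M_{uc} v^*\phi\|^2_{\C^4}$ (where the norm refers to this constant vector) is zero exactly when $\phi \in S_2 L^2$. Consequently $\ker T = S_2 L^2$.

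Since $S_2 \leq S_1$ (Remark~\ref{rmk:negative}(ii)), the operator $S_1 - S_2$ is the orthogonal projection onto $S_1 L^2 \ominus S_2 L^2$, whose dimension equals $\dim(S_1 L^2) - \dim(S_2 L^2) = \dim(S_1 L^2 / \ker T)$. The rank--nullity theorem then yields
\[
\mathrm{rank}(S_1 - S_2) = \dim(\mathrm{Range}\, T) \leq \dim(M_{uc}\C^4) = 2,
\]
which is the claim. The only real step that requires care is confirming the kernel identification; but this is exactly the content of Lemma~\ref{lem:esa2}, which has already been established, so no additional obstacle remains.
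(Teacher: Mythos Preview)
Your proof is correct and follows essentially the same approach as the paper's: both identify the linear map $\phi \mapsto M_{uc} v^*\phi \in M_{uc}\C^4 \cong \C^2$ as having kernel exactly $S_2 L^2$, so that $S_1 L^2/S_2 L^2$ embeds in a two-dimensional space. The paper reaches this via the spatial asymptotic of $\psi = -\mathcal{G}_0 v^*\phi$, writing the non-$L^2$ part as $\frac{1}{\la x\ra}(a_1,a_2,0,0)^T$ with $a_j = \frac{m}{2\pi}\int [v^*\phi]_j$, whereas you invoke rank--nullity directly; the content is the same.
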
  
\begin{proof}
	We consider the representation in \eqref{eq:g1g2}.  We have already shown that $\psi_1\in L^2$. By \eqref{eq:psi2} and the discussion following it, we can write  $\psi_2$ as 
$$
 \frac{m}{2 \pi \la x \ra}  [M_{uc}v^* \phi]+O_{L^2}(1)=\frac{1}{ \la x\ra} (a_1, a_2, 0, 0)^T+O_{L^2}(1).
$$	
 The constants $a_j= \frac{m}{2\pi} \int_{\R^3} [v^*(y)\phi(y)]_j \, dy$ are finite by the assumed decay of $v^*$.
\end{proof}

  \begin{lemma}\label{lem:esa3}
Assume $|v(x)| \les \la x \ra ^{-{\f52}-}$. Then $S_2v\mathcal{G}_2v^*S_2$ is invertible  as an operator in $S_2 L^2(\R^3) $.  
   \end{lemma}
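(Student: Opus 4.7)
The plan is to show that $S_2v\mathcal{G}_2 v^*S_2$ is negative definite on the finite-dimensional space $S_2L^2(\R^3)$ (finite dimensionality follows from Remark~\ref{rmk:negative}(ii)), which immediately forces invertibility. Since $G_0$ and $G_2$ are real symmetric scalar kernels, $I_{uc}$ is Hermitian, and the matrices $\alpha_k$ are Hermitian, the operator $\mathcal{G}_2 = [-i\alpha\cdot\nabla + 2mI_{uc}]G_2-\tfrac{1}{2m}G_0$ is self-adjoint, and hence so is $S_2v\mathcal{G}_2 v^*S_2$. I will therefore establish the quadratic form identity
\begin{align*}
\bigl\langle v\mathcal{G}_2 v^*\phi,\phi\bigr\rangle = -\frac{1}{2m}\bigl\langle G_0 v^*\phi, v^*\phi\bigr\rangle, \qquad \phi\in S_2L^2(\R^3),
\end{align*}
which exhibits the form as strictly negative unless $v^*\phi = 0$, and then rule out that degenerate case.

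The identity is a direct consequence of a short algebraic calculation. For $\phi\in S_2L^2$, Corollary~\ref{cor:S2 orth} yields $M_{uc}v^*\phi=0$, so $v^*\phi = M_{lc}v^*\phi$. I then compute $M_{lc}\mathcal{G}_2 M_{lc}$ using two elementary facts about the block structure of the Dirac matrices: $M_{lc}I_{uc}=0$ since $M_{uc}=I_{uc}$ and $M_{lc}$ project onto complementary blocks, and $M_{lc}\alpha_k M_{lc}=0$ for $k=1,2,3$ since each $\alpha_k$ is off-diagonal in the $2\times 2$ block decomposition. As $G_0$ and $G_2$ are scalar convolution operators that commute with the constant matrix $M_{lc}$, the only surviving contribution to $M_{lc}\mathcal{G}_2 M_{lc}$ comes from the $-\tfrac{1}{2m}G_0$ term, giving
\begin{align*}
M_{lc}\mathcal{G}_2 M_{lc} = -\frac{1}{2m}\,M_{lc}G_0 M_{lc},
\end{align*}
and sandwiching with $v^*\phi=M_{lc}v^*\phi$ on both sides yields the displayed identity.

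Positivity of the Newtonian potential, encoded by $\widehat{G_0 f}(\xi)=c|\xi|^{-2}\widehat f(\xi)$, shows $\langle v\mathcal{G}_2v^*\phi,\phi\rangle < 0$ unless $v^*\phi = 0$. If the degenerate case holds, Lemma~\ref{lem:esa2} supplies $\phi = Uv\psi$ with $\psi\in L^2(\R^3)$ a threshold eigenfunction of $H=D_m+V$, so $V\psi = v^*Uv\psi = v^*\phi = 0$ and therefore $(D_m-mI)\psi = -V\psi = 0$. Since $D_m$ has purely absolutely continuous spectrum, $\psi=0$, which forces $\phi = 0$. Thus the form is strictly negative on $S_2L^2\setminus\{0\}$, and $S_2v\mathcal{G}_2 v^*S_2$ is invertible on $S_2L^2$. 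The main obstacle is spotting the cancellation $M_{lc}\mathcal{G}_2 M_{lc}=-\tfrac{1}{2m}M_{lc}G_0 M_{lc}$: it hinges on the observation that the $G_2$ piece of $\mathcal{G}_2$ enters only through $(D_m+mI)G_2$, which is annihilated between the two sandwiching $M_{lc}$'s, leaving only the $-\tfrac{1}{2m}G_0$ correction that originates from the Taylor expansion of $\sqrt{z^2+m^2}$ in \eqref{resolventex}.
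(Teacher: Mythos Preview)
There is a genuine gap. You have conflated the integral operator $M_{uc}$ with the pointwise projection $I_{uc}$. In the paper, $M_{uc}$ is the operator whose \emph{integral kernel} is the constant matrix $\left(\begin{smallmatrix}I_{2\times2}&0\\0&0\end{smallmatrix}\right)$; thus $M_{uc}v^*\phi$ is the $\mathbb C^4$-vector $I_{uc}\int_{\R^3}(v^*\phi)(y)\,dy$. Corollary~\ref{cor:S2 orth} therefore only says that the \emph{integrals} of the upper two components of $v^*\phi$ vanish, not that those components vanish pointwise. Your step ``$v^*\phi=M_{lc}v^*\phi$'' is unjustified, and the sandwiching computation $M_{lc}\mathcal G_2 M_{lc}=-\tfrac1{2m}M_{lc}G_0M_{lc}$, while algebraically correct for the pointwise projections $I_{lc}$, does not apply here because $v^*\phi$ need not be supported in the lower two components.

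In fact the identity you claim, $\langle\mathcal G_2 v^*\phi,v^*\phi\rangle=-\tfrac1{2m}\langle G_0 v^*\phi,v^*\phi\rangle$, is not what holds on $S_2L^2$. The paper instead proves, via a Fourier-side computation of the kernel $K(\omega,\xi)$ and a limiting argument as $\omega\to0^+$, that $\langle\mathcal G_2 v^*\phi,v^*\phi\rangle=-\tfrac1{2m}\langle\mathcal G_0 v^*\phi,\mathcal G_0 v^*\phi\rangle=-\tfrac1{2m}\|\psi\|^2$ (see \eqref{G0 to G2 ident}). This is a genuinely different quantity: the right-hand side involves $\|\mathcal G_0 v^*\phi\|^2$, which on the Fourier side carries the full matrix $(\alpha\cdot\xi+2mI_{uc})|\xi|^{-2}$, not the scalar $|\xi|^{-2}$. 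The paper's argument exploits the orthogonality $M_{uc}v^*\phi=0$ only to kill the $\mathcal G_1$ term in the resolvent expansion \eqref{eq:R0exp_1}, allowing the limit \eqref{eq:verb} to be taken; the positivity then comes from computing the eigenvalues of the $4\times4$ symbol $K(0,\xi)$ directly and checking they are strictly positive for $\xi\neq0$. Your argument for the degenerate case $v^*\phi=0$ is fine, but it is never reached because the key quadratic-form identity fails.
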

\begin{proof}
Since $S_2v\mathcal{G}_2v^*S_2$ is a compact operator it is enough to show that its kernel is empty. 
Assume that for some $\phi \in S_2 L^2(\R^3)$, $S_2v\mathcal{G}_2v^*S_2\phi=0$, i.e., $ \la \mathcal{G}_2 v^*\phi , v^*\phi  \ra =0$. By Corollary~\ref{cor:S2 orth} $ \mathcal{G}_1 v^*\phi =0$. Using these equalities in \eqref{eq:R0exp_1}, under the decay condition on $|v(x)|$ 
\begin{align} \label{eq:verb}
0= \la \mathcal{G}_2 v^*\phi , v^*\phi  \ra = - \lim_{z \rightarrow 0} { \f 1 {z^2}}\la [ \mathcal{R}_0(\lambda) - \mathcal{G}_0] v^* \phi , v^*\phi  \ra 
\end{align} 
where $\lambda= \sqrt{z^2+m^2}$. The following equality holds for $z= i\omega$ and $ 0 < \omega \ll m$, 

$$\f 1 {z^2} \la [ \mathcal{R}_0(\lambda) - \mathcal{G}_0] v^* \phi , v^*\phi  \ra = \int_{\R^3} \big\la K(\omega,\xi) \widehat{v^*\phi (\xi)}, \widehat{v^*\phi (\xi)} \big\ra _{\C^4} d\xi \ , $$
where
\begin{multline*}
 K(\omega,\xi) =
   \frac{1}{\omega^2 |\xi|^2} \left(\begin{array}{cccc}
2m & 0 & \xi_3 & \bar{\eta}       \\
0 & 2m & \eta &  -\xi_3      \\
\xi_3 & \bar{\eta}  & 0 & 0      \\
\eta &  -\xi_3 & 0 & 0
\end{array}\right)\\
 -\frac{1}{\omega^2(\omega^2+|\xi|^2)}
              	\left(\begin{array}{cccc}
m+\sqrt{m^2-\omega^2} & 0 & \xi_3 & \bar{\eta}      \\
0 & m+\sqrt{m^2-\omega^2}  & \eta & -\xi_3       \\
 \xi_3 &  \bar{\eta} & \sqrt{m^2-\omega^2}-m &0       \\
 \eta & -\xi_3 & 0 & \sqrt{m^2-\omega^2}-m
\end{array}\right).
 \end{multline*}
Here $\xi=( \xi_1, \xi_2, \xi_3)$ and $\eta = \xi_2 + i\xi_1$. 

Let $\tau:=\frac{|\xi|^2}{\omega^2}(m-\sqrt{m^2-\omega^2})$, then $ K(\omega,\xi)$ can be written as 
\begin{align}
 \frac{1}{|\xi|^2(\omega^2+|\xi|^2)}
  \left(\begin{array}{cccc}
2m+\tau &0&\xi_3&\eta   \\
0 & 2m+\tau & \eta &  -\xi_3        \\
\xi_3 & \bar{\eta} & \tau & 0        \\
\eta &  -\xi_3  & 0 &\tau
\end{array}\right).
\end{align} 
The eigenvalues of $K(\omega,\xi)$ are
 \begin{align*}
 &\lambda_{1,2}=\frac{m+\tau+\sqrt{m^2+|\xi|^2}}{|\xi|^2(\omega^2+|\xi|^2)},  \qquad \lambda_{3,4}= \frac{m+\tau-\sqrt{m^2+|\xi|^2}}{|\xi|^2(\omega^2+|\xi|^2)}. 
 \end{align*} 
 
Note that the eigenvalues are real and  for any $\xi\neq0$ they are positive. Hence, $ K(\omega,\xi)$ self-adjoint and positive definite for any $\xi\neq0$. One can also check that the eigenvalues are nonincreasing functions of $\omega \in (0,m)$. Hence, we can use monotone convergence theorem and take the limit into the integral \eqref{eq:verb} to obtain
$$ 0= \lim_{\omega\rightarrow 0^+}\int_{\R^3} \la K(\omega,\xi)\hat{v^*\phi (\xi)} , \hat{v^*\phi (\xi)} \ra_{\C^4} d\xi \ =\int_{\R^3} \la K(0,\xi)\hat{v^*\phi (\xi)} , \hat{v^*\phi (\xi)} \ra_{\C^4} d\xi$$
where
\begin{align*}
K(0,\xi)= \frac{1}{|\xi|^4} \left(\begin{array}{cccc}
2m+\frac{|\xi|^2}{2m} & 0 & \xi_3 & \bar{\eta}      \\
0 & 2m+\frac{|\xi|^2}{2m} &  \eta& -\xi_3   \\
\xi_3 & \bar{\eta} & \frac{|\xi|^2}{2m} & 0       \\
 \eta &  -\xi_3 & 0 & \frac{|\xi|^2}{2m}
\end{array}\right).
   \end{align*}
Note that this matrix is also self-adjoint and positive definite. Therefore, $\hat{v^*\phi (\xi)}=0$. Since $v^*\phi (\xi)$ has $L^1$ entries, $v^*\phi  =0$. Recall that the fact that $\phi  \in S_1 L^2(\R^3) $ implies that $\phi = Uv^*\psi$  for $\psi= - \mathcal{G}_0v^*\phi $. Hence, we conclude that $\phi =0$.
\end{proof}  
In addition, one can see that 
\begin{align*}
K(0,\xi)= \frac{1}{2m}  \frac{1}{|\xi|^4} \left(\begin{array}{cccc}
2m & 0 & \xi_3 & \bar{\eta}      \\
0 & 2m &  \eta& -\xi_3   \\
\xi_3 & \bar{\eta} & 0 & 0       \\
 \eta &  -\xi_3 & 0 & 0
\end{array}\right)^2.
   \end{align*}
Therefore, for any $\phi \in S_2L^2$ we have 
\begin{align}\label{G0 to G2 ident} 
\la \mathcal{G}_2 v^*\phi  , v^*\phi  \ra =-\frac{1}{2m} \la  \mathcal{G}_0 v^*\phi , \mathcal{G}_0 v^*\phi  \ra.
\end{align}
\begin{lemma}\label{lem:eproj}
The operator $P_m = -\frac{1}{2m}\mathcal{G}_0V\mathcal{G}_0v^*S_2D_3S_2v\mathcal{G}_0V\mathcal{G}_0$ equals the finite rank, orthogonal projection in $ L^2(\R^3) $ onto the eigenspace of $H = D_m + V$ at threshold $m$.  
\end{lemma}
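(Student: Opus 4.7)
The strategy is to simplify $P_m$ dramatically using the structure of threshold eigenfunctions, then verify self-adjointness, the range inclusion $\text{Ran}(P_m)\subseteq \cE_m:=\ker(H-m)\cap L^2(\R^3)$, and that $P_m$ acts as the identity on $\cE_m$. The key simplification is that $\mathcal{G}_0V$ acts as $-I$ on the range of $\mathcal{G}_0v^*S_2$. Indeed, for $\phi\in S_2L^2$, Lemmas~\ref{lem:esa1}--\ref{lem:esa2} give $\phi=Uv\psi$ with $\psi\in\cE_m$ and $\mathcal{G}_0v^*\phi=-\psi$. Using $V\psi=-(D_m-m)\psi$, together with the factorization $(D_m-m)(D_m+m)=-\Delta$ and $-\Delta G_0=I$, one computes
\begin{equation*}
  \mathcal{G}_0V\mathcal{G}_0v^*\phi = -\mathcal{G}_0V\psi = \mathcal{G}_0(D_m-m)\psi = (D_m+m)G_0(D_m-m)\psi = \psi = -\mathcal{G}_0v^*\phi.
\end{equation*}
Hence $\mathcal{G}_0V\mathcal{G}_0v^*S_2=-\mathcal{G}_0v^*S_2$, and taking adjoints (using the self-adjointness of $\mathcal{G}_0$, $V$, and $S_2$) also $S_2v\mathcal{G}_0V\mathcal{G}_0=-S_2v\mathcal{G}_0$. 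Substituting both identities into the stated formula collapses it to the much cleaner expression
\begin{equation*}
   P_m = -\frac{1}{2m}\mathcal{G}_0v^*S_2D_3S_2v\mathcal{G}_0.
\end{equation*}

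Self-adjointness of $P_m$ is now immediate from the self-adjointness of $\mathcal{G}_0$ and of $S_2D_3S_2$ (inherited from $S_2v\mathcal{G}_2v^*S_2$). The range inclusion $\text{Ran}(P_m)\subseteq\cE_m$ follows by writing, for any $f\in L^2$, $\phi_f:=-\frac{1}{2m}D_3S_2v\mathcal{G}_0f\in S_2L^2$, and noting $P_m f=\mathcal{G}_0v^*\phi_f=-\Psi(\phi_f)\in\cE_m$, where $\Psi:S_2L^2\to\cE_m$ is the bijection $\Psi(\phi):=-\mathcal{G}_0v^*\phi=\psi_\phi$ established in Lemmas~\ref{lem:esa1}--\ref{lem:esa2}. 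Finiteness of rank is inherited from $\dim S_2L^2<\infty$.

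The main step is verifying $P_m\psi=\psi$ for $\psi\in\cE_m$. Here I will use two auxiliary identities. First, polarizing \eqref{G0 to G2 ident} and using $\mathcal{G}_0v^*\phi=-\Psi\phi$ yields
\begin{equation*}
  S_2v\mathcal{G}_2v^*S_2 = -\tfrac{1}{2m}\Psi^*\Psi \qquad\text{on }S_2L^2,\qquad\text{so}\qquad D_3=-2m(\Psi^*\Psi)^{-1}.
\end{equation*}
Second, for $\psi\in\cE_m$ and any $\phi\in S_2L^2$, $\langle S_2v\mathcal{G}_0\psi,\phi\rangle=\langle\psi,\mathcal{G}_0v^*\phi\rangle=-\langle\psi,\Psi\phi\rangle$, which shows $S_2v\mathcal{G}_0\psi=-\Psi^*\psi$. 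Substituting these into the simplified $P_m$ gives, for $\psi\in\cE_m$,
\begin{equation*}
  P_m\psi = -\tfrac{1}{2m}\mathcal{G}_0v^*\cdot(-2m)(\Psi^*\Psi)^{-1}\cdot(-\Psi^*\psi) = -\mathcal{G}_0v^*(\Psi^*\Psi)^{-1}\Psi^*\psi = \Psi(\Psi^*\Psi)^{-1}\Psi^*\psi,
\end{equation*}
where the last equality applies $\mathcal{G}_0v^*\widetilde\phi=-\Psi\widetilde\phi$ with $\widetilde\phi=(\Psi^*\Psi)^{-1}\Psi^*\psi\in S_2L^2$. Since $\Psi:S_2L^2\to\cE_m$ is a bijection, $\Psi(\Psi^*\Psi)^{-1}\Psi^*$ is the identity on $\cE_m$, so $P_m\psi=\psi$.

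Combined with self-adjointness and $\text{Ran}(P_m)\subseteq\cE_m$, this forces $P_m$ to annihilate $\cE_m^\perp$ (via the adjoint pairing $\langle P_mf,g\rangle=\langle f,P_mg\rangle=0$ whenever $f\perp\cE_m$), and hence $P_m$ is the orthogonal projection onto $\cE_m$. The principal obstacle is the very first step: correctly identifying the cancellation $\mathcal{G}_0V\mathcal{G}_0v^*\phi=-\mathcal{G}_0v^*\phi$ on $S_2L^2$, because without it the fourfold product $\mathcal{G}_0V\mathcal{G}_0v^*\cdots v\mathcal{G}_0V\mathcal{G}_0$ appears much too singular to match an orthogonal projection; the minus signs and the explicit factor $-\frac{1}{2m}$ in front all have to line up precisely with the normalization coming from $\Psi^*\Psi$ in \eqref{G0 to G2 ident}.
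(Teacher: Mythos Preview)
Your proof is correct and follows essentially the same approach as the paper's. Both arguments first collapse $P_m$ to $-\tfrac{1}{2m}\mathcal{G}_0v^*S_2D_3S_2v\mathcal{G}_0$ via the cancellation $\mathcal{G}_0V\mathcal{G}_0v^*S_2=-\mathcal{G}_0v^*S_2$, then use the polarized form of \eqref{G0 to G2 ident} to identify $D_3$ with (a multiple of) the inverse Gram operator of the eigenfunctions; the paper carries this out in coordinates with an orthonormal basis $\{\phi_j\}$ of $S_2L^2$ and a matrix computation $A_{j_0,i}B_{j,i}A_{j_0,j}$, whereas you phrase the same computation basis-free as $\Psi(\Psi^*\Psi)^{-1}\Psi^*$. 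One small remark: your justification of the first cancellation via $(D_m+m)G_0(D_m-m)\psi=\psi$ works because $(D_m-m)\psi=-V\psi$ has enough decay, but the paper's route through $S_2=-S_2v\mathcal{G}_0v^*U$ (coming directly from $A_0\phi=0$) avoids that detour.
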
   
\begin{proof} 
First recall that $S_2 \leq S_1 $ is finite dimensional. 
 Using \eqref{S1G0}  we have $S_2= -S_2v\mathcal{G}_0v^*U$ and consequently
\be\label{eq:G0S2}
S_2v\mathcal{G}_0V\mathcal{G}_0 = S_2v\mathcal{G}_0v^*U v \mathcal{G}_0 = - S_2v\mathcal{G}_0.
\ee
Similarly, $ \mathcal{G}_0V\mathcal{G}_0v^*S_2 = -\mathcal{G}_0v^*S_2$. Therefore, $P_m = -\frac{1}{2m}\mathcal{G}_0v^*S_2D_3S_2v\mathcal{G}_0$.
 
Let $\{\phi_j\}_{j=1}^N$ be an orthonormal basis for the $S_2  L^2(\R^3)$, the range of $S_2$.  Then, by Lemma~\ref{lem:esa1}, we have
\begin{align}\label{psi j eqn}
	\phi_j=Uv\psi_j,\,\,\,\,\psi_j=-\mathcal G_0v^*\phi_j, \qquad 1\leq j\leq N,
\end{align}
where $\psi_j\in L^2$, $j=1,2,\ldots, N$, are eigenvectors. This implies that he range of $P_m$ is contained in the span of $\{\psi_j \}_{j=1}^N$.

Since  $\{\phi_j\}_{j=1}^N$ is linearly independent, we have that $\{\psi_j\}_{j=1}^N$ is  linearly independent, and hence it is a basis for $m$ energy eigenspace.
Using the orthonormal basis for $S_2 L^2(\R^3) $, we have that for any $f\in L^2 $,
$S_2f=\sum_{j=1}^N \la f,\phi_j\ra \phi_j$. Therefore, we have
\begin{align}\label{S2 sum}
	S_2 v \mathcal G_0f=\sum_{j=1}^N \la f,\mathcal G_0v^*\phi_j\ra \phi_j=-\sum_{j=1}^N \la f,\psi_j\ra \phi_j.
\end{align}
We claim that, for each $i_0,j_0\in\{1,2,\ldots,N\},$
$$
\big\la \psi_{i_0}, P_m \psi_{j_0}\big\ra =\big\la \psi_{i_0}, \psi_{j_0}\big\ra.
$$
This implies the range of $P_m$ is equal to the span of $\{\psi_j \}_{j=1}^N$ and that $P_m$ is the identity operator in the range of $P_m$. Since $P_m$ is self-adjoint the assertion of the lemma holds.

Recall $D_3:=(S_2v\mathcal G_2v^*S_2)^{-1}$. Let $A=\{A_{ij}\}_{i,j=1}^N$, $B=\{B_{ij}\}_{i,j=1}^N$ be the matrix representations of $S_2v\mathcal G_2v^*S_2$ and $D_3$ with respect to the
orthonormal basis $\{\phi_j\}_{j=1}^N$ of $S_2$.
Using \eqref{G0 to G2 ident} and polarization,
\begin{align*}
	A_{ij}&=\la \phi_j,S_2v\mathcal G_2v^*S_2\phi_i\ra=-\frac{1}{2m} \la \mathcal G_0v^*\phi_j,\mathcal G_0v^*\phi_i\ra  = - \frac{1}{2m} \la \psi_j,\psi_i\ra,\\
	B_{ij}&= A^{-1}_{ij}=\la \phi_j,D_3 \phi_i\ra.
\end{align*}
Using this and \eqref{S2 sum},  we have 
\begin{multline*}
\big\la \psi_{i_0}, P_m \psi_{j_0}\big\ra= -\frac1{2m}\Big\la S_2v\mathcal G_0  \psi_{i_0},D_3 S_2v\mathcal G_0 \psi_{j_0} \Big\ra \\ =
-\frac1{2m}
\Big\la \sum_{i=1}^N \la \psi_{i_0},\psi_i\ra \phi_i, D_3 \sum_{j=1}^N \la \psi_{j_0},\psi_j\ra \phi_j \Big\ra 
=-\frac1{2m}\sum_{i,j=1}^N \la \psi_{i_0},\psi_i\ra \la  \psi_j, \psi_{j_0} \ra \big\la \phi_i,D_3  \phi_j \big\ra\\ = -2m \sum_{i,j=1}^N  A_{i,i_0} B_{j,i} A_{j_0,j} = -2m A_{j_0,i_0}=   \la \psi_{i_0},\psi_{j_0}\ra.
\end{multline*}
This finishes the proof of the claim and the lemma.
\end{proof}

\end{document}